\definecolor{darkblue}{rgb}{0, 0, .6}
\definecolor{grey}{rgb}{.8, .8, .8}
\providecommand{\tabularnewline}{\\}
\numberwithin{equation}{section}
\numberwithin{figure}{section}
\numberwithin{table}{section}
\theoremstyle{plain}
\newtheorem{thm}{\protect\theoremname}[section]
\theoremstyle{definition}
\newtheorem{example}[thm]{\protect\examplename}
\theoremstyle{plain}
\newtheorem{prop}[thm]{\protect\propositionname}
\theoremstyle{plain}
\theoremstyle{remark}
\newtheorem{rem}[thm]{\protect\remarkname}
\theoremstyle{plain}
\newtheorem{corollary}[thm]{\protect\corollaryname}
\newtheoremstyle{plain}
  {\topsep}   % ABOVESPACE
  {\topsep plus 20pt minus 5pt}   % BELOWSPACE
  {}  % BODYFONT
  {0pt}       % INDENT (empty value is the same as 0pt)
  {\bfseries} % HEADFONT
  {.}         % HEADPUNCT
  {5pt plus 1pt minus 1pt} % HEADSPACE
  {}          % CUSTOM-HEAD-SPEC
\setlist{itemsep=0pt,topsep=0pt,parsep=1pt,partopsep=0pt}
\renewcommand*\env@cases[1][1]{%
  \let\@ifnextchar\new@ifnextchar
  \left\lbrace
  \def\arraystretch{#1}%
  \array{@{}l@{\quad}l@{}}%
}
\providecommand{\conjecturename}{Conjecture}
\providecommand{\examplename}{Example}
\providecommand{\propositionname}{Proposition}
\providecommand{\remarkname}{Remark}
\providecommand{\theoremname}{Theorem}
\providecommand{\corollaryname}{Corollary}
\global\long\def\GEN{\text{GEN}}%
\global\long\def\DNG{\text{DNG}}%
\newcommand{\cupdot}{\mathop{\dot{\cup}}}
\newcommand{\boxprod}{\mathop{\textstyle\mathsmaller{\square}}}
\DeclareMathOperator{\Wd}{Wd}
\global\long\def\Opt{\text{Opt}}%
\global\long\def\mex{\text{mex}}%
\global\long\def\nim{\text{nim}}%
\global\long\def\pty{\text{pty}}%
\global\long\def\type{\text{type}}%
\global\long\def\genmaxng{\mathcal{N}}%
\definecolor{rred}{rgb}{0.9, 0.17, 0.31}
\definecolor{darkred}{cmyk}{0,1,1,.3}
\definecolor{rose}{cmyk}{0,1.00,.20,0}
\definecolor{darkblue}{RGB}{0,0,255}
\definecolor{turq}{RGB}{72,209,204}
\definecolor{naublue}{cmyk}{1,.72,0,.32}
\definecolor{mediterranean}{cmyk}{.67,0,.08,.3}
\definecolor{butterfly}{cmyk}{.95,.59,0,.10}
\definecolor{icyblue}{cmyk}{.84,.25,0,.06}
\definecolor{orange2}{RGB}{255,100,0}
\definecolor{orange}{RGB}{255,102,0}
\definecolor{nectarine}{cmyk}{0,0.70,1.00,0}
\definecolor{purple}{RGB}{153,51,255}
\definecolor{darkorchid}{cmyk}{.6,.9,0,.05}
\definecolor{purple2}{RGB}{159,51,250}
\definecolor{gray}{RGB}{220,220,220}
\definecolor{manatee}{rgb}{0.59, 0.6, 0.67}
\definecolor{naugreen}{cmyk}{.43,0,.34,.38}
\definecolor{springgreen}{cmyk}{1.00,0,.70,.02}
\definecolor{ggreen}{RGB}{0,153,0}
\newcommand{\convexpath}[2]{
[   
    create hullnodes/.code={
        \global\edef\namelist{#1}
        \foreach [count=\counter] \nodename in \namelist {
            \global\edef\numberofnodes{\counter}
            \node at (\nodename) [draw=none,name=hullnode\counter] {};
        }
        \node at (hullnode\numberofnodes) [name=hullnode0,draw=none] {};
        \pgfmathtruncatemacro\lastnumber{\numberofnodes+1}
        \node at (hullnode1) [name=hullnode\lastnumber,draw=none] {};
    },
    create hullnodes
]
($(hullnode1)!#2!-90:(hullnode0)$)
\foreach [
    evaluate=\currentnode as \previousnode using \currentnode-1,
    evaluate=\currentnode as \nextnode using \currentnode+1
    ] \currentnode in {1,...,\numberofnodes} {
  let
    \p1 = ($(hullnode\currentnode)!#2!-90:(hullnode\previousnode)$),
    \p2 = ($(hullnode\currentnode)!#2!90:(hullnode\nextnode)$),
    \p3 = ($(\p1) - (hullnode\currentnode)$),
    \n1 = {atan2(\y3,\x3)},
    \p4 = ($(\p2) - (hullnode\currentnode)$),
    \n2 = {atan2(\y4,\x4)},
    \n{delta} = {-Mod(\n1-\n2,360)}
  in 
    {-- (\p1) arc[start angle=\n1, delta angle=\n{delta}, radius=#2] -- (\p2)}
}
-- cycle
}
\tikzset{
    my box/.style = {
        , line cap = round
        , line join = round
    }
}
\tikzstyle{vert} = [circle, draw, fill=grey!80,inner sep=0pt, minimum size=5mm]
\tikzstyle{small vert} = [circle, draw, fill=grey,inner sep=0pt, minimum size=3mm]
\tikzstyle{b} = [draw, very thick, black,-]
\tikzstyle{a} = [draw, black,-stealth]
\begin{document}

\title{Impartial geodetic building games on graphs}

\author{Bret J.~Benesh}
\address{
Department of Mathematics,
College of Saint Benedict and Saint John's University,
37 College Avenue South,
Saint Joseph, MN 56374-5011, USA
}
\email{bbenesh@csbsju.edu}

\author{Dana C.~Ernst}
\address{
Department of Mathematics and Statistics,
Northern Arizona University PO Box 5717,
Flagstaff, AZ 86011-5717, USA
}
\email{Dana.Ernst@nau.edu, Nandor.Sieben@nau.edu}

\author{Marie Meyer}
\address{
Department of Engineering, Computing, and Mathematical Sciences,
Lewis University,
1 University Pkwy,
Romeoville, IL 60446, USA
}
\email{mmeyer2@lewisu.edu}

\author{Sarah K.~Salmon}
\address{Department of Mathematics,
University of Colorado Boulder
Campus Box 395,
2300 Colorado Avenue,
Boulder, CO 80309, USA}
\email{Sarah.Salmon@colorado.edu}

\author{N\'andor Sieben}

\thanks{Date: \the\month/\the\day/\the\year}

\keywords{impartial hypergraph game, geodetic convex hull}

\subjclass[2010]{91A46, 52A01, 52B40}

%---------------%
\begin{abstract}
%---------------%
A subset of the vertex set of a graph is geodetically convex if it contains every vertex on any shortest path between two elements of the set. The convex hull of a set of vertices is the smallest convex set containing the set. 
We study variations of two games introduced by Buckley and Harary, where two players take turns selecting previously-unselected vertices of a graph until the convex hull of the jointly-selected vertices becomes too large. The last player to move is the winner. The achievement game ends when the convex hull contains every vertex. In the avoidance game, the convex hull is not allowed to contain every vertex.  We determine the nim-value of these games for several graph families.
\end{abstract}

\maketitle

%---------------%
\section{Introduction}
%---------------%

A \emph{geodesic} on a finite graph is a shortest path between two vertices. The \emph{geodetic closure} of a set $S$ of vertices is the collection of vertices contained on geodesics between pairs of vertices of $S$. Harary~\cite{HARARY1984323} and subsequently Buckley and Harary~\cite{BuckleyHarary86} introduced two impartial geodetic games on graphs. In both games, two players alternately take turns selecting previously-unselected vertices, and the geodetic closure of the jointly-selected vertices is computed after each turn.  
The first player who causes the geodetic closure to equal the entire vertex set wins their achievement game.  The first player who cannot select a vertex without causing the geodetic closure to equal the entire vertex set loses their avoidance game. The outcomes of both games were studied for some of the more familiar graphs, including cycles, complete graphs, wheel graphs, generalized wheel graphs (corrected in~\cite{Necascova}), complete bipartite graphs, hypercubes, and the Petersen graph in~\cite{BuckleyHarary86}. Haynes, Henning, and Tiller~\cite{HaynesHenningTiller} studied both the achievement and avoidance games for complete multipartite graphs, coronas, complete block graphs, and split graphs. Other geodetic games on graphs were studied in~\cite{BuckleyHarary85,FraenkelHarary89,Wang17}.

In this paper, we investigate a variation of the games studied in~\cite{BuckleyHarary86,HaynesHenningTiller} that uses convex hulls instead of geodetic closures.  Our variation is analogous to games played on groups studied in ~\cite{anderson.harary:achievement,  
BeneshErnstSiebenGENSpectrum,
BeneshErnstSiebenSymAlt,
BeneshErnstSiebenGeneralizedDihedral,
brandenburg:algebraicGames,
ErnstSieben}.
An intuitive description of the difference between these operators is that the convex hull iteratively applies the geodetic closure until the set stabilizes. See Figure~\ref{fig:sample-game} for an example of the achievement game. The convex hull yields a closure system, whereas the geodetic closure is not idempotent. 
We show that the convex hull and geodetic closure games are different on hypercube graphs, complete multipartite graphs, and generalized wheel graphs. The two variations of the games are the same on all other graph families we consider.

\begin{figure}
\begin{tabular}{ccccc}
\begin{tikzpicture}[scale=1.1,auto]
\filldraw[purple,opacity=0.3] (0,1) circle (7pt);
\node (1) at (0,0) [vert]  {\scriptsize $v_2$};
\node (2) at (0,1) [vert,fill=purple!60] {\scriptsize $v_1$};
\node (3) at (1,-.5) [vert] {\scriptsize $v_3$};
\node (4) at (1,.5) [vert] {\scriptsize $v_4$};
\node (5) at (1,1.5) [vert] {\scriptsize $v_5$};
\path [b] (1) to (3);
\path [b] (1) to (4);
\path [b] (1) to (5);
\path [b] (2) to (3);
\path [b] (2) to (4);
\path [b] (2) to (5);
\begin{pgfonlayer}{background}
\fill[white,opacity=0.3] \convexpath{1,2,5,3}{9pt};
\end{pgfonlayer}
\end{tikzpicture}
&
\raisebox{1.35cm}{
\begin{tikzpicture}[scale=1.1,auto]
\path [a] (0,0) to (.75,0);
\end{tikzpicture}
}
&
\begin{tikzpicture}[scale=1.1,auto]
\node (1) at (0,0) [vert]  {\scriptsize $v_2$};
\node (2) at (0,1) [vert,fill=purple!60] {\scriptsize $v_1$};
\node (3) at (1,-.5) [vert] {\scriptsize $v_3$};
\node (4) at (1,.5) [vert,fill=purple!60] {\scriptsize $v_4$};
\node (5) at (1,1.5) [vert] {\scriptsize $v_5$};
\path [b] (1) to (3);
\path [b] (1) to (4);
\path [b] (1) to (5);
\path [b] (2) to (3);
\path [b] (2) to (4);
\path [b] (2) to (5);
\path [b] (2) to (4);
\begin{pgfonlayer}{background}
\fill[purple,opacity=0.3] \convexpath{2,4}{9pt};
\end{pgfonlayer}
\begin{pgfonlayer}{background}
\fill[white,opacity=0.3] \convexpath{1,2,5,3}{9pt};
\end{pgfonlayer}
\end{tikzpicture}
&
\raisebox{1.35cm}{
\begin{tikzpicture}[scale=1.1,auto]
\path [a] (0,0) to (.75,0);
\end{tikzpicture}
}
&
\begin{tikzpicture}[scale=1.1,auto]
\node (1) at (0,0) [vert]  {\scriptsize $v_2$};
\node (2) at (0,1) [vert,fill=purple!60] {\scriptsize $v_1$};
\node (3) at (1,-.5) [vert,fill=purple!60] {\scriptsize $v_3$};
\node (4) at (1,.5) [vert,fill=purple!60] {\scriptsize $v_4$};
\node (5) at (1,1.5) [vert] {\scriptsize $v_5$};
\path [b,dashed] (1) to (3);
\path [b,dashed] (1) to (4);
\path [b,densely dotted] (1) to (5);
\path [b] (2) to (3);
\path [b] (2) to (4);
\path [b,densely dotted] (2) to (5);
\begin{pgfonlayer}{background}
\fill[purple,opacity=0.3] \convexpath{1,2,5,3}{9pt};
\end{pgfonlayer}
\end{tikzpicture}
\end{tabular}
\caption{
\label{fig:sample-game} 
A sample play of the achievement game on $K_{2,3}$. The first player wins after the moves $v_1$, $v_4$, and $v_3$ indicated by dark purple vertices. The light purple cloud indicates the convex hull of the set of moves. After $v_3$ is selected, $v_2$ must be in the convex hull because it lies along the dashed geodesic  between $v_3$ and $v_4$. As a consequence, $v_5$ is also in the convex hull since it lies along the dotted geodesic between $v_1$ and $v_2$.}
\end{figure}

Whereas~\cite{BuckleyHarary85,BuckleyHarary86,FraenkelHarary89,HaynesHenningTiller} only determine the outcome of their variants, we compute the nim-values, and hence the outcomes, of our games. One consequence of our work is that we have simultaneously determined the nim-values for the geodetic closure variants in instances where the convex hull games and geodetic closure games are the same.

To determine nim-values for our games, we will often follow the approach introduced in~\cite{ErnstSieben} and generalized in~\cite{SiebenHypergraph}, which use structure diagrams for studying the nim-values. Loosely speaking, a structure diagram is a quotient of the game by an equivalence relation called structure equivalence, which respects the nim-values of the positions of the game and drastically simplifies calculation of nim-values. 

The structure of the paper is as follows.  We start with some preliminaries on impartial games and geodetic convexity, and we then define our games.  We recall the necessary structure theory and present results about the effect of graph operations such as unions and products on structure equivalence.  Next, we compute the nim-values for the games on graphs with a unique minimal generating set which includes complete split graphs, corona graphs, and block graphs.  We then compute the nim-values for the games on cycle graphs, hypercube graphs, grid graphs, complete multipartite graphs, wheel graphs, and generalized wheel graphs.  The paper concludes with a summary of our results and some open questions.

%---------------%
\section{Preliminaries\label{sec:Preliminaries}}
%---------------%

\subsection{Notation}
If $f:X\to Y$ and $A\subseteq X$, then we often use the standard
$f(A):=\{f(a)\mid a\in A\}$ notation for the image of $A$. 
As a special case, for a family $\mathcal{A}$ of subsets of $X$ we define
\[
\complement(\mathcal{A}):=\{A^c\mid A\in\mathcal{A}\},
\]
where $A^c=X\setminus A$ is the complement of $A$.
We define the \emph{parity} of the integer $k$ as $\pty(k):=k\!\mod2$. The cardinality of a set $A$ is denoted by $|A|$, and we write $\pty(A):=\pty(|A|)$ for the \emph{parity of a set}. We use the notation $\equiv_m$ to denote congruence modulo $m$.

\subsection{Graphs}
Our general reference for basic graph theory is \cite{PelayoBook}.
A \emph{graph} is an ordered pair $G=(V,E)$, where $V$ is a finite nonempty set of vertices and $E\subseteq {\binom{V}{2}}$ is the set of \emph{edges}, consisting of 2-element subsets of $V$. Two vertices in an edge are called \emph{adjacent}. A sequence $v_0,v_1,\ldots, v_k$ of distinct vertices is called a \emph{path} of \emph{length} $k$ between $v_0$ and $v_k$ if all consecutive vertices are adjacent.

A \emph{digraph} is an ordered pair $G=(V,E)$, where $V$ is a finite nonempty set of vertices and $E\subseteq V\times V$ is the set of \emph{directed edges}.

An \emph{induced subgraph} of a graph $G$ is the graph formed from a subset $S\subseteq V$ and all of the edges of $G$ connecting vertices in $S$. An induced subdigraph is defined analogously.

%---------------%
\subsection{Impartial games}
%---------------%

We recall the basic terminology of impartial combinatorial games. Our general references for the subject are \cite{albert2007lessons,SiegelBook}.

A \emph{finite impartial game} $\mathsf{G}$ is a finite digraph with a unique source but no infinite directed walks. Each vertex is called a \emph{position} while the unique source is referred to as the \emph{starting position}. The elements of the set $\Opt(P)$ of out-neighbors of a position $P$ are called the \emph{options} of $P$. A position $P$ is called \emph{terminal} if $\Opt(P)=\emptyset$. We say that $Q$ is a \emph{subposition} of $P$ if there is a directed walk from $P$ to $Q$.

During a \emph{play} of the game $\mathsf{G}$, two players take turns replacing the current position with one of its options. At the beginning of the play the current position is the starting position. The game ends when the current position becomes a terminal position. The last player to move is the winner of that play. Thus, a play is essentially a directed walk from the starting position to one of the terminal positions.

One can verify that each position of $\mathsf{G}$ is a subposition of the starting position. Since $\mathsf{G}$ has no infinite walks, no position is a proper subposition of itself. Thus, a play is always finite.

Each position $P$ of a game $\mathsf{G}$ determines a game $\mathsf{G}_P$ which is the subdigraph of $\mathsf{G}$ induced by the subpositions of $P$. If $P$ is the initial position, then $\mathsf{G}_P$ is of course $\mathsf{G}$. Accordingly, we define $\Opt(\mathsf{G}_P):=\{\mathsf{G}_Q \mid Q\in\Opt(P)\}$. 
We can think of a play as a process where the players replace the game with a smaller subgame in each turn.

The \emph{minimum excludant} $\mex(A)$ of a set $A$ of nonnegative
integers is the smallest nonnegative integer that is not in $A$.
The \emph{nim-value} (or \emph{Grundy value}) $\nim(P)$ of a position $P$ of a game is defined
recursively as the minimum excludant of the nim-values of the options
of $P$. That is,
\[
\nim(P):=\mex(\nim(\Opt(P))).
\]
Note that a terminal position $P$ has nim-value $\nim(P)=\mex(\nim(\emptyset))=\mex(\emptyset)=0$. The nim-value of the game is the nim-value of the starting position.

A position $P$ is \emph{losing} for the player about to move  if $\nim(P)=0$ and \emph{winning} otherwise. In particular, the second player has a winning strategy on a game $\mathsf{G}$ if and only if $\nim(\mathsf{G})=0$. The winning strategy is to always move to an option with nim-value $0$ since this places the opponent into a losing position.  The nim-value is a central object of interest for impartial games.  In particular, the nim-value determines the outcome of the game.  

The (disjunctive) \emph{sum} $\mathsf{G}+\mathsf{H}$ of the games $\mathsf{G}$ and $\mathsf{H}$ is the product digraph $\mathsf{G}\boxprod\mathsf{H}$, formally defined in Subsection~\ref{subsection:productgraphs}. This means that in each turn a player makes a valid move either in game $\mathsf{G}$ or in game $\mathsf{H}$ and the game sum ends when both games end. The nim-value of the sum of two games can be computed as the \emph{nim-sum} 
\[
\nim(\mathsf{G}+\mathsf{H})=\nim(\mathsf{G})\oplus\nim(\mathsf{H}),
\]
which requires binary addition without carry. A consequence of this is that $\nim(\mathsf{G}+\mathsf{H})=0$ if and only if $\nim(\mathsf{G})=\nim(\mathsf{H})$.

The \emph{nimber} $*n$ is the game satisfying $\Opt(*n)=\{*0,*1,\ldots,*(n-1)\}$.
The only terminal position of this game is the position $*0$. Induction
shows that $\nim(*n)=n$.

We say that positions $P$ and $Q$ of $\mathsf{G}$ are \emph{isomorphic} if $\mathsf{G}_P$ and $\mathsf{G}_Q$ are isomorphic. The following result is folklore, and we provide a proof for easy reference.

\begin{prop}\label{equivalent}
If all the options of a position $P$ are isomorphic, then $\nim(P)\in\{0,1\}$.
\end{prop}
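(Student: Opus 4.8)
The plan is to prove this by strong induction on the size of the game $\mathsf{G}_P$, while simultaneously tracking a sharper invariant than the statement literally requests. The key observation is that isomorphic positions have equal nim-numbers: if $Q$ and $Q'$ are isomorphic, then $\mathsf{G}_Q$ and $\mathsf{G}_{Q'}$ are isomorphic as digraphs, and since the nim-number is computed recursively from the digraph structure via the $\mex$ operation, we get $\nim(Q)=\nim(Q')$. I would establish this as the first step, ideally by an auxiliary induction: a digraph isomorphism between $\mathsf{G}_Q$ and $\mathsf{G}_{Q'}$ restricts to a bijection between $\Opt(Q)$ and $\Opt(Q')$ that respects the induced subgames, so by the inductive hypothesis the option nim-numbers agree as sets, and thus their $\mex$ values agree.

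With that in hand, the main argument is short. Suppose all options of $P$ are isomorphic to one another. By the first step, every option has the same nim-number, call it $n$. Then the set $\nim(\Opt(P))$ is either empty (if $P$ is terminal) or the singleton $\{n\}$. If it is empty, then $\nim(P)=\mex(\emptyset)=0$. If it is the singleton $\{n\}$, then $\nim(P)=\mex(\{n\})$, which equals $0$ when $n\neq 0$ and equals $1$ when $n=0$. In either case $\nim(P)\in\{0,1\}$, which is exactly the claim.

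The only real subtlety is the first step, establishing that isomorphic positions share a nim-number; once that is granted, the $\mex$-of-a-singleton computation is immediate. I would present the isomorphism-invariance carefully because the statement ``all options are isomorphic'' refers to pairwise isomorphism of the option subgames, and one must confirm this forces numerical equality of their nim-values rather than merely some structural sameness. The finiteness hypothesis on $\mathsf{G}$ (no infinite directed walks) is what guarantees the recursion terminates and legitimizes the induction, so I would note that the inductive measure is well-founded precisely because no position is a proper subposition of itself. I do not expect any genuine obstacle here; the result is folklore, as the authors note, and the proof is a clean two-line $\mex$ computation resting on the routine fact that the nim-number is an isomorphism invariant.
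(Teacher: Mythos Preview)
Your proposal is correct and follows essentially the same approach as the paper: both arguments reduce to the observation that isomorphic options share a common nim-number $k$, whence $\nim(P)=\mex\{k\}\in\{0,1\}$. You are simply more explicit than the paper in justifying the isomorphism-invariance of nim-numbers and in handling the vacuous terminal case.
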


\begin{proof}
Every option of $P$ must have the same nim-value, say $k$. So $\nim(\Opt(P))=\{\nim(Q)\mid Q\in\Opt(P)\}=\{k\}$. Thus 
\[
\nim(P)=\mex(\{k\})=\begin{cases}
0, & k>0\\
1, & k=0.
\end{cases}
\]
\end{proof}

Following~\cite{BuckleyHarary86,FraenkelHarary89}, a move in a position where every option is isomorphic is jokingly called \emph{shrewd}. Note that a shrewd move cannot affect the outcome of the game. %\note{FraenkelHarary says this: "In [2], a shrewd move is (jokingly) defined as a move which always results in

\begin{prop} \label{prop:all-same general}
If the number of moves of every play has the same parity $r$, then $\nim(\mathsf{G})=r$.
\end{prop}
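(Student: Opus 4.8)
The plan is to prove the statement by well-founded induction on the subposition relation, which is legitimate precisely because $\mathsf{G}$ has no infinite directed walks. Rather than argue directly about the starting position, I would prove the more flexible local statement: for every position $P$ of $\mathsf{G}$, if all plays of the subgame $\mathsf{G}_P$ share a common length-parity $r_P$, then $\nim(P) = r_P$. Applying this to the starting position then yields the proposition, since by hypothesis every play of $\mathsf{G}$ has the same parity $r$.

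First I would dispose of the base case. If $P$ is terminal, then the only play of $\mathsf{G}_P$ is the trivial walk of length $0$, so $r_P = 0$; and indeed $\nim(P) = \mex(\emptyset) = 0$, matching $r_P$.

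For the inductive step, suppose $P$ is not terminal and fix an option $Q \in \Opt(P)$. Each play of $\mathsf{G}_Q$ extends, by prepending the edge $P \to Q$, to a play of $\mathsf{G}_P$ whose length is one greater; since all plays of $\mathsf{G}_P$ have parity $r_P$, every play of $\mathsf{G}_Q$ has parity $1 - r_P$. Thus $\mathsf{G}_Q$ satisfies the uniform-parity hypothesis with $r_Q = 1 - r_P$, and the induction hypothesis gives $\nim(Q) = 1 - r_P$. As this conclusion holds for \emph{every} option $Q$, all options of $P$ share the nim-number $1 - r_P \in \{0,1\}$, so $\nim(P) = \mex\{1 - r_P\}$; this equals $1$ when $r_P = 1$ and $0$ when $r_P = 0$, so $\nim(P) = r_P$ in either case.

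This argument is essentially a quantitative refinement of Proposition~\ref{equivalent}: that result already guarantees that the common nim-number of the options forces $\nim(P) \in \{0,1\}$, and the parity bookkeeping is exactly what pins down which of the two values occurs. I do not anticipate any serious obstacle; the only point requiring care is verifying that each individual option $Q$ inherits the uniform-parity hypothesis (so the induction hypothesis genuinely applies to it), which is the content of the length-shifting observation above.
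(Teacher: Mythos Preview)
Your proof is correct, but it proceeds quite differently from the paper's. The paper gives a two-line strategy argument: when $r=0$, every play has even length, so the second player wins by random play, giving $\nim(\mathsf{G})=0$; when $r=1$, the second player wins $\mathsf{G}+*1$ by random play (since adding the single move of $*1$ makes every play even), so $\nim(\mathsf{G})\oplus 1=0$ and hence $\nim(\mathsf{G})=1$. Your approach is a direct structural induction on positions via the $\mex$ definition, never invoking outcomes, game sums, or the nim-sum formula. The paper's argument is slicker but leans on the Sprague--Grundy machinery (the identity $\nim(\mathsf{G}+\mathsf{H})=\nim(\mathsf{G})\oplus\nim(\mathsf{H})$) already quoted in Section~\ref{sec:Preliminaries}; yours is more self-contained and in fact proves the stronger local statement that every position $P$ inheriting the uniform-parity hypothesis satisfies $\nim(P)=r_P$, which the paper's global argument does not directly yield.
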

\begin{proof}
If $r=0$, then the second player wins $\mathsf{G}$ by making arbitrary legal moves. If $r=1$, then the second player wins $\mathsf{G}+*1$ by making arbitrary legal moves. 
\end{proof}

%---------------%
\subsection{Geodetic convexity}
%---------------%

A comprehensive reference about geodetic convexity is \cite{PelayoBook}. A \emph{geodesic} of a graph $G=(V,E)$ is a path between two vertices with minimum length. 

\begin{example}
Consider the graph shown in Figure~\ref{fig:geodesic}. The paths $v_1,v_2,v_3,v_5$ and $v_1,v_2,v_4,v_5$ of length $3$ are the only geodesics between $v_1$ and $v_5$. The path $v_1,v_2,v_3,v_6,v_5$ is not a geodesic since it has a larger length of $4$. The path $v_6,v_5,v_4$, indicated by dashed blue, is also a geodesic connecting the vertices $v_6$ and $v_4$. 
\end{example}

\begin{figure}[h]
\centering
\begin{tabular}{cc}
\begin{tikzpicture}[scale=1.1,auto]
\node (1) at (0,0) [vert] {\scriptsize $v_1$};
\node (2) at (1,0) [vert] {\scriptsize $v_2$};
\node (3) at (2,.5) [vert] {\scriptsize $v_3$};
\node (4) at (2,-.5) [vert] {\scriptsize $v_4$};
\node (5) at (3,-.5) [vert] {\scriptsize $v_5$};
\node (6) at (3,.5) [vert] {\scriptsize $v_6$};
\path [b,orange,densely dotted] (1) to (2);
\path [b,orange,densely dotted] (2) to (3);
\path [b] (2) to (4);
\path [b,orange,densely dotted] (3) to (5);
\path [b] (3) to (6);
\path [densely dashed,b,blue] (4) to (5);
\path [densely dashed,b,blue] (5) to (6);
%\begin{pgfonlayer}{background}
%\fill[purple,opacity=0.3] \convexpath{1,2,5,3}{7pt};
%\end{pgfonlayer}
\end{tikzpicture}
\end{tabular}
\caption{A graph and two of its geodesics indicated by dotted orange and dashed blue.}
\label{fig:geodesic}
\end{figure}

The \emph{geodetic closure} $I[P]$ of a subset $P$ of $V$ consists of the vertices along the geodesics connecting two vertices in $P$. The geodetic closure is a pre-closure operator but not necessarily a closure operator because it may not be idempotent. To make it a closure operator, we need to iterate the geodetic closure function until the result stabilizes.

The subset $P$ is called \emph{geodetically convex}, or simply convex, if it contains all vertices along the geodesics connecting two vertices of $P$. The \emph{convex hull} $[P]:=\bigcap\{K\mid P\subseteq K, K \text{ is convex} \}$ of $P$ is the smallest convex set containing $P$. The convex hull function $P\mapsto[P]:2^V\to 2^V$ is a closure operator. In particular, $P$ is convex if and only if $[P]=P$. We say that a set $P$ of vertices is \emph{generating} if $[P]=V$. Otherwise, $P$ is called \emph{nongenerating}.

\begin{example}
Consider the complete bipartite graph $K_{2,3}$ depicted in Figure~\ref{fig:K23}. The set $P = \{v_3,v_4\}$ is generating but $I[P] = \{v_1,v_2,v_3,v_4\}$. 
\end{example}

\begin{figure}[h]
\centering
\begin{tabular}{cc}
\begin{tikzpicture}[scale=1,auto]
\node (1) at (0,0) [vert] {\scriptsize $v_1$};
\node (2) at (0,1) [vert] {\scriptsize $v_2$};
\node (3) at (1,-.5) [vert,fill=purple!60] {\scriptsize $v_3$};
\node (4) at (1,.5) [vert,fill=purple!60] {\scriptsize $v_4$};
\node (5) at (1,1.5) [vert] {\scriptsize $v_5$};
\path [b] (1) to (3);
\path [b] (1) to (4);
\path [b] (1) to (5);
\path [b] (2) to (3);
\path [b] (2) to (4);
\path [b] (2) to (5);
\begin{pgfonlayer}{background}
\fill[purple,opacity=0.3] \convexpath{1,2,5,3}{10pt};
\end{pgfonlayer}
\end{tikzpicture}
  \qquad & \qquad
\begin{tikzpicture}[scale=1,auto]
\node (1) at (0,0) [vert] {\scriptsize $v_1$};
\node (2) at (0,1) [vert] {\scriptsize $v_2$};
\node (3) at (1,-.5) [vert,fill=orange!60] {\scriptsize $v_3$};
\node (4) at (1,.5) [vert,fill=orange!60] {\scriptsize $v_4$};
\node (5) at (1,1.5) [vert] {\scriptsize $v_5$};
\path [b] (1) to (3);
\path [b] (1) to (4);
\path [b] (1) to (5);
\path [b] (2) to (3);
\path [b] (2) to (4);
\path [b] (2) to (5);
\begin{pgfonlayer}{background}
\fill[orange,opacity=0.3] \convexpath{1,2,4,3}{10pt};
\end{pgfonlayer}
\end{tikzpicture}
\\
$[P]$ \qquad & \qquad $I[P]$
\end{tabular}
\caption{Convex hull versus geodetic closure for a two-element subset.}
\label{fig:K23}
\end{figure}

The family of maximal nongenerating sets for a graph $G$ is denoted by $\mathcal{N}(G)$, or simply $\mathcal{N}$ if the context is clear, while the family of minimal generating sets is denoted by $\mathcal{G}(G)$ or $\mathcal{G}$. 

\begin{prop}
\label{prop:vertex is contained in max nongen}
Every vertex of a nontrivial graph is contained in some maximal nongenerating set.
\end{prop}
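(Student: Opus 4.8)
The plan is to show that any given vertex $v$ lies in some nongenerating set, and then to enlarge that set to a maximal one by exploiting the finiteness of $V$. The natural candidate for the initial nongenerating set is the singleton $\{v\}$ itself, so the argument splits into two parts: verifying that singletons are nongenerating, and then extending.

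First I would check that $\{v\}$ is nongenerating. A singleton is automatically convex, because the defining condition for convexity---containing every vertex on a geodesic between two elements of the set---is vacuous when the set has only one element, the only relevant geodesic being the trivial path from $v$ to itself. Hence $[\{v\}] = \{v\}$. Since the graph is nontrivial we have $|V| \geq 2$, so $\{v\} \neq V$, and therefore $\{v\}$ is nongenerating.

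Next I would perform the extension. Let $\mathcal{S}$ denote the collection of nongenerating subsets of $V$ that contain $v$. This collection is nonempty, as $\{v\} \in \mathcal{S}$, and it is finite because $V$ is finite. Choose $M \in \mathcal{S}$ of maximum cardinality. I claim that $M$ is a maximal nongenerating set. Indeed, if $N$ were a nongenerating set with $M \subsetneq N$, then $v \in M \subseteq N$ would place $N$ in $\mathcal{S}$ with $|N| > |M|$, contradicting the choice of $M$. Thus $M$ is maximal among all nongenerating sets and contains $v$, which is exactly what is required.

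The argument is essentially routine once the finiteness of the vertex set is invoked, so I do not anticipate a genuine obstacle. The single point that must be handled with care is the claim that a lone vertex is convex---that is, that the convex hull operator fixes singletons---since this is precisely what guarantees that $\{v\}$ is nongenerating and hence that $\mathcal{S}$ is nonempty in the first place.
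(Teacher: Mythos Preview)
Your proof is correct and follows essentially the same approach as the paper: show that the singleton $\{v\}$ is nongenerating because the graph is nontrivial, then use finiteness to extend to a maximal nongenerating set. The paper's version is terser, omitting the explicit verification that singletons are convex and the explicit maximum-cardinality construction, but the underlying argument is identical.
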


\begin{proof}
Let $v$ be a vertex. Since the graph is nontrivial, $\{v\}$ is nongenerating. Since the vertex set $V$ is generating, there must be a maximal nongenerating set $N$ such that $\{v\}\subseteq N\subset V$.
\end{proof}

\begin{example}
The trivial graph containing only a single vertex $v$ has no nontrivial nongenerating set. That is, $\mathcal{N}=\{\emptyset\}$. We also have $\mathcal{G}=\{\{v\}\}$.
\end{example}

A vertex is called \emph{simplicial} if the subgraph induced by the neighbors of the vertex is a complete graph. 

\begin{prop}
\label{prop:simplicial}
A generating set contains every simplicial vertex.
\end{prop}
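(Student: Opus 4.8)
The plan is to show that a simplicial vertex can never arise in the geodetic closure of a set that does not already contain it, and then to bootstrap this through the iteration defining the convex hull. Let $v$ be a simplicial vertex and let $P$ be a generating set, so that $[P]=V$; the goal is to conclude $v\in P$, and I will do so by proving the contrapositive statement that $v\notin S$ implies $v\notin[S]$.

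First I would establish the key geometric fact that a simplicial vertex is never an interior vertex of a geodesic. Suppose toward a contradiction that $v$ lies on some shortest path from $a$ to $b$ with $a\neq v\neq b$. Then $v$ has two distinct neighbors $u$ and $w$ on this path, one immediately on each side of $v$. Since $v$ is simplicial, the neighbors of $v$ induce a complete graph, so $u$ and $w$ are adjacent. Replacing the length-two subpath $u$-$v$-$w$ by the single edge $uw$ yields a strictly shorter walk from $a$ to $b$, contradicting the assumption that the original path was a geodesic. Hence $v$ is an endpoint of every geodesic on which it lies.

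Next I would translate this into a statement about the closure operators. Writing $I[S]=\bigcup_{x,y\in S}I[x,y]$, where $I[x,y]$ denotes the set of vertices on some geodesic between $x$ and $y$, the previous step shows that $v$ can belong to $I[x,y]$ only as one of the endpoints $x$ or $y$. Consequently $v\in I[S]$ forces $v\in S$; equivalently, if $v\notin S$ then $v\notin I[S]$. I would then iterate: the convex hull $[S]$ is obtained by repeatedly applying $I[\cdot]$ until the result stabilizes, so $[S]$ is the union of the ascending chain $S\subseteq I[S]\subseteq I[I[S]]\subseteq\cdots$. A routine induction, applying the previous implication at each step, shows that $v$ is absent from every term of this chain whenever $v\notin S$, and therefore $v\notin[S]$.

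Taking the contrapositive with $S=P$ finishes the argument: since $v\in V=[P]$, we must have $v\in P$. The heart of the proof is the geodesic-shortcut argument in the second step; everything afterward is bookkeeping. The one point deserving care is the endpoint case, namely confirming that $v$ appearing in $I[S]$ as an \emph{endpoint} of a geodesic genuinely requires $v\in S$, but this is immediate from the definition of $I[\cdot]$ as a union over pairs drawn from $S$.
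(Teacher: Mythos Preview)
Your proof is correct and follows essentially the same approach as the paper: both hinge on the geodesic-shortcut argument showing that a simplicial vertex cannot be an interior vertex of any geodesic, and then conclude that such a vertex cannot enter the convex hull of a set not already containing it. The only difference is cosmetic: you spell out the iteration of $I[\cdot]$ explicitly, whereas the paper passes directly from the geodesic observation to the convex-hull conclusion in one sentence.
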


\begin{proof}
For a contradiction, suppose a geodesic between vertices $v$ and $w$ contains a simplicial vertex $u$ not in $\{v,w\}$. Then the vertices on this geodesic immediately before and after $u$ are two different neighbors $r$ and $s$ of $u$. However, since $u$ is simplicial, $r$ and $s$ are adjacent. So replacing $r,u,s$ on the path with $r,s$ creates a shorter path between $v$ and $w$, contradicting the choice of the geodesic. This implies that a simplicial vertex $u$ is not contained in the convex hull of a set $S$ unless $u\in S$.  Therefore, a generating set must contain every simplical vertex.
\end{proof}

\begin{prop}
\label{prop:maximal is closed}
If $N\in\mathcal{N}$, then $N$ is convex. 
\end{prop}

\begin{proof}
The nongenerating set  $[N]$ contains $N$. Hence $[N]=N$ since $N$ is maximal nongenerating.
\end{proof}

We define the \emph{Frattini subset} as $\Phi(G):=\bigcap\mathcal{N}(G)$. If the context is clear, we write $\Phi$ in place of $\Phi(G)$. 

\begin{example}\label{ex:Frattini example}
Consider the cycle graph $C_4$ and the path graph $P_4$, which are depicted in Figure~\ref{fig:Frattini example}. The maximal nongenerating subsets of $C_4$ are $\{v_1,v_2\}$, $\{v_2,v_3\}$, $\{v_3,v_4\}$, and $\{v_1,v_4\}$. This implies that $\Phi(C_4)=\emptyset$.   On the other hand, $\Phi(P_4)=\{v_2,v_3\}$ is nonempty since the maximal nongenerating sets of $P_4$ are $\{v_1,v_2,v_3\}$ and $\{v_2,v_3,v_4\}$. Note that the simplicial vertices of $P_4$ form the unique minimal generating set $\{v_1,v_4\}$.
\end{example}

\begin{figure}[h]
\centering
\begin{tabular}{cc}
\begin{tabular}{c}
\begin{tikzpicture}[scale=1,auto]
\node (1) at (0,0) [vert] {\scriptsize $v_1$};
\node (2) at (1,0) [vert] {\scriptsize $v_2$};
\node (3) at (1,1) [vert] {\scriptsize $v_3$};
\node (4) at (0,1) [vert] {\scriptsize $v_4$};
\path [b] (1) to (2) to (3) to (4) to (1);
\fill[purple,opacity=0] \convexpath{1,3}{7pt};
\end{tikzpicture}\end{tabular}  \qquad & \qquad
\begin{tabular}{c}\begin{tikzpicture}[scale=1,auto]
\node (1) at (0,0) [vert] {\scriptsize $v_1$};
\node (2) at (1,0) [vert,fill=green] {\scriptsize $v_2$};
\node (3) at (2,0) [vert,fill=green] {\scriptsize $v_3$};
\node (4) at (3,0) [vert] {\scriptsize $v_4$};
\path [b] (1) to (2) to (3) to (4);
% \path [b] (1) to (3);
% \fill[green,opacity=0.3] \convexpath{1,3}{7pt};
\end{tikzpicture}\end{tabular}\\
$C_4$ \qquad & \qquad $P_4$
\end{tabular}

\caption{
\label{fig:Frattini example}
The graph $C_4$ with empty Frattini subset and the graph $P_4$ with nonempty Frattini subset highlighted in green.
}
\end{figure}

\begin{prop}
\label{prop:vertexTransitive}
The Frattini subset of a vertex transitive graph is empty.
\end{prop}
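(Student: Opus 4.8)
The plan is to exploit the fact that a vertex-transitive graph has enough symmetry to push any given vertex outside some maximal nongenerating set. First I would record the key structural fact that every graph automorphism respects the convex hull operator. Since an automorphism $\phi$ of $G$ preserves distances, it carries geodesics to geodesics, hence commutes with the geodetic closure $I[\cdot]$, and therefore also with its iterate, the convex hull: $[\phi(S)]=\phi([S])$ for every $S\subseteq V$. Consequently $\phi$ sends generating sets to generating sets and nongenerating sets to nongenerating sets, and since it is an inclusion-preserving bijection it permutes the family $\mathcal{N}(G)$ of maximal nongenerating sets.

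With this in hand, I would dispose of the trivial graph at once, where $\mathcal{N}=\{\emptyset\}$ gives $\Phi(G)=\emptyset$ immediately, and then treat a nontrivial vertex-transitive $G$. Fix an arbitrary vertex $v$. By Proposition~\ref{prop:vertex is contained in max nongen} the family $\mathcal{N}(G)$ is nonempty, so I may choose some $N\in\mathcal{N}(G)$. Because $N$ is nongenerating it is a proper subset of $V$, so there is a vertex $w\in V\setminus N$. Using vertex transitivity, I select an automorphism $\phi$ with $\phi(w)=v$.

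By the first step $\phi(N)\in\mathcal{N}(G)$, and since $w\notin N$ we obtain $v=\phi(w)\notin\phi(N)$. Thus $v$ lies outside at least one maximal nongenerating set, whence $v\notin\bigcap\mathcal{N}(G)=\Phi(G)$. As $v$ was arbitrary, $\Phi(G)=\emptyset$.

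The only genuine content is the opening step, namely verifying that automorphisms commute with the convex hull so that they act on $\mathcal{N}(G)$; everything afterward is a short transitivity argument. I expect the mild subtlety to be making the identity $[\phi(S)]=\phi([S])$ precise. This follows because both $\phi$ and $\phi^{-1}$ preserve the geodetic closure, so $\phi$ maps convex sets bijectively to convex sets, and hence carries the smallest convex superset of $S$ to the smallest convex superset of $\phi(S)$.
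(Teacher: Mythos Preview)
Your proof is correct and rests on the same idea as the paper's: automorphisms preserve the convex hull operator and hence permute $\mathcal{N}(G)$, so vertex transitivity forces $\Phi$ to be all of $V$ or empty. The paper phrases this as a two-line contradiction (if $\Phi\neq\emptyset$ then by invariance $\Phi=V$, which is absurd), whereas you unwind it into a direct argument exhibiting, for each vertex $v$, a maximal nongenerating set missing $v$; your version is more explicit about why automorphisms act on $\mathcal{N}$, but the content is the same.
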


\begin{proof}
For a contradiction, suppose that $\Phi$ is nonempty. Then vertex transitivity implies that $\Phi$ contains every vertex. This is a contradiction since the full vertex set is a generating set and so cannot be a subset of a maximal nongenerating set.
\end{proof}

A vertex $v$ is called a \emph{nongenerator} if for all subsets $S$ of vertices $[S]=V$ implies $[S \setminus \{v\}]=V$.   The next result provides a motivation for the name Frattini subset, since it is a special case of the Frattini--Neumann Intersection Theorem \cite{Schmidt}.  We provide a self-contained proof using our terminology.

\begin{prop}\label{useless-elements}
The set of nongenerators is $\Phi$.
\end{prop}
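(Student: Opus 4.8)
The plan is to prove the set equality $\{\text{nongenerators}\} = \Phi$ by double inclusion, using the fact (Proposition~\ref{prop:maximal is closed}) that every maximal nongenerating set is convex, together with the maximality characterization.

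First I would show that every nongenerator lies in $\Phi$, equivalently that any vertex $v \notin \Phi$ fails to be a nongenerator. If $v \notin \Phi = \bigcap\mathcal{N}$, then there is some maximal nongenerating set $N$ with $v \notin N$. I want to produce a generating set $S$ with $[S \setminus \{v\}] \ne V$, witnessing that $v$ is not a nongenerator. The natural candidate is $S := N \cup \{v\}$: since $N$ is \emph{maximal} nongenerating and $v \notin N$, the set $N \cup \{v\}$ must be generating, so $[N \cup \{v\}] = V$. But removing $v$ gives $[N] = N \ne V$ (using that $N$ is convex by Proposition~\ref{prop:maximal is closed}, and nongenerating). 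Hence $v$ is not a nongenerator, proving the contrapositive.

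For the reverse inclusion, I would show that every $v \in \Phi$ is a nongenerator. Suppose $v \in \Phi = \bigcap \mathcal{N}$ and let $S$ be any generating set, so $[S] = V$. I must show $[S \setminus \{v\}] = V$. Arguing by contradiction, suppose $[S \setminus \{v\}] \ne V$, i.e. $S \setminus \{v\}$ is nongenerating. Then $S \setminus \{v\}$ extends to some maximal nongenerating set $N \in \mathcal{N}$. Since $v \in \Phi \subseteq N$, we get $S \setminus \{v\} \subseteq N$ and $v \in N$, so $S \subseteq N$; but then $V = [S] \subseteq [N] = N$ (again invoking convexity of $N$), contradicting that $N$ is nongenerating. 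Therefore $[S \setminus \{v\}] = V$, so $v$ is a nongenerator.

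The only subtle point, and the step I expect to need the most care, is the extension argument in the second inclusion: I must know that any nongenerating set $S \setminus \{v\}$ is contained in \emph{some} maximal nongenerating set $N$. This is the analogue of Proposition~\ref{prop:vertex is contained in max nongen} applied to a set rather than a single vertex, and it holds because the family of nongenerating sets containing $S\setminus\{v\}$ is nonempty and finite (as $V$ is finite), hence has a maximal element, which is automatically a maximal nongenerating set in $\mathcal{N}$. Once this extension is in hand, both inclusions reduce to the convexity identity $[N] = N$ for $N \in \mathcal{N}$, so the proof is short and the main work is simply setting up the two contrapositive-style arguments correctly.
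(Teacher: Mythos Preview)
Your proof is correct. The first inclusion is essentially identical to the paper's argument, just framed as a contrapositive rather than a contradiction: both use the same witness $S=N\cup\{v\}$ for some $N\in\mathcal{N}$ not containing $v$, together with $[N]=N$.

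For the second inclusion, however, you take a genuinely different route. The paper argues constructively: given $v\in\Phi$ and a generating set $S$, it picks for each $N\in\mathcal{N}$ an element $s_N\in S\setminus N$, observes that $s_N\ne v$ since $v\in N$, and concludes that $\{s_N\mid N\in\mathcal{N}\}\subseteq S\setminus\{v\}$ is already generating because it is not contained in any maximal nongenerating set. Your argument instead proceeds by contradiction: assume $S\setminus\{v\}$ is nongenerating, extend it to some $N\in\mathcal{N}$, and use $v\in\Phi\subseteq N$ to get $S\subseteq N$, contradicting $[S]=V$. Your version is arguably cleaner and avoids the explicit choice of the $s_N$, at the cost of requiring the extension lemma (which you correctly justify via finiteness). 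The paper's version has the mild advantage of exhibiting an explicit generating subset of $S\setminus\{v\}$, but both are short and equally valid.
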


\begin{proof}
Let $v$ be a nongenerator and $N \in \genmaxng$. If $v \not\in N$, then $[N \cup \{v\}]=V$ by maximality of $N$.  Proposition~\ref{prop:maximal is closed} and $v$ being a nongenerator imply the contradiction $N=[N]=V$. So $v\in N$ for all $N \in \genmaxng$, and hence $v \in \Phi$.

Now let $v \in \Phi$ and let $S$ be a generating set. For each $N\in\mathcal{N}$, there exists $s_N\in S \setminus N$ since $N$ is nongenerating. Since $v \in \Phi \subseteq N$, we have $s_N \not=v$. Hence $[\{s_N \mid N \in \mathcal{N}\}]=V$ since $\{s_N \mid N \in \mathcal{N}\}$ is not contained in any maximal nongenerating set. Thus $[S \setminus \{v\}]\supseteq[\{s_N \mid N \in \genmaxng\}]=V$.
\end{proof}

%---------------%
\section{Building games}
%---------------%

For each of the games, we play on a graph $G$ with vertex set $V$ and edge set $E$. Two players take turns selecting previously-unselected vertices of $G$ until the set $P$ of jointly-selected elements satisfy certain conditions. We use the normal play condition, so a player who cannot move loses.

For the achievement game \emph{generate} $\GEN(G)$, the game ends as soon as $[P] = V$, so the player who first creates a generating set wins.  For the avoidance game \emph{do not generate} $\DNG(G)$, each position $P$ must satisfy $[P] \neq V$. The player who cannot select a vertex without creating a generating set loses. We refer to $\GEN(G)$ and $\DNG(G)$ as \emph{building games} as in~\cite{SiebenHypergraph}.

\begin{example}\label{ex:single vertex}
Let $G$ be the trivial graph with a single vertex $v$. The only position of $\DNG(G)$ is the empty set, so the nim-value of this game is $0$. The game for $\GEN(G)$ is $\emptyset\to\{v\}$, and so the nim-value of this game is $1$.
\end{example}

\begin{example}
Let $G$ be a graph with $V=\{u,v\}$, regardless of edge set. The games together with the corresponding nim-values of the positions are shown in Figure~\ref{fig:P2}. Note that $\mathcal{N}=\{\{u\},\{v\}\}$.
\end{example}

\begin{figure}[h]
\begin{tabular}{cc}
\begin{tabular}{c}
\begin{tikzpicture}
\node[label={[label distance=-4pt]left:{\scriptsize $\color{cyan}{*1}$}}] (0) at (1,3) {$\emptyset$};
\node[label={[label distance=-4pt]left:{\scriptsize $\color{cyan}{*0}$}}] (1) at (0,2) {$\{u\}$}; 
\node[label={[label distance=-4pt]right:{\scriptsize $\color{cyan}{*0}$}}] (2) at (2,2) {$\{v\}$}; 
\draw[a] (0) -- (1);
\draw[a] (0) -- (2);
\end{tikzpicture}
\tabularnewline
\end{tabular} & %
\begin{tabular}{c}
\begin{tikzpicture}
\node[label={[label distance=-4pt]left:{\scriptsize $\color{cyan}{*0}$}}] (0) at (1,3) {$\emptyset$};
\node[label={[label distance=-4pt]left:{\scriptsize $\color{cyan}{*1}$}}] (1) at (0,2) {$\{u\}$}; 
\node[label={[label distance=-4pt]right:{\scriptsize $\color{cyan}{*1}$}}] (2) at (2,2) {$\{v\}$}; 
\node[label={[label distance=-4pt]left:{\scriptsize $\color{cyan}{*0}$}}] (3) at (1,1) {$\{u,v\}$};
\draw[a] (0) -- (1);
\draw[a] (0) -- (2);
\draw[a] (1) -- (3);
\draw[a] (2) -- (3);
\end{tikzpicture}
\end{tabular}\\
$\DNG(G)$ & $\GEN(G)$
\end{tabular}

\caption{\label{fig:P2} 
Games and the corresponding nimbers for a graph $G$ with $V=\{u,v\}$.}
\end{figure}

\begin{example}
\label{ex:W5}
Consider the wheel graph $W_5$. Representative quotient digraphs of $\DNG(W_5)$ and $\GEN(W_5)$ are given in Figure~\ref{fig:DNGW5 and GENW5}. In this quotient, we identified geometrically congruent positions. In both cases, we have labeled positions with their corresponding nim-values. Any position that contains two antipodal noncentral vertices will generate $W_5$, which implies that such subsets must be avoided in $\DNG(W_5)$.
\end{example}

\begin{figure}[h]
\centering
\begin{tabular}{cc}
\begin{tikzpicture}[yscale=.85]
\node[inner sep=0pt,label=left:{\scriptsize $\color{cyan}{*1}$}] (top) at (3.125,5.25) {
\begin{tikzpicture}[scale=1,auto]
\node (1) at (0,0) [small vert] {};
\node (2) at (1,0) [small vert] {};
\node (3) at (1,1) [small vert] {};
\node (4) at (0,1) [small vert] {};
\node (5) at (.5,.5) [small vert] {};
\path [b] (1) to (2) to (3) to (4) to (1);
\path [b] (1) to (5);
\path [b] (2) to (5);
\path [b] (3) to (5);
\path [b] (4) to (5);
\begin{pgfonlayer}{background}
\fill[white,opacity=0.3] \convexpath{4,3,2,1}{7pt};
\end{pgfonlayer}
\end{tikzpicture}
};

\node[inner sep=0pt,label=left:{\scriptsize $\color{cyan}{*0}$}] (left1) at (2,3) {
\begin{tikzpicture}[scale=1,auto]
\filldraw[orange,opacity=0.3] (0,1) circle (7pt);
\node (1) at (0,0) [small vert] {};
\node (2) at (1,0) [small vert] {};
\node (3) at (1,1) [small vert] {};
\node (4) at (0,1) [small vert,fill=orange] {};
\node (5) at (.5,.5) [small vert] {};
\path [b] (1) to (2) to (3) to (4) to (1);
\path [b] (1) to (5);
\path [b] (2) to (5);
\path [b] (3) to (5);
\path [b] (4) to (5);
% \begin{pgfonlayer}{background}
% \fill[white,opacity=0.3] \convexpath{4,4}{7pt};
% \end{pgfonlayer}
\begin{pgfonlayer}{background}
\fill[white,opacity=0.3] \convexpath{4,3,2,1}{7pt};
\end{pgfonlayer}
\end{tikzpicture}
};
\node[inner sep=0pt,label=right:{\scriptsize $\color{cyan}{*0}$}] (right1) at (4.25,3) {
\begin{tikzpicture}[scale=1,auto]
\filldraw[orange,opacity=0.3] (.5,.5) circle (7pt);
\node (1) at (0,0) [small vert] {};
\node (2) at (1,0) [small vert] {};
\node (3) at (1,1) [small vert] {};
\node (4) at (0,1) [small vert] {};
\node (5) at (.5,.5) [small vert,fill=orange] {};
\path [b] (1) to (2) to (3) to (4) to (1);
\path [b] (1) to (5);
\path [b] (2) to (5);
\path [b] (3) to (5);
\path [b] (4) to (5);
% \begin{pgfonlayer}{background}
% \fill[white,opacity=0.3] \convexpath{4,4}{7pt};
% \end{pgfonlayer}
\begin{pgfonlayer}{background}
\fill[white,opacity=0.3] \convexpath{4,3,2,1}{7pt};
\end{pgfonlayer}
\end{tikzpicture}
};
\node[inner sep=0pt,label=left:{\scriptsize $\color{cyan}{*1}$}] (left2) at (2,.75) {
\begin{tikzpicture}[scale=1,auto]
\node (1) at (0,0) [small vert,fill=orange] {};
\node (2) at (1,0) [small vert] {};
\node (3) at (1,1) [small vert] {};
\node (4) at (0,1) [small vert,fill=orange] {};
\node (5) at (.5,.5) [small vert] {};
\path [b] (1) to (2) to (3) to (4) to (1);
\path [b] (1) to (5);
\path [b] (2) to (5);
\path [b] (3) to (5);
\path [b] (4) to (5);
\begin{pgfonlayer}{background}
\fill[orange,opacity=0.3] \convexpath{1,4}{7pt};
\end{pgfonlayer}
\begin{pgfonlayer}{background}
\fill[white,opacity=0.3] \convexpath{4,3,2,1}{7pt};
\end{pgfonlayer}
\end{tikzpicture}
};
\node[inner sep=0pt,label=right:{\scriptsize $\color{cyan}{*1}$}] (right2) at (4.25,.75) {
\begin{tikzpicture}[scale=1,auto]
\node (1) at (0,0) [small vert] {};
\node (2) at (1,0) [small vert] {};
\node (3) at (1,1) [small vert] {};
\node (4) at (0,1) [small vert,fill=orange] {};
\node (5) at (.5,.5) [small vert,fill=orange] {};
\path [b] (1) to (2) to (3) to (4) to (1);
\path [b] (1) to (5);
\path [b] (2) to (5);
\path [b] (3) to (5);
\path [b] (4) to (5);
\begin{pgfonlayer}{background}
\fill[orange,opacity=0.3] \convexpath{4,5}{7pt};
\end{pgfonlayer}
\begin{pgfonlayer}{background}
\fill[white,opacity=0.3] \convexpath{4,3,2,1}{7pt};
\end{pgfonlayer}
\end{tikzpicture}
};
\node[inner sep=0pt,label=left:{\scriptsize $\color{cyan}{*0}$}] (left3) at (3.125,-1.5) {
\begin{tikzpicture}[scale=1,auto]
\node (1) at (0,0) [small vert,fill=orange] {};
\node (2) at (1,0) [small vert] {};
\node (3) at (1,1) [small vert] {};
\node (4) at (0,1) [small vert,fill=orange] {};
\node (5) at (.5,.5) [small vert,fill=orange] {};
\path [b] (1) to (2) to (3) to (4) to (1);
\path [b] (1) to (5);
\path [b] (2) to (5);
\path [b] (3) to (5);
\path [b] (4) to (5);
\begin{pgfonlayer}{background}
\fill[orange,opacity=0.3] \convexpath{1,4,5}{7pt};
\end{pgfonlayer}
\begin{pgfonlayer}{background}
\fill[white,opacity=0.3] \convexpath{4,3,2,1}{7pt};
\end{pgfonlayer}
\end{tikzpicture}
};
\path [a] (top) to (left1);
\path [a] (top) to (right1);
\path [a] (left1) to (left2);
\path [a] (right1) to (right2);
\path [a] (left1) to (right2);
\path [a] (left2) to (left3);
\path [a] (right2) to (left3);
\end{tikzpicture} 
\qquad & \qquad
\begin{tikzpicture}[rotate=-90,xscale=.85]
\node[inner sep=0pt,label=left:{\scriptsize $\color{cyan}{*2}$}] (middle1) at (0,0) {
\begin{tikzpicture}[scale=1,auto]
\node (1) at (0,0) [small vert] {};
\node (2) at (1,0) [small vert] {};
\node (3) at (1,1) [small vert] {};
\node (4) at (0,1) [small vert] {};
\node (5) at (.5,.5) [small vert] {};
\path [b] (1) to (2) to (3) to (4) to (1);
\path [b] (1) to (5);
\path [b] (2) to (5);
\path [b] (3) to (5);
\path [b] (4) to (5);
\begin{pgfonlayer}{background}
\fill[white,opacity=0.3] \convexpath{4,3,2,1}{7pt};
\end{pgfonlayer}
\end{tikzpicture}
};
\node[inner sep=0pt,label=left:{\scriptsize $\color{cyan}{*1}$}] (middle2) at (2.25,0) {
\begin{tikzpicture}[scale=1,auto]
\filldraw[purple,opacity=0.3] (0,1) circle (7pt);
\node (1) at (0,0) [small vert] {};
\node (2) at (1,0) [small vert] {};
\node (3) at (1,1) [small vert] {};
\node (4) at (0,1) [small vert,fill=purple] {};
\node (5) at (.5,.5) [small vert] {};
\path [b] (1) to (2) to (3) to (4) to (1);
\path [b] (1) to (5);
\path [b] (2) to (5);
\path [b] (3) to (5);
\path [b] (4) to (5);
% \begin{pgfonlayer}{background}
% \fill[white,opacity=0.3] \convexpath{4,4}{7pt};
% \end{pgfonlayer}
\begin{pgfonlayer}{background}
\fill[white,opacity=0.3] \convexpath{4,3,2,1}{7pt};
\end{pgfonlayer}
\end{tikzpicture}
};
\node[inner sep=0pt,label=right:{\scriptsize $\color{cyan}{*0}$}] (right2) at (2.25,2.25) {
\begin{tikzpicture}[scale=1,auto]
\filldraw[purple,opacity=0.3] (.5,.5) circle (7pt);
\node (1) at (0,0) [small vert] {};
\node (2) at (1,0) [small vert] {};
\node (3) at (1,1) [small vert] {};
\node (4) at (0,1) [small vert] {};
\node (5) at (.5,.5) [small vert,fill=purple] {};
\path [b] (1) to (2) to (3) to (4) to (1);
\path [b] (1) to (5);
\path [b] (2) to (5);
\path [b] (3) to (5);
\path [b] (4) to (5);
% \begin{pgfonlayer}{background}
% \fill[white,opacity=0.3] \convexpath{4,4}{7pt}; 
% \end{pgfonlayer}
\begin{pgfonlayer}{background}
\fill[white,opacity=0.3] \convexpath{4,3,2,1}{7pt};
\end{pgfonlayer}
\end{tikzpicture}
};
\node[inner sep=0pt,label=left:{\scriptsize $\color{cyan}{*0}$}] (left3) at (4.5,-2.25) {
\begin{tikzpicture}[scale=1,auto]
\node (1) at (0,0) [small vert] {};
\node (2) at (1,0) [small vert,fill=purple] {};
\node (3) at (1,1) [small vert] {};
\node (4) at (0,1) [small vert,fill=purple] {};
\node (5) at (.5,.5) [small vert] {};
\path [b] (1) to (2) to (3) to (4) to (1);
\path [b] (1) to (5);
\path [b] (2) to (5);
\path [b] (3) to (5);
\path [b] (4) to (5);
\begin{pgfonlayer}{background}
\fill[purple,opacity=0.3] \convexpath{1,4,3,2}{7pt};
\end{pgfonlayer}
\end{tikzpicture}
};
\node[inner sep=0pt,label=left:{\scriptsize $\color{cyan}{*2}$}] (middle3) at (4.5,0) {
\begin{tikzpicture}[scale=1,auto]
\node (1) at (0,0) [small vert,fill=purple] {};
\node (2) at (1,0) [small vert] {};
\node (3) at (1,1) [small vert] {};
\node (4) at (0,1) [small vert,fill=purple] {};
\node (5) at (.5,.5) [small vert] {};
\path [b] (1) to (2) to (3) to (4) to (1);
\path [b] (1) to (5);
\path [b] (2) to (5);
\path [b] (3) to (5);
\path [b] (4) to (5);
\begin{pgfonlayer}{background}
\fill[purple,opacity=0.3] \convexpath{1,4}{7pt};
\end{pgfonlayer}
\begin{pgfonlayer}{background}
\fill[white,opacity=0.3] \convexpath{4,3,2,1}{7pt};
\end{pgfonlayer}
\end{tikzpicture}
};
\node[inner sep=0pt,label=right:{\scriptsize $\color{cyan}{*2}$}] (right3) at (4.5,2.25) {
\begin{tikzpicture}[scale=1,auto]
\node (1) at (0,0) [small vert] {};
\node (2) at (1,0) [small vert] {};
\node (3) at (1,1) [small vert] {};
\node (4) at (0,1) [small vert,fill=purple] {};
\node (5) at (.5,.5) [small vert,fill=purple] {};
\path [b] (1) to (2) to (3) to (4) to (1);
\path [b] (1) to (5);
\path [b] (2) to (5);
\path [b] (3) to (5);
\path [b] (4) to (5);
\begin{pgfonlayer}{background}
\fill[purple,opacity=0.3] \convexpath{4,5}{7pt};
\end{pgfonlayer}
\begin{pgfonlayer}{background}
\fill[white,opacity=0.3] \convexpath{4,3,2,1}{7pt};
\end{pgfonlayer}
\end{tikzpicture}
};
\node[inner sep=0pt,label=left:{\scriptsize $\color{cyan}{*1}$}] (middle4) at (6.75,0) {
\begin{tikzpicture}[scale=1,auto]
\node (1) at (0,0) [small vert,fill=purple] {};
\node (2) at (1,0) [small vert] {};
\node (3) at (1,1) [small vert] {};
\node (4) at (0,1) [small vert,fill=purple] {};
\node (5) at (.5,.5) [small vert,fill=purple] {};
\path [b] (1) to (2) to (3) to (4) to (1);
\path [b] (1) to (5);
\path [b] (2) to (5);
\path [b] (3) to (5);
\path [b] (4) to (5);
\begin{pgfonlayer}{background}
\fill[purple,opacity=0.3] \convexpath{1,4,5}{7pt};
\end{pgfonlayer}
\begin{pgfonlayer}{background}
\fill[white,opacity=0.3] \convexpath{4,3,2,1}{7pt};
\end{pgfonlayer}
\end{tikzpicture}
};
\node[inner sep=0pt,label=left:{\scriptsize $\color{cyan}{*0}$}] (left4) at (6.75,-2.25) {
\begin{tikzpicture}[scale=1,auto]
\node (1) at (0,0) [small vert,fill=purple] {};
\node (2) at (1,0) [small vert,fill=purple] {};
\node (3) at (1,1) [small vert] {};
\node (4) at (0,1) [small vert,fill=purple] {};
\node (5) at (.5,.5) [small vert] {};
\path [b] (1) to (2) to (3) to (4) to (1);
\path [b] (1) to (5);
\path [b] (2) to (5);
\path [b] (3) to (5);
\path [b] (4) to (5);
\begin{pgfonlayer}{background}
\fill[purple,opacity=0.3] \convexpath{1,4,3,2}{7pt};
\end{pgfonlayer}
\end{tikzpicture}
};
\node[inner sep=0pt,label=right:{\scriptsize $\color{cyan}{*0}$}] (right4) at (6.75,2.25) {
\begin{tikzpicture}[scale=1,auto]
\node (1) at (0,0) [small vert] {};
\node (2) at (1,0) [small vert,fill=purple] {};
\node (3) at (1,1) [small vert] {};
\node (4) at (0,1) [small vert,fill=purple] {};
\node (5) at (.5,.5) [small vert,fill=purple] {};
\path [b] (1) to (2) to (3) to (4) to (1);
\path [b] (1) to (5);
\path [b] (2) to (5);
\path [b] (3) to (5);
\path [b] (4) to (5);
\begin{pgfonlayer}{background}
\fill[purple,opacity=0.3] \convexpath{1,4,3,2}{7pt};
\end{pgfonlayer}
\end{tikzpicture}
};
\node[inner sep=0pt,label=left:{\scriptsize $\color{cyan}{*0}$}] (middle5) at (9,0) {
\begin{tikzpicture}[scale=1,auto]
\node (1) at (0,0) [small vert,fill=purple] {};
\node (2) at (1,0) [small vert] {};
\node (3) at (1,1) [small vert,fill=purple] {};
\node (4) at (0,1) [small vert,fill=purple] {};
\node (5) at (.5,.5) [small vert,fill=purple] {};
\path [b] (1) to (2) to (3) to (4) to (1);
\path [b] (1) to (5);
\path [b] (2) to (5);
\path [b] (3) to (5);
\path [b] (4) to (5);
\begin{pgfonlayer}{background}
\fill[purple,opacity=0.3] \convexpath{1,4,3,2}{7pt};
\end{pgfonlayer}
\end{tikzpicture}
};
\path [a] (middle1) to (middle2);
\path [a] (middle1) to (right2);
\path [a] (middle2) to (left3);
\path [a] (middle2) to (middle3);
\path [a] (middle2) to (right3);
\path [a] (right2) to (right3);
\path [a] (middle3) to (left4);
\path [a] (middle3) to (middle4);
\path [a] (middle4) to (middle5);
\path [a] (right3) to (right4);
\path [a] (right3) to (middle4);
\end{tikzpicture}\\
$\DNG(W_5)$ \qquad & \qquad $\GEN(W_5)$
\end{tabular}

\caption{
\label{fig:DNGW5 and GENW5}
Representative quotients of $\DNG(W_5)$ and $\GEN(W_5)$. Note that the removal of the terminal positions from $\GEN(W_5)$ produces $\DNG(W_5)$.}
\end{figure}

The set of terminal positions of $\DNG(G)$ is $\mathcal{N}$. This characterization provides an opportunity to reformulate Proposition~\ref{prop:all-same general} for the avoidance game in terms of $\mathcal{N}$.

\begin{prop}\label{prop:all-same}
If every set in $\mathcal{N}$ has the same parity $r$ for some graph $G$, then $\nim(\DNG(G))=r$.
\end{prop}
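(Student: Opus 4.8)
The plan is to reduce this directly to Proposition~\ref{prop:all-same general} by showing that the hypothesis on the parity of the sets in $\mathcal{N}$ translates into a statement about the parity of the number of moves in every play of $\DNG(G)$. The crucial observation, already recorded in the sentence preceding the proposition, is that the terminal positions of $\DNG(G)$ are precisely the maximal nongenerating sets, i.e.\ the elements of $\mathcal{N}$.

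First I would recall the mechanics of $\DNG(G)$: the starting position is the empty set, and each move consists of selecting one previously-unselected vertex, so each move increases the size of the current position $P$ by exactly one. Consequently, a play that begins at $\emptyset$ and terminates at a position $N$ consists of exactly $|N|$ moves. Since every play must end at a terminal position, and the terminal positions are exactly the sets in $\mathcal{N}$, the number of moves in any play equals $|N|$ for the particular $N \in \mathcal{N}$ at which that play ends.

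Next I would invoke the hypothesis. If every set in $\mathcal{N}$ has the same parity $r$, then $\pty(N) = r$ for each such terminal position $N$. Combining this with the previous step, the number of moves of every play is $|N|$ for some $N \in \mathcal{N}$, and hence has parity $\pty(|N|) = \pty(N) = r$. In other words, every play of $\DNG(G)$ has the same parity $r$ of moves. Applying Proposition~\ref{prop:all-same general} then yields $\nim(\DNG(G)) = r$, completing the argument.

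I do not anticipate a genuine obstacle here, since the statement is essentially a specialization of Proposition~\ref{prop:all-same general} to the avoidance game. The only point deserving care is the identification of the terminal positions of $\DNG(G)$ with $\mathcal{N}$ and the accompanying fact that the move-count of a play equals the cardinality of the terminal position reached; once these are in place, the parity hypothesis transfers immediately and the conclusion follows with no further computation.
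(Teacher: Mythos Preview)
Your argument is correct and matches the paper's approach exactly: the paper presents this proposition as a direct reformulation of Proposition~\ref{prop:all-same general}, noting just before the statement that the terminal positions of $\DNG(G)$ are precisely the elements of $\mathcal{N}$. Your write-up simply makes explicit the observation that each play consists of $|N|$ moves for some $N\in\mathcal{N}$, which is the intended bridge.
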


Stating an analogous result for our achievement games in terms of $\mathcal{N}$ is not possible since characterizing the family of terminal positions is more difficult. A full characterization of the nim-value of $\DNG$ played on groups in terms of covering properties of the maximal subgroups is available in \cite{BeneshErnstSiebenDNG}. A similar characterization for the avoidance game $\DNG$ played on graphs using convex hulls is unlikely because the spectrum of possible nim-values for these games seems to be large.

\begin{prop}\label{prop:sum-result}
Assume $|V(H)|$ and $|V(K)|$ are even for the graphs $H$ and $K$, and consider the disjoint union $G:=H\cupdot K$. If $\nim(\DNG(H))=0=\nim(\DNG(K))$, then $\nim(\DNG(G))=0$. If $\nim(\GEN(H))=0=\nim(\GEN(K))$, then $\nim(\GEN(G))=0$.
\end{prop}

\begin{proof}
The second player can win by replying according to the winning strategy in the graph where the first player made a selection. If one of the graphs is already generated, then the second player selects an arbitrary unselected vertex in the graph where the first player's selection came from. Note that unselected vertices in one of the graphs are still available moves until the game finishes, even if that graph is already generated.
\end{proof}

\begin{rem}
\label{rem:Nisall}
A set $P$ of vertices generates if and only if $H\subseteq P$ for some $H\in\mathcal{G}$. On the other hand, $P$ does not generate if and only if $P\subseteq N$ for some $N\in\mathcal{N}$. So $\mathcal{G}$ and $\mathcal{N}$ each completely determine both the avoidance and the achievement games on a graph with a given vertex set. Thus the geodetic closure games of \cite{BuckleyHarary86} and our geodetic convexity games are the same as long as the minimal generating sets or equivalently the maximal nongenerating sets are the same.  See Example~\ref{example:coronacounterexample} for a graph where the geodetic closure and convex hull are different operators, yet the corresponding games are the same.
\end{rem}

%---------------%
\section{Structure equivalence}
%---------------%

For a graph $G$, the games $\GEN(G)$ and $\DNG(G)$, as well as the geodetic closure games developed in \cite{BuckleyHarary86, HaynesHenningTiller,Necascova}, fall into the broader context of building hypergraph games discussed in~\cite[Section~8.1]{SiebenHypergraph}.  
In this section, we recall essential facts from~\cite{ErnstSieben,SiebenHypergraph}. Throughout this section, assume $\mathsf{G}$ is one of the building games $\GEN(G)$ or $\DNG(G)$ for a graph $G$.  

For a subset $P$ of $V$, define $\mathcal{N}_P:=\{N\in\mathcal{N}\mid P\subseteq N\}$ as in~\cite{MccoySieben}. Two positions $P$ and $Q$ of $\mathsf{G}$ are \emph{structure equivalent} if $\mathcal{N}_P = \mathcal{N}_Q$, in which case we write $P \sim Q$. We define the collection of \emph{intersection sets} via
\[
\mathcal{I}:=\left\{\bigcap \mathcal{J}\mid \mathcal{J}\subseteq \mathcal{N}\right\}
\]
and we define $\lceil P \rceil:=\bigcap \mathcal{N}_P$, which is the smallest intersection set containing $P$. Note that $\bigcap \mathcal{J}=V$ if $\mathcal{J}=\emptyset$. Thus, positions $P$ and $Q$ are structure equivalent if and only if $\lceil P \rceil =\lceil Q \rceil$ by~\cite[Proposition~4.2]{SiebenHypergraph}. Note that $\mathcal{I}$ is a closure system and $P\subseteq [P]\subseteq \lceil P\rceil$ for all $P$.

\begin{example}
Consider the path graph $P_4$ of Example~\ref{ex:Frattini example} shown in Figure~\ref{fig:Frattini example}.  We have 
\[
\mathcal{N}=\{\{v_1,v_2,v_3\},\{v_2,v_3,v_4\}\},\qquad 
\mathcal{I}=\mathcal{N}\cup\{\{v_2,v_3\},V\}.
\]
Positions $P:=\{v_1\}$ and $Q:=\{v_1,v_3\}$ are structure equivalent since $\lceil P\rceil=\{v_1,v_2,v_3\}=\lceil Q\rceil$. We also have $\{v_2\}\sim \{v_3\}$ since $\lceil\{v_2\}\rceil=\{v_2,v_3\}=\lceil\{v_3\}\rceil$.
\end{example}

The \emph{structure class} $X_I$ consists of all positions that are structure equivalent to $I\in\mathcal{I}$. Observe that for $I\in \mathcal{I}\setminus \{V\}$, $I$ is the largest element in its structure class.

For the achievement game $\GEN(G)$, $X_V$ consists of all terminal positions. Note that $V$ might not be a game position in $\GEN(G)$, in which case $V \notin X_V$.  On the other hand, $X_V = \emptyset$ for the avoidance game $\DNG(G)$.

We say that a structure class is \emph{terminal} if it contains a terminal position. For $\GEN(G)$, the only terminal structure class is $X_V$. For $\DNG(G)$, the terminal structure classes are of the form $X_N$ for $N \in \mathcal{N}$.

We say that $X_J$ is an \emph{option} of $X_I$ if $X_J\cap \Opt(I)\neq \emptyset$.  We denote the set of options of $X_I$ by $\Opt(X_I)$. Proposition~4.5 from \cite{SiebenHypergraph} implies that if $X_J\in\Opt(X_I)$, then $X_J\cap \Opt(P)\neq \emptyset$ for all $P\in X_I$. That is, structure equivalence is compatible with the option relationship between game positions.

The \emph{type} of the structure class $X_I$ is defined via
\[
\type(X_I) := (\pty(I),\nim_0(X_I),\nim_1(X_I)),
\]
where if $I=V$, then $\nim_0(X_V):=0$ and $\nim_1(X_V):=0$, and if $I\neq V$, then 
\begin{align*}
\nim_{\pty(I)}(X_I)&:=\mex(\nim_{1-\pty(I)}(\Opt(X_I))),\\
\nim_{1-\pty(I)}(X_I)&:=\mex(\nim_{\pty(I)}(\Opt(X_I))\cup \{\nim_{\pty(I)}(X_I)\}).
\end{align*}
The recursive computation of these types is referred to as \emph{type calculus}. 

The next result appears as~\cite[Proposition~4.12]{SiebenHypergraph} and guarantees that the type of a structure class encodes the nim-values of the positions contained in the class. In fact, if $P\sim Q$ and $\pty(P)=\pty(Q)$, then $\nim(P)=\nim(Q)$.

\begin{prop}
For a position $P$ of $\mathsf{G}$, if $P\in X_I$, then $\nim(P) = \nim_{\pty(P)}(X_I)$.
\end{prop}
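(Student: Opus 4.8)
The plan is to prove the statement by strong induction along the finite acyclic game digraph of $\mathsf{G}$, using the defining recursion $\nim(P)=\mex(\nim(\Opt(P)))$. First I would note that the numbers $\nim_0(X_I)$ and $\nim_1(X_I)$ are themselves well-defined: the type calculus only refers back to classes in $\Opt(X_I)$, and if $X_J\in\Opt(X_I)$ then $J\supsetneq I$ (adjoining a vertex outside $I$ can only enlarge $\lceil\cdot\rceil$), so that recursion terminates on the finite poset $(\mathcal{I},\subseteq)$. The base case is the terminal positions, where $\nim(P)=0$: in $\GEN(G)$ terminality means $[P]=V$, hence $\mathcal{N}_P=\emptyset$ and $\lceil P\rceil=V$, so $P\in X_V$ and $\nim_{\pty(P)}(X_V)=0$; in $\DNG(G)$ terminality means $P=N$ for some $N\in\mathcal{N}$, so $P\in X_N$ with $\Opt(X_N)=\emptyset$ and $\nim_{\pty(N)}(X_N)=\mex(\emptyset)=0$.

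For the inductive step I would fix a non-terminal $P\in X_I$, so $I\neq V$ and $I\subseteq N$ for some $N\in\mathcal{N}$, whence $I$ and all its subsets are nongenerating. The options $Q=P\cup\{v\}$ split according to whether $v$ lies in $I$. If $v\in I\setminus P$, then $P\subseteq Q\subseteq I$ forces $\lceil Q\rceil=I$, so $Q$ is a legal move in either game (it is nongenerating) and $Q\in X_I$; these same-class options exist exactly when $P\subsetneq I$ and all have parity $1-\pty(P)$. If $v\notin I$, then $\lceil Q\rceil=:J\supsetneq I$ and $Q$ lands in a class $X_J\in\Opt(X_I)$. By the compatibility of structure equivalence with the option relation stated above, the set of other-classes reached is exactly $\Opt(X_I)$, independent of the choice of $P\in X_I$. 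Applying the inductive hypothesis to each option (an out-neighbor of $P$) and using $\pty(Q)=1-\pty(P)$, I obtain
\[
\nim(\Opt(P))=\{\nim_{1-\pty(P)}(X_J)\mid X_J\in\Opt(X_I)\}\cup S_P,
\]
where $S_P=\{\nim_{1-\pty(P)}(X_I)\}$ when $P\subsetneq I$ and $S_P=\emptyset$ when $P=I$.

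Writing $p=\pty(I)$, I would finish with a short case analysis. When $\pty(P)=p$ and $P=I$ there are no same-class options, so $\nim(I)=\mex(\{\nim_{1-p}(X_J)\})=\nim_p(X_I)$ directly. When $\pty(P)=1-p$ (which forces $P\subsetneq I$) the displayed set is $\{\nim_p(X_J)\}\cup\{\nim_p(X_I)\}$, whose mex is $\nim_{1-p}(X_I)$ by definition. The delicate case is $\pty(P)=p$ with $P\subsetneq I$, where $\nim(P)=\mex(\{\nim_{1-p}(X_J)\}\cup\{\nim_{1-p}(X_I)\})$ and I must show the extra term $\nim_{1-p}(X_I)$ does not perturb the mex. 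This is exactly where the design of the type calculus matters: since $\nim_{1-p}(X_I)=\mex(\{\nim_p(X_J)\}\cup\{\nim_p(X_I)\})$ is the minimum excludant of a set containing $\nim_p(X_I)$, we get $\nim_{1-p}(X_I)\neq\nim_p(X_I)=\mex(\{\nim_{1-p}(X_J)\})$; and adjoining to a set one value distinct from its mex leaves the mex unchanged, so $\nim(P)=\nim_p(X_I)$. I expect this absorption step, together with verifying that same-class moves are always legal nongenerating options and that no option escapes $\Opt(X_I)$, to be the main obstacle, the remainder being routine mex arithmetic.
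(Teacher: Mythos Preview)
Your proof is correct. The paper itself does not prove this proposition; it simply cites it as \cite[Proposition~4.12]{SiebenHypergraph}, so there is no in-paper argument to compare against. Your induction along the game digraph, splitting the options of $P\in X_I$ into same-class moves ($v\in I\setminus P$) and other-class moves ($v\notin I$), together with the absorption argument showing $\nim_{1-p}(X_I)\neq\nim_p(X_I)$, is exactly the standard proof and is presumably what appears in the cited reference. The one point worth making explicit is why every other-class option lands in some $X_J\in\Opt(X_I)$ and conversely: since $P\in X_I$ means $\mathcal{N}_P=\mathcal{N}_I$, for $v\notin I$ one has $\mathcal{N}_{P\cup\{v\}}=\mathcal{N}_P\cap\{N:v\in N\}=\mathcal{N}_{I\cup\{v\}}$, so $P\cup\{v\}$ and $I\cup\{v\}$ lie in the same structure class; this simultaneously shows that no option escapes $\Opt(X_I)$ and (together with the cited compatibility fact) that every class in $\Opt(X_I)$ is reached. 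With that detail in hand, your argument is complete.
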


The \emph{structure digraph} of $\mathsf{G}$ has vertex set $\{X_I \mid I\in \mathcal{I}\}$ and directed edge set $\{(X_I,X_J) \mid X_J \in\Opt(X_I)\}$. We visualize the structure digraph with a \emph{structure diagram} that also shows the type of each structure class. In particular, a vertex $X_I$ is represented by a triangle pointing up or down depending on the parity of $I$. The triangle points down when $\pty(I) = 1$ and points up when $\pty(I) = 0$. The numbers within each triangle are $\nim_0(X_I)$ and $\nim_1(X_I)$.

Instead of studying the full game, the structure diagram of a game allows us to process a potentially smaller quotient digraph. The utility of a structure diagram depends on the size of the structure classes.

The structure diagram for $\mathsf{G}$ can be quite large, making it difficult to gain any intuition about the game. We can make further identifications in the structure diagram to build a \emph{simplified structure diagram} by identifying $X_I$ and $X_J$ if the following conditions hold:
\begin{enumerate}
\item $\pty(I) = \pty(J)$;
\item $\type(\Opt(X_I)) = \type(\Opt(X_J))$;
\item The lengths of the longest directed paths starting at $X_I$ and at $X_J$ are the same.
\end{enumerate}
Note that the first two conditions imply that $\type(X_I)=\type(X_J)$, while the third condition avoids vertical collapsing. In the simplified structure diagram we use shaded triangles if they represent several structure classes. The triangles in a simplified structure diagram are referred to as \emph{simplified structure classes}.
 
Algorithm~4.13 from~\cite{SiebenHypergraph} describes a process for determining the structure diagram for a game $\mathsf{G}$ using the maximal nongenerating sets and type calculus. Note that the Frattini subset, the smallest set in $\mathcal{I}$, is structure equivalent to the empty set, so $\lceil\emptyset\rceil=\Phi$. That is, the unique source of the structure digraph is $X_\Phi$. Hence we can simply read off the nim-value of $\mathsf{G}$ from the structure diagram or the simplified structure diagram by looking at the first entry displayed in the triangle for $X_{\Phi}$ or the second entry of $\type(X_\Phi)$.

\begin{example}
The family of maximal nongenerating sets for the diamond graph $G$, shown in Figure~\ref{fig:diamond2}, is $\mathcal{N}=\{I,J\}$, with $I=\{v_1,v_2,v_3\}$ and $J=\{v_2,v_3,v_4\}$. The family of minimal generating sets is $\mathcal{G}=\{\{v_1,v_4\}\}$. The family of intersection sets is $\mathcal{I}=\{\Phi,I,J,V\}$, where $\Phi=I\cap J=\{v_2,v_3\}$ is the Frattini subset. 

Figure~\ref{fig:diamond2} also highlights the positions in the three structure classes $X_\Phi$, $X_I$, and $X_J$ of $\DNG(G)$ together with the structure diagram and simplified structure diagram. The sets in $X_V$ are not positions of $\DNG(G)$ since these sets are generating. Since $\Opt(X_I)=\emptyset=\Opt(X_J)$ and $\Opt(X_\Phi)=\{X_I,X_J\}$, type calculus shows that
\[
\type(X_I)=(1,1,0)=\type(X_J),\quad \type(X_\Phi)=(0,1,0).
\]
Since the first component of the common type of $X_I$ and $X_J$ is $1$, these structure classes are represented by downward pointing triangles in the structure diagram. The first component of the type of $X_\Phi$ is $0$, so this structure class is represented by an upward pointing triangle. The numbers in these triangles are the second and third components of the types. Since $\pty(I)=1=\pty(J)$ and $\Opt(X_I)=\Opt(X_J)$, structure classes $X_I$ and $X_J$ are identified in the simplified structure diagram. 
This identification is indicated by the shading of the bottom triangle.

Figure~\ref{fig:diamond} shows the structure diagram and the simplified structure diagram for $\GEN(G)$. This structure diagram contains the additional structure class $X_V$ with $\Opt(X_V)=\emptyset$. Now we have $\Opt(X_I)=\{X_V\}=\Opt(X_J)$, and so 
\[
\type(X_V) =(0,0,0),\quad \type(X_I)=(1,2,1)=\type(X_J),\quad \type(X_\Phi)=(0,0,1). 
\]
Structure classes $X_I$ and $X_J$ are again identified in the simplified structure diagram. 

These structure diagrams show that $\nim(\DNG(G))=1$ and $\nim(\GEN(G))=0$. These nim-values are the first numbers in the top triangles corresponding to $X_\Phi$.
\end{example}

\begin{figure}[h]
\begin{tabular}{c}
\begin{tikzpicture}[scale=.8,auto]
\node (1) at (0,1) [ vert] {\scriptsize $v_1$};
\node[vert, fill=green] (2) at (1,2)  {\scriptsize $v_2$};
\node (3) at (1,0) [ vert, fill=green] {\scriptsize $v_3$};
\node (4) at (2,1) [ vert] {\scriptsize $v_4$};
\path [b] (1) to (2) to (4) to (3) to (1);
\path [b] (2) to (3);
\end{tikzpicture}
\end{tabular}
\begin{tabular}{c}
\begin{tikzpicture}[xscale=1.5,yscale=1.2,inner sep=.3mm]k\draw[line width=10mm,yellow!35,rounded corners, line cap=round] (0,1) -- (-.5,2) -- (0,3) -- (.5,2) -- (0,1) -- (0,2);
\draw[line width=12mm,yellow!35,rounded corners, line cap=round] (-1.5,0) -- (-2,1) -- (-1.5,2) -- (-1,1) -- (-1.5,0) -- (-1.5,1);
\draw[line width=12mm,yellow!35,rounded corners, line cap=round] (1.5,0) -- (2,1) -- (1.5,2) -- (1,1) -- (1.5,0) -- (1.5,1);
\node (0) at (0,3) {$\underset{\color{cyan}*1}{\scriptstyle\emptyset}$};
\node (1) at (-1.5,2) {$\underset{\color{cyan}*0}{\scriptstyle\{v_1\}}$};
\node (2) at (-.5,2) {$\underset{\color{cyan}*0}{\scriptstyle\{v_2\}}$};
\node (3) at (.5,2) {$\underset{\color{cyan}*0}{\scriptstyle\{v_3\}}$};
\node (4) at (1.5,2) {$\underset{\color{cyan}*0}{\scriptstyle\{v_4\}}$};
\node (12) at (-2,1) {$\underset{\color{cyan}*1}{\scriptstyle\{v_1,v_2\}}$};
\node (13) at (-1,1) {$\underset{\color{cyan}*1}{\scriptstyle\{v_1,v_3\}}$};
\node (23) at (0,1) {$\underset{\color{cyan}*1}{\scriptstyle\{v_2,v_3\}}$};
\node (24) at (1,1) {$\underset{\color{cyan}*1}{\scriptstyle\{v_2,v_4\}}$};
\node (34) at (2,1) {$\underset{\color{cyan}*1}{\scriptstyle\{v_3,v_4\}}$};
\node (123) at (-1.5,0) {$\underset{\color{cyan}*0}{\scriptstyle\{v_1,v_2,v_3\}}$};
\node (234) at (1.5,0) {$\underset{\color{cyan}*0}{\scriptstyle\{v_2,v_3,v_4\}}$};
\draw[->] (0)--(3);
\draw[->] (0)--(2);
\draw[->] (0)--(1);
\draw[->] (0)--(4);
\draw[->] (1)--(12);
\draw[->] (1)--(13);
\draw[->] (2)--(23);
\draw[->] (2)--(12);
\draw[->] (2)--(24);
\draw[->] (3)--(13);
\draw[->] (3)--(23);
\draw[->] (3)--(34);
\draw[->] (4)--(24);
\draw[->] (4)--(34);
\draw[->] (12)--(123);
\draw[->] (13)--(123);
\draw[->] (23)--(123);
\draw[->] (23)--(234);
\draw[->] (24)--(234);
\draw[->] (34)--(234);
\end{tikzpicture}
\end{tabular}
\begin{tabular}{c}
\includegraphics[scale=.4]{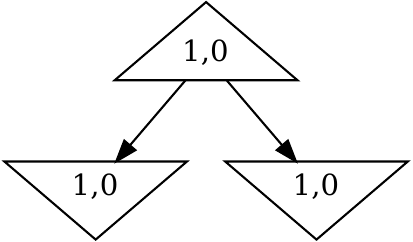} \\
\\
\includegraphics[scale=.4]{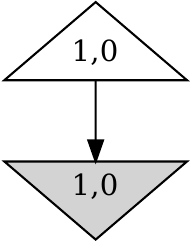}
\end{tabular}
\hfil

\caption{
\label{fig:diamond2}
The diamond graph $G$ with the Frattini subset highlighted in green, the game $\DNG(G)$, and its structure diagrams. The yellow clouds on the top, left, and right indicate the structure classes $X_\Phi$, $X_I$, and $X_J$, respectively. 
}
\end{figure}

\begin{figure}[h]
\includegraphics[scale=.4]{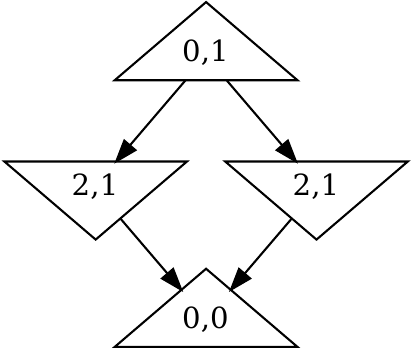}
\qquad
\includegraphics[scale=.4]{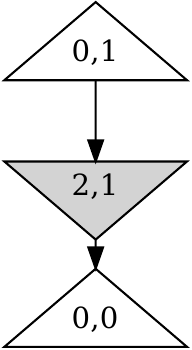}

\caption{
\label{fig:diamond}
The structure diagrams for the game $\GEN(G)$ on the diamond graph $G$.}
\end{figure}

\begin{example}
\label{ex:4-pan}
Figure~\ref{fig:kite} shows the 4-pan graph $G$ and the corresponding structure diagrams
of $\DNG(G)$ and $\GEN(G)$. 
We have
\begin{align*}
\mathcal{N}   &= \{\{v_1,v_2,v_3\},\{v_1,v_2,v_4\},\{v_2,v_3,v_4,v_5\}\} \\
\mathcal{G}   &= \{ \{v_1,v_5\},\{v_1,v_3,v_4\} \}.
\end{align*}
Note that $\lceil \{v_3\} \rceil=\{v_1,v_2,v_3\}\cap \{v_2,v_3,v_4,v_5\}=\{v_2,v_3\}$ while the convex hull is $[\{v_3 \}]=\{v_3\}$. The Frattini subset is $\Phi=\{v_2\}$. The structure diagrams show that $\nim(\DNG(G))=2$ and $\nim(\GEN(G))=0$.
\end{example}

\begin{figure}[h]
\begin{tabular}{ccc}
\multirow[c]{2}{*}{
\begin{tikzpicture}[scale=1,auto,rotate=45]
\node (4) at (0,0) [ vert] {\scriptsize $v_4$};
\node (2) at (1,0) [ vert, fill=green] {\scriptsize $v_2$};
\node (3) at (1,1) [ vert] {\scriptsize $v_3$};
\node (5) at (0,1) [ vert] {\scriptsize $v_5$};
\node (1) at (1.707106,-0.707106) [ vert] {\scriptsize $v_1$};
\path [b] (4) to (2) to (3) to (5) to (4);
\path [b] (1) to (2);
\end{tikzpicture}
} \quad & \quad
\includegraphics[scale=0.4]{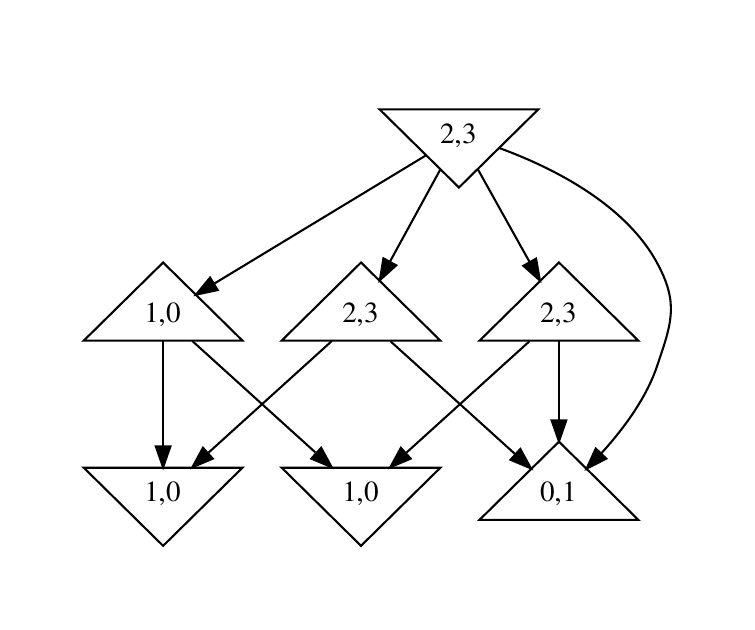} & \includegraphics[scale=0.4]{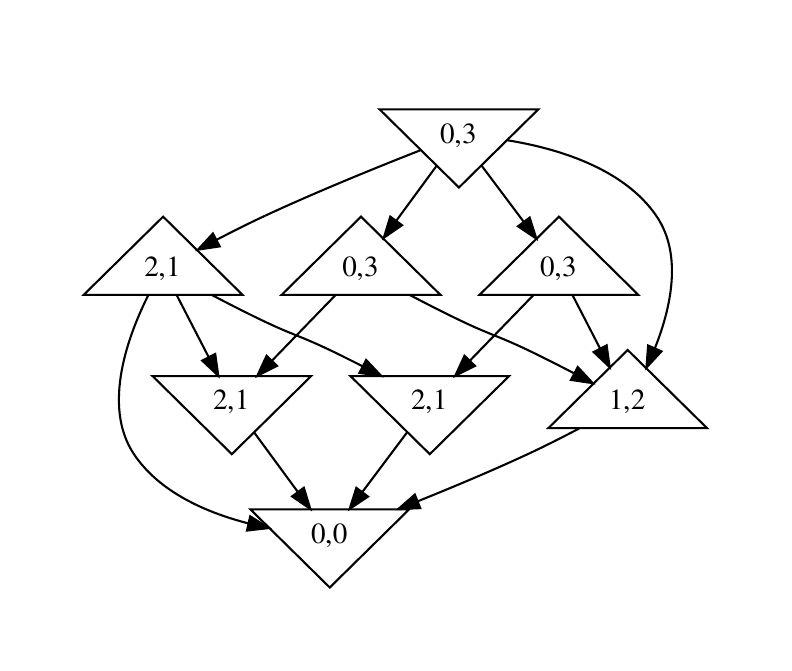}\\
    & \includegraphics[scale=0.4]{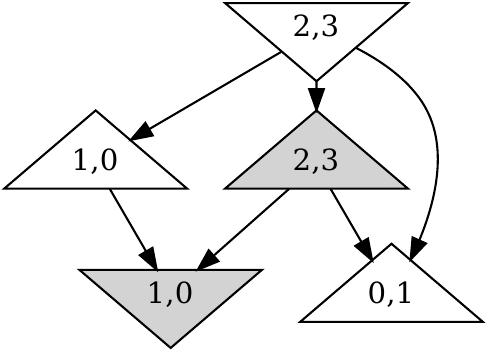} & \includegraphics[scale=0.4]{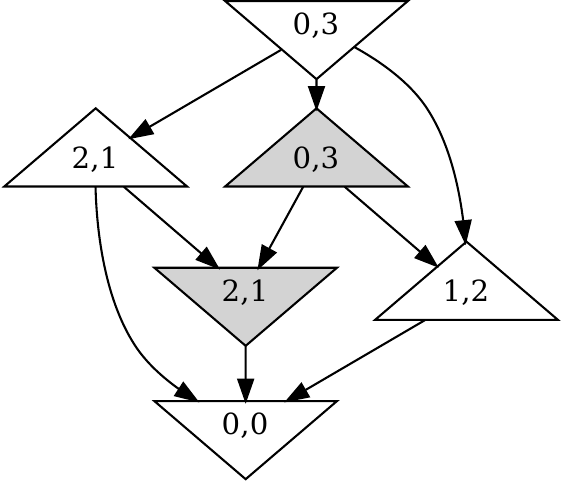}\\
$G$ & $\DNG(G)$ & $\GEN(G)$
\end{tabular}
\caption{\label{fig:kite}
The $4$-pan graph and its structure diagrams. The Frattini subset is highlighted in green.}
\end{figure}

\begin{example}
The Petersen graph $P$ is a diameter 2 graph. Every opening move is shrewd since $P$ is vertex transitive. The structure diagrams in Figure~\ref{fig:Petersen} confirm the predictions of Proposition~\ref{equivalent} by showing that $\nim(\DNG(P))=1$ and $\nim(\GEN(P))=0$. There are two types of maximal nongenerating sets: 5-cycles and a vertex together with its three neighbors. This implies that the Frattini subset is empty, which can also be seen immediately by Proposition~\ref{prop:vertexTransitive}. Note that convex hull of $S=\{v_1,v_3,v_9\}$ is $V$ but the geodetic closure of $S$ is not the full vertex set.  Since the minimal generating sets for our games and those in \cite{BuckleyHarary86} differ, the games are different. \end{example}

\begin{figure}[h]
\centering
\begin{tabular}{ccc}
\quad 
\begin{tikzpicture}[scale=1.1]
\node (1) at (18:2) [ vert] {\scriptsize $v_5$};
\node (2) at (90:2) [ vert] {\scriptsize $v_1$};
\node (3) at (162:2) [ vert] {\scriptsize $v_2$};
\node (4) at (234:2) [ vert] {\scriptsize $v_3$};
\node (5) at (306:2) [ vert] {\scriptsize $v_4$};
\node (11) at (18:1) [ vert] {\scriptsize $v_{10}$};
\node (12) at (90:1) [ vert] {\scriptsize $v_6$};
\node (13) at (162:1) [ vert] {\scriptsize $v_7$};
\node (14) at (234:1) [ vert] {\scriptsize $v_8$};
\node (15) at (306:1) [ vert] {\scriptsize $v_9$};
\path [b] (1) to (2) to (3) to (4) to (5) to (1);
\path [b] (1) to (11) to (13) to (15) to (12) to (14) to (11);
\path [b] (2) to (12);
\path [b] (3) to (13);
\path [b] (4) to (14);
\path [b] (5) to (15);
\end{tikzpicture}
\quad & \quad  \includegraphics[scale=.4]{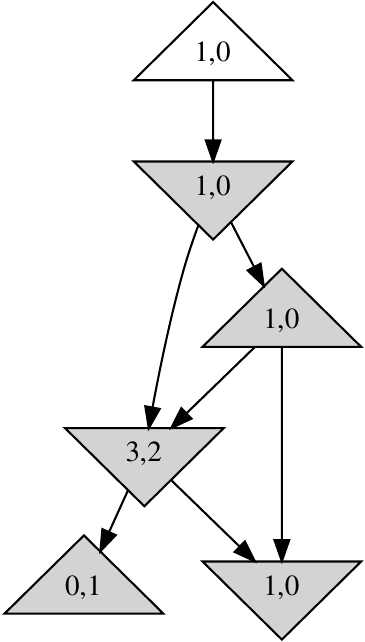} \quad & \quad \includegraphics[scale=.4]{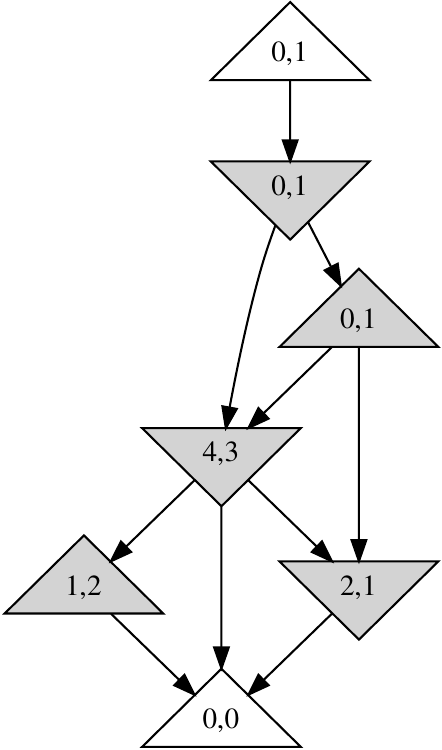}\\
\quad $P$ \quad & \quad $\DNG(P)$ \quad & \quad $\GEN(P)$
\end{tabular}

\caption{
\label{fig:Petersen}
Petersen graph and its simplified structure diagrams.}
\end{figure}

%---------------%
\section{Maximal nongenerating subsets for graph operations}
%---------------%

We saw that the maximal nongenerating sets play an important role in structure equivalence. In this section we develop results to find $\mathcal{N}$ for various graph constructions.

%---------------%
\subsection{Disjoint unions}
%---------------%
For graphs $G$ and $H$, $G\cupdot H$ is the graph that results from taking the disjoint union of $G$ and $H$. The maximal nongenerating subsets for disjoint unions of graphs have the expected form.

\begin{prop}\label{prop:maximal disjoint union}
For graphs $G$ and $H$,
\[
\mathcal{N}(G\cupdot H)=\{N\cup V(H)\mid N\in\mathcal{N}(G)\}\cup \{V(G)\cup N\mid N\in\mathcal{N}(H)\}.
\]
\end{prop}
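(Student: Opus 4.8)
The plan is to characterize the convex hull operator on $G \cupdot H$ in terms of the convex hulls on the two components, and then derive the maximal nongenerating sets from that characterization.

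First I would establish the key structural fact about geodesics in a disjoint union: since there is no path between a vertex of $G$ and a vertex of $H$, every geodesic lies entirely within one component. Consequently, for any $P \subseteq V(G \cupdot H)$, writing $P_G := P \cap V(G)$ and $P_H := P \cap V(H)$, the geodetic closure satisfies $I[P] = I[P_G] \cup I[P_H]$, where the closures on the right are computed in $G$ and $H$ respectively. Iterating to stabilization then yields the clean product formula for the convex hull,
\[
[P] = [P_G] \cup [P_H],
\]
with each bracket on the right interpreted in its own component. This is the engine that drives everything else.

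From this formula the generating condition becomes transparent: $[P] = V(G) \cup V(H)$ if and only if $[P_G] = V(G)$ \emph{and} $[P_H] = V(H)$; that is, $P$ generates $G \cupdot H$ exactly when $P_G$ generates $G$ and $P_H$ generates $H$. Taking contrapositives, $P$ is nongenerating precisely when $P_G$ fails to generate $G$ or $P_H$ fails to generate $H$. Next I would translate this into the maximal nongenerating sets. A nongenerating set $P$ is maximal if we cannot add any further vertex while remaining nongenerating. If $P_G$ already fails to generate $G$, then maximality forces $P_H = V(H)$ (we may freely add all of $H$ without generating) and forces $P_G$ to be a maximal nongenerating set of $G$; this gives sets of the form $N \cup V(H)$ with $N \in \mathcal{N}(G)$. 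Symmetrically, the other case gives $V(G) \cup N$ with $N \in \mathcal{N}(H)$. Conversely, each such set is nongenerating (since one coordinate fails to generate) and maximal (any added vertex completes generation of the deficient component), so these are exactly the members of $\mathcal{N}(G \cupdot H)$.

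The main obstacle is the rigorous justification of the convex-hull product formula $[P] = [P_G] \cup [P_H]$, since the convex hull is defined as an iterated closure rather than a single application of $I$. The subtle point is to confirm that no interaction between the components can occur at any stage of the iteration: after each application of $I$ the two coordinates evolve independently, so the fixed point is reached coordinatewise and $[P_G]$, $[P_H]$ are genuinely the component convex hulls. Once this is pinned down, the remaining maximality argument is a routine case analysis, and I would take care to note that in the edge case where one component is trivial the stated formula still degenerates correctly.
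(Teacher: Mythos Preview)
Your argument is correct. Both your proof and the paper's rest on the same underlying observation---that geodesics in $G\cupdot H$ never cross components---but you organize things differently. You first isolate this as a standalone lemma, the product formula $[P]=[P_G]\cup[P_H]$, and then read off the characterization of generating sets and deduce $\mathcal{N}(G\cupdot H)$ by a clean maximality argument. The paper instead argues directly on a fixed $M\in\mathcal{N}(G\cupdot H)$: it picks a missing vertex $v$, say in $V(H)$, and uses the component separation in an ad hoc way to show $V(G)\subseteq M$ (since adding $v$ cannot help capture any vertex of $G$), whence $M\cap V(H)\in\mathcal{N}(H)$. Your route buys a reusable structural lemma at the cost of justifying the iteration of $I$ coordinatewise (which you correctly flag as the only nontrivial step); the paper's route is shorter but hides the same fact inside a single line.
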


% \footnote{N: The notation $(\mathcal{N(G)}\lor\{V(H)\})\cup(\{V(G)\}\lor\mathcal{N(H)})$ might be useful.}

\begin{proof}
It is clear that $\mathcal{N}(G\cupdot H)\supseteq \{N \cup V(H) \mid N\in\mathcal{N}(G)\}\cup \{V(G)\cup N\mid N\in\mathcal{N}(H)\}$. Now, suppose $M\in \mathcal{N}(G\cupdot H)$ and choose $v\in V(G\cupdot H)\setminus M$. Then $v$ is an element of exactly one of $V(G)$ or $V(H)$. Without loss of generality, $v\in V(H)$. Since $M$ is a maximal nongenerating set for $V(G\cupdot H)$, $[M\cup\{v\}]=V(G\cupdot H)$. Suppose there exists $u\in V(G)\setminus M$ so that $u\notin [M]$.  But since $v\in V(H)$ and $G$ and $H$ are disjoint, $u\notin [M\cup\{v\}]$, which contradicts $M$ being a maximal nongenerating set.  Thus, it must be the case that $V(G)\subseteq M$.  Certainly, $M\cap V(H)$ must be a maximal nongenerating set for $V(H)$.  This yields the reverse containment.
\end{proof}

The next result follows immediately from Proposition~\ref{prop:maximal disjoint union}.

\begin{corollary}
For graphs $G$ and $H$, $\Phi(G\cupdot H)= \Phi(G) \cup \Phi(H)$.
\end{corollary}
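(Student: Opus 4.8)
The plan is to deduce this directly from the explicit description of $\mathcal{N}(G\cupdot H)$ furnished by Proposition~\ref{prop:maximal disjoint union}, since $\Phi$ is by definition the intersection of all maximal nongenerating sets. Writing $\mathcal{N}(G\cupdot H)$ as the union of the two families
\[
\mathcal{A}:=\{N\cup V(H)\mid N\in\mathcal{N}(G)\},\qquad \mathcal{B}:=\{V(G)\cup N\mid N\in\mathcal{N}(H)\},
\]
I would compute $\Phi(G\cupdot H)=\bigcap\mathcal{A}\cap\bigcap\mathcal{B}$ and simplify each factor separately.

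The key computation is that $\bigcap\mathcal{A}=\bigl(\bigcap_{N\in\mathcal{N}(G)}N\bigr)\cup V(H)=\Phi(G)\cup V(H)$, using the fact that the common summand $V(H)$ distributes out of the intersection, and symmetrically $\bigcap\mathcal{B}=V(G)\cup\Phi(H)$. Intersecting these two expressions gives
\[
\Phi(G\cupdot H)=\bigl(\Phi(G)\cup V(H)\bigr)\cap\bigl(V(G)\cup\Phi(H)\bigr).
\]
Since $\Phi(G)\subseteq V(G)$ and $\Phi(H)\subseteq V(H)$ and the two vertex sets are disjoint, I would expand this intersection using distributivity: the four resulting pieces are $\Phi(G)\cap V(G)=\Phi(G)$, $\Phi(G)\cap\Phi(H)=\emptyset$, $V(H)\cap V(G)=\emptyset$, and $V(H)\cap\Phi(H)=\Phi(H)$, which union to $\Phi(G)\cup\Phi(H)$ as desired.

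One edge case worth checking is whether either $\mathcal{N}(G)$ or $\mathcal{N}(H)$ could be empty, which would make the corresponding intersection vacuous; but the trivial-graph example in the excerpt shows $\mathcal{N}=\{\emptyset\}$ rather than $\emptyset$ even in the smallest case, so both families are nonempty for any nonempty graph and the distributivity step is safe. I do not anticipate a genuine obstacle here: the statement is a formal consequence of Proposition~\ref{prop:maximal disjoint union} together with the distributive laws for union and intersection over disjoint ground sets. The only care needed is to justify pulling the fixed summand out of an indexed intersection, which is the single substantive line of the argument.
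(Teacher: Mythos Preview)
Your proof is correct and is exactly the argument the paper has in mind: the corollary is stated without proof as an immediate consequence of Proposition~\ref{prop:maximal disjoint union}, and your distributivity computation spells out that implication. The edge case you flag is handled correctly, since $\mathcal{N}$ is always nonempty.
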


%---------------%
\subsection{Product graphs}\label{subsection:productgraphs}
%---------------%
For graphs $G$ and $H$, the \emph{box product} $G \boxprod H$ is the graph with vertex set $V(G) \times V(H)$ and there is an edge from $(x_1, y_1)$ to $(x_2, y_2)$ if and only if either $x_1=x_2$ with $y_1y_2\in E(H)$, or $y_1=y_2$ with $x_1x_2\in E(G)$.
%Consider the product graph $G\boxprod H$ for graphs $G$ and $H$. 
Define the projection maps $p_G:V(G)\times V(H)\to V(G)$ and $p_H:V(G)\times V(H)\to V(H)$ via $p_G(g,h)=g$ and $p_H(g,h)=h$, respectively. The following result appears as Theorem~3.1 in~\cite{PelayoBook}.

\begin{prop}\label{prop:closure product}
For graphs $G$ and $H$, if $S\subseteq V(G \boxprod H)$, then $[S]=[p_G(S)]\times [p_H(S)]$.
\end{prop}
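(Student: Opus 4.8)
The plan is to prove the two set inclusions $[S]\subseteq[p_G(S)]\times[p_H(S)]$ and $[S]\supseteq[p_G(S)]\times[p_H(S)]$ separately, relying on the well-known distance formula for the box product, namely that $d_{G\boxprod H}((x_1,y_1),(x_2,y_2))=d_G(x_1,x_2)+d_H(y_1,y_2)$. This distance formula is the central tool, and from it I can read off exactly which vertices lie on geodesics in the product.

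For the forward inclusion, I would first show that the set $[p_G(S)]\times[p_H(S)]$ is convex in $G\boxprod H$; since it contains $S$ and $[S]$ is the smallest convex set containing $S$, this gives $[S]\subseteq[p_G(S)]\times[p_H(S)]$. To prove convexity, I take two vertices $(a_1,b_1),(a_2,b_2)$ in the product set and a vertex $(c,d)$ on a geodesic between them. The distance formula implies that any geodesic from $(a_1,b_1)$ to $(a_2,b_2)$ projects under $p_G$ to a geodesic (or a constant) from $a_1$ to $a_2$ in $G$, and similarly under $p_H$; hence $c$ lies on a geodesic between $a_1,a_2\in[p_G(S)]$ and $d$ lies on a geodesic between $b_1,b_2\in[p_H(S)]$. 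By convexity of the hulls $[p_G(S)]$ and $[p_H(S)]$, we get $c\in[p_G(S)]$ and $d\in[p_H(S)]$, so $(c,d)$ is in the product set.

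For the reverse inclusion, the key observation is that $[S]$ projects onto the full hulls: I would argue that $p_G([S])\supseteq[p_G(S)]$ and $p_H([S])\supseteq[p_H(S)]$, and that $[S]$ is itself a ``box'' of the form $A\times B$. The cleanest route is to note that $[S]$ is convex and then show a convex set in the product must be a product of its projections, after which the forward inclusion (applied as an equality of projections) forces $A=[p_G(S)]$ and $B=[p_H(S)]$. Concretely, for $a\in[p_G(S)]$ I want to produce the entire fiber $\{a\}\times[p_H(S)]$ inside $[S]$: using that $a$ lies on a $G$-geodesic between projections of points of $S$, I lift to geodesics in the product whose intermediate vertices land in $[S]$, sweeping out the fiber via the distance formula. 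Iterating the geodetic closure and using that $[S]$ is closed then captures every point of $[p_G(S)]\times[p_H(S)]$.

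The main obstacle I anticipate is the reverse inclusion, specifically justifying that the convex hull in the product ``factors'' as a genuine Cartesian product of the two coordinate hulls rather than some smaller convex subset. Proving that every convex set decomposes as $A\times B$ requires showing that if $(a,b)$ and $(a',b')$ both lie in a convex set, then so do $(a,b')$ and $(a',b)$; this follows from the distance formula because $(a,b')$ sits on a geodesic realizing the ``L-shaped'' path through $(a',b')$ or $(a,b)$, but the bookkeeping of which intermediate vertices are forced into the hull, and the induction on the number of closure iterations needed to reach an arbitrary target point, is where the real work lies. The forward inclusion, by contrast, should be routine once the distance formula and the definition of convex hull are in hand.
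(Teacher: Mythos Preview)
The paper does not prove this proposition at all; it simply cites it as Theorem~3.1 in Pelayo's book \cite{PelayoBook}. So there is no proof in the paper to compare against.

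Your outline is essentially the standard argument and is correct. The forward inclusion is exactly as you describe. For the reverse inclusion, the cleanest way to organize what you have is in three steps: (i) every convex set $K$ in $G\boxprod H$ satisfies $K=p_G(K)\times p_H(K)$, by the L-shaped geodesic observation you already identified (if $(a,b),(a',b')\in K$ then $(a,b')$ lies on a geodesic between them); (ii) the projection $p_G(K)$ of a convex set $K$ is itself convex in $G$---this follows once you have (i), since given $a,a'\in p_G(K)$ you can take a common second coordinate $b$ with $(a,b),(a',b)\in K$ and lift any $G$-geodesic from $a$ to $a'$ to the fiber at height $b$; (iii) apply (i) and (ii) to $K=[S]$ to obtain $[S]=p_G([S])\times p_H([S])$ with each factor convex and containing the respective projection of $S$, hence containing its hull. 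Together with the forward inclusion this forces equality. The only piece not made explicit in your sketch is step~(ii); once you state it, the ``iterating the closure'' and ``sweeping out the fiber'' passages become unnecessary and the bookkeeping you were worried about disappears.
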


We can use the previous result to characterize maximal nongenerating sets in product graphs.

\begin{prop}\label{prop:maximal product}
For graphs $G$ and $H$,
\[
\mathcal{N}(G\boxprod H)=\{N\times V(H)\mid N\in \mathcal{N}(G)\}\cup \{V(G)\times N\mid N\in\mathcal{N}(H)\}.
\]
\end{prop}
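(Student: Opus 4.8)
The plan is to characterize $\mathcal{N}(G\boxprod H)$ by combining the product formula for convex hulls in Proposition~\ref{prop:closure product} with the observation that a set generates $G\boxprod H$ if and only if both of its projections generate their respective factors. First I would establish this generating criterion: by Proposition~\ref{prop:closure product}, a set $S\subseteq V(G\boxprod H)$ satisfies $[S]=V(G)\times V(H)$ if and only if $[p_G(S)]\times[p_H(S)]=V(G)\times V(H)$, which holds exactly when $[p_G(S)]=V(G)$ and $[p_H(S)]=V(H)$. Thus $S$ is nongenerating precisely when at least one projection fails to generate its factor.

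For the forward containment, I would take an arbitrary $M\in\mathcal{N}(G\boxprod H)$. Since $M$ is nongenerating, the criterion above forces $p_G(M)$ to be nongenerating in $G$ or $p_H(M)$ to be nongenerating in $H$; say without loss of generality $p_H(M)$ is nongenerating. Then $M\subseteq V(G)\times p_H(M)\subseteq V(G)\times N'$ for some $N'\in\mathcal{N}(H)$ containing $p_H(M)$. The set $V(G)\times N'$ is nongenerating by the criterion (its $H$-projection is $N'\subsetneq V(H)$), so maximality of $M$ gives $M=V(G)\times N'$, which lies in the claimed family. For the reverse containment, I would show each set of the form $V(G)\times N$ with $N\in\mathcal{N}(H)$ is indeed maximal nongenerating: it is nongenerating by the criterion, and adjoining any vertex $(g,h)$ with $h\notin N$ produces a set whose $H$-projection is $N\cup\{h\}$. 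Here the crux is that $N\cup\{h\}$ generates $H$ by maximality of $N$, while the $G$-projection becomes all of $V(G)$; I would argue the $G$-projection is full because $V(G)\times N$ already has $G$-projection $V(G)$ (assuming $N\neq\emptyset$, with the degenerate case handled separately), so both projections now generate and the augmented set generates $G\boxprod H$.

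The main obstacle I anticipate is the maximality verification in the reverse direction, specifically controlling what happens to \emph{both} projections simultaneously when a single vertex is added. Adding one vertex $(g,h)$ only changes the $H$-projection by the single element $h$, but I must ensure the $G$-projection remains (or becomes) all of $V(G)$ so that the criterion triggers generation; this requires care about whether $N$ is nonempty and whether $V(G)\times N$ already surjects onto $V(G)$ under $p_G$. A secondary subtlety is the symmetric bookkeeping between the two families in the union and confirming that no set of mixed form (where both projections are proper nongenerating subsets) can be maximal, since such a set is strictly contained in one of the listed sets. I expect the projection lemma to do most of the heavy lifting, reducing the whole argument to elementary reasoning about generating sets in the individual factors, closely paralleling the disjoint-union argument in Proposition~\ref{prop:maximal disjoint union}.
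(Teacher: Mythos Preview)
Your approach is correct and essentially the same as the paper's: both use Proposition~\ref{prop:closure product} as the key lemma to reduce generation in $G\boxprod H$ to generation of the projections, then argue maximality in each direction. The only organizational difference is that the paper, in the forward direction, first invokes Proposition~\ref{prop:maximal is closed} to write a maximal nongenerating $X$ as the product $[p_G(X)]\times[p_H(X)]$ and then enlarges one factor to $V(H)$, whereas you embed $M$ directly into $V(G)\times N'$ for some $N'\in\mathcal{N}(H)$ without appealing to convexity; both routes arrive at the same conclusion with the same amount of work.

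Your worry about $N=\emptyset$ is easily dispatched: if $H$ is nontrivial then every singleton is nongenerating, so $\emptyset$ is never maximal and every $N\in\mathcal{N}(H)$ is nonempty, giving $p_G(V(G)\times N)=V(G)$ automatically. (If $H$ is trivial then $G\boxprod H\cong G$ and the statement reduces to a tautology on the $G$-side; the paper's own proof has the same implicit nontriviality assumption here.) So the ``main obstacle'' you anticipate is not really an obstacle at all.
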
 

\begin{proof}
First, suppose $X\in \mathcal{N}(G\boxprod H)$. By Propositions~\ref{prop:maximal is closed} and~\ref{prop:closure product}, we know
\[
X=[X]=[p_G(X)]\times [p_H(X)].
\]
Without loss of generality, suppose $[p_G(X)]\neq V(G)$. Then $[p_G(X)]\times V(H)$ is a nongenerating set containing $X$ since
\[
[[p_G(X)]\times V(H)]=[p_G(X)]\times V(H)\neq V(G\boxprod H).
\]
Hence $X=[p_G(X)]\times V(H)$ by the maximality of $X$. Thus $[p_H(X)]=V(H)$. It remains to show that $[p_G(X)]$ is a maximal nongenerating set for $V(G)$. If $g\in V(G)\setminus [p_G(X)]$, then $[[p_G(X)]\cup\{g\}]=V(G)$ since 
\[
V(G)\times V(H)=[X\cup\{(g,h)\}]=[p_G(X)\cup\{g\}]\times V(H)\subseteq [[p_G(X)]\cup\{g\}]\times V(H) 
\]
for each $h\in V(H)$. Thus $p_G(X)\in \mathcal{N}(G)$.

Now, without loss of generality, consider $A:=N\times V(H)$ for some $N\in\mathcal{N}(G)$. By Proposition~\ref{prop:closure product}, we see that
\[
[A]=[p_G(A)]\times [p_H(A))]=A\neq V(G\boxprod H).
\]
This shows that $A$ is a nongenerating set for $V(G\boxprod H)$. Let $(g,h)\in V(G\boxprod H)\setminus A$ so that $g\notin N$. Since $N$ is a maximal nongenerating set for $G$, $[N\cup\{g\}]=V(G)$.  By Proposition~\ref{prop:closure product}, we have
\begin{align*}
[A\cup\{(g,h)\}]&=[p_G(A\cup\{(g,h)\})]\times [p_H(A\cup\{(g,h)\})]\\
&=[N\cup\{g\}]\times V(H)\\
&= V(G)\times V(H)\\
&=V(G\boxprod H).
\end{align*}
Thus, $N\times V(H)\in\mathcal{N}(G\boxprod H)$.
\end{proof}

\begin{prop}\label{prop:Frattini product}
For graphs $G$ and $H$, $\Phi(G\boxprod H)=\Phi(G)\times \Phi(H)$.
\end{prop}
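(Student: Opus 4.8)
The plan is to compute $\Phi(G\boxprod H)=\bigcap\mathcal{N}(G\boxprod H)$ directly from the explicit description of the maximal nongenerating sets furnished by Proposition~\ref{prop:maximal product}, using nothing beyond the distributivity of the Cartesian product over intersections. No game theory or convexity argument is needed at this stage; the entire statement is a purely set-theoretic consequence of the preceding proposition.

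First I would split the intersection along the two families in Proposition~\ref{prop:maximal product}. Since $\mathcal{N}(G\boxprod H)$ is the union of $\{N\times V(H)\mid N\in\mathcal{N}(G)\}$ and $\{V(G)\times N\mid N\in\mathcal{N}(H)\}$, the intersection over the union equals the intersection of the two separate intersections, yielding
\[
\Phi(G\boxprod H)=\Bigl(\bigcap_{N\in\mathcal{N}(G)}(N\times V(H))\Bigr)\cap\Bigl(\bigcap_{N\in\mathcal{N}(H)}(V(G)\times N)\Bigr).
\]
At this point I would note that both index families $\mathcal{N}(G)$ and $\mathcal{N}(H)$ are nonempty: by Proposition~\ref{prop:vertex is contained in max nongen} every vertex of a nontrivial graph lies in some maximal nongenerating set, while the trivial graph still has $\mathcal{N}=\{\emptyset\}$. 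Thus no empty-intersection anomaly can occur, and the distributive step below is literally valid.

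Next I would evaluate each factor separately. Because the second coordinate ranges over all of $V(H)$, a pair $(g,h)$ lies in $\bigcap_{N\in\mathcal{N}(G)}(N\times V(H))$ exactly when $g\in N$ for every $N\in\mathcal{N}(G)$, that is, when $g\in\Phi(G)$; hence this factor equals $\Phi(G)\times V(H)$. Symmetrically, the second factor equals $V(G)\times\Phi(H)$. Finally, using $\Phi(G)\subseteq V(G)$ and $\Phi(H)\subseteq V(H)$, a pair $(g,h)$ belongs to $(\Phi(G)\times V(H))\cap(V(G)\times\Phi(H))$ precisely when $g\in\Phi(G)$ and $h\in\Phi(H)$, so the intersection collapses to $\Phi(G)\times\Phi(H)$, as claimed.

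I do not anticipate a genuine obstacle here: the argument is essentially mechanical bookkeeping once Proposition~\ref{prop:maximal product} is in hand. The only point deserving explicit mention is the nonemptiness of the two index families, which legitimizes pulling the product outside the intersection; this is guaranteed by the earlier results and so requires no new work.
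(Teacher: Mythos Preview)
Your proof is correct and follows essentially the same approach as the paper: both apply Proposition~\ref{prop:maximal product} and then use the distributivity of the Cartesian product over intersection to reduce $\Phi(G\boxprod H)$ to $(\Phi(G)\times V(H))\cap(V(G)\times\Phi(H))=\Phi(G)\times\Phi(H)$. Your explicit remark about the nonemptiness of $\mathcal{N}(G)$ and $\mathcal{N}(H)$ is a nice touch of rigor that the paper leaves implicit.
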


\begin{proof}  %https://proofwiki.org/wiki/Cartesian_Product_of_Intersections
Let $N_1,\ldots,N_k$ and $M_1,\ldots, M_l$ be the maximal nongenerating sets for $G$ and $H$, respectively.  Using Proposition~\ref{prop:maximal product} and well-known results concerning Cartesian products and intersections, we see that
%Proof uses Cor 1 and then main theorem at link above.
\begin{align*}
\Phi(G\boxprod H) & = (N_1\times V(H))\cap \cdots \cap (N_k\times V(H)) \cap (V(G)\times M_1)\cap \cdots \cap (V(G)\times M_l)\\
 & = ((N_1\cap \cdots \cap N_k)\times V(H)) \cap (V(G)\times (M_1\cap \cdots \cap M_l))\\
& = (\Phi(G)\times V(H))\cap (V(G)\times \Phi(H))\\
& = (\Phi(G)\cap V(G))\times (V(H)\cap \Phi(H))\\
& = \Phi(G)\times \Phi(H),
\end{align*}
as desired.
\end{proof}

%---------------%
\subsection{1-clique sums}
%---------------%
%\url{https://en.wikipedia.org/wiki/Clique-sum} 

For $k\geq 2$, the \emph{$1$-clique-sum} of the graphs $G_1,\ldots,G_k$ is formed from the disjoint union of the graphs by identifying a vertex from each graph to form a single shared vertex. We also say that $G$ is the $1$-clique-sum at $c$ if $c$ is the shared vertex. Define $\mathcal{N}_c(G_i) := \{N\in\mathcal{N}(G_i) \mid c\in N\}$ and $\Psi_i:=\bigcap\mathcal{N}_c(G_i)$. Note that $\mathcal{N}_c(G_i)\neq \emptyset$ by Proposition~\ref{prop:vertex is contained in max nongen} and $c\in\Psi_i$ for all $i$.

\begin{prop}
\label{prop:1cliqueSum}
If $G$ is the $1$-clique-sum of the nontrivial connected graphs $G_1,\ldots, G_k$ at $c$, then 
\[
\mathcal{N}(G)=\bigcup_{i=1}^k\{ N\cup \bigcup_{j\neq i}V(G_j) \mid N\in \mathcal{N}_c(G_i)\}.
\]
\end{prop}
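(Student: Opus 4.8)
The plan is to prove the two set-containments separately, exploiting the structural fact that in a $1$-clique-sum at $c$, any geodesic between two vertices lying in different pieces $G_i$ and $G_j$ must pass through the shared cut-vertex $c$. This observation means that the convex hull of a set $S$ essentially decomposes piece by piece: a vertex in $V(G_i)\setminus\{c\}$ can only be forced into $[S]$ by vertices that, together with $c$, already lie in $V(G_i)$. I would first record this geodesic lemma (or assume it as the obvious structural fact about cut-vertices), since every subsequent argument relies on it.

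For the containment $\supseteq$, I would take a candidate set $M = N \cup \bigcup_{j\neq i} V(G_j)$ with $N\in\mathcal{N}_c(G_i)$ and verify two things: that $M$ is nongenerating, and that it is maximal. For nongenerating, observe that $M$ misses some vertex $w\in V(G_i)\setminus N$; since $N$ is nongenerating in $G_i$ and contains $c$, and geodesics into $V(G_i)$ from the other pieces must route through $c\in N$, the restriction of $[M]$ to $G_i$ is just $[N]_{G_i}=N\neq V(G_i)$, so $w\notin[M]$. For maximality, I would add any vertex $v\notin M$; necessarily $v\in V(G_i)\setminus N$, and since $N$ is \emph{maximal} nongenerating in $G_i$ we get $[N\cup\{v\}]_{G_i}=V(G_i)$; combined with the fact that $M$ already contains all of every other piece, the hull becomes all of $V(G)$.

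For the reverse containment $\subseteq$, let $M\in\mathcal{N}(G)$. By Proposition~\ref{prop:maximal is closed}, $M=[M]$ is convex. The key claim is that $M$ must contain the full vertex set $V(G_j)$ of every piece except exactly one. Suppose $M$ omits a vertex from two distinct pieces $G_i$ and $G_j$; I would argue that then one could still add a vertex without generating, contradicting maximality, because the missing pieces remain ungenerated independently. So $M$ contains all of $V(G_j)$ for $j\neq i$ and meets $V(G_i)$ in a proper convex subset. Setting $N:=M\cap V(G_i)$, the geodesic lemma shows $N$ is convex in $G_i$ and nongenerating; maximality of $M$ in $G$ forces $N$ to be maximal nongenerating in $G_i$, and since $c$ lies in every other $V(G_j)\subseteq M$ and $c\in V(G_i)$, one checks $c\in N$, giving $N\in\mathcal{N}_c(G_i)$.

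The main obstacle I anticipate is the bookkeeping in the reverse containment: carefully justifying that a maximal nongenerating $M$ cannot omit vertices from two different pieces, and that $c\in N$. The cut-vertex $c$ is shared by all pieces and lies in $M$ (since $c\in V(G_j)\subseteq M$ for any $j\neq i$), which is what lets the $G_i$-restriction interact correctly and pins down $c\in N$; making this interaction airtight, together with the connectivity and nontriviality hypotheses on the $G_i$ that guarantee each piece genuinely requires its own generating contribution, is where the real care is needed rather than in any single computation.
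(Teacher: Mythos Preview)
Your proposal is correct and follows essentially the same outline as the paper's proof. The only variation is in the $\subseteq$ direction: instead of your by-contradiction argument that $M$ cannot omit vertices from two distinct pieces, the paper picks a single witness $x \in V(G_i)\setminus [M]$ and observes directly that $M \cup \bigcup_{j\ne i} V(G_j)$ still omits $x$ from its hull, so by maximality $M$ already contains every other piece---the same idea, stated slightly more efficiently.
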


\begin{proof}
If $N\in\mathcal{N}_c(G_i)$ for $i\in\{1,\ldots,k\}$, then it is clear that $N\cup \bigcup_{j\neq i}V(G_j)$ is a maximal nongenerating set for $G$, which verifies one containment.

Now, let $M\in\mathcal{N}(G)$. Since $M$ is nongenerating, there exists $x\in V(G)\setminus [M]$. Then there exists $i$ such that $x\in V(G_i)$. Certainly, $x\notin [M\cup \bigcup_{j\neq i}V(G_j)]$, and so $M\cup \bigcup_{j\neq i}V(G_j)$ is nongenerating. Since $M$ is maximal nongenerating, this implies that $M=M\cup\bigcup_{j\neq i}V(G_j)$. Define $N:=M\cap V(G_i)$. Since $c\in M$, we also have $c\in N$. Hence $N$ is maximal nongenerating for $G_i$, otherwise $M$ is not maximal nongenerating for $G$. Thus
\[
M= N\cup \bigcup_{j\neq i}V(G_j)
\]
for $N\in\mathcal{N}_c(G_i)$, which shows that $M$ has the desired form.
\end{proof}

\begin{prop}
If $G$ is the $1$-clique-sum of the nontrivial connected graphs $G_1,\ldots, G_k$ at $c$, then 
\[
\Phi(G)=\bigcup_{i=1}^k \Psi_i.
\]
\end{prop}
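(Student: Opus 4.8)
The plan is to compute $\Phi(G) = \bigcap \mathcal{N}(G)$ directly using the explicit description of $\mathcal{N}(G)$ provided by Proposition~\ref{prop:1cliqueSum}. By that proposition, $\mathcal{N}(G)$ is a disjoint union (over $i$) of families of sets, where each set has the form $N \cup \bigcup_{j\neq i} V(G_j)$ for $N \in \mathcal{N}_c(G_i)$. The key observation is that intersecting over all of $\mathcal{N}(G)$ can be organized by first intersecting within each index $i$ and then intersecting across the indices.

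First I would fix an index $i$ and intersect the sets coming from $G_i$. For each such set the ``tail'' $\bigcup_{j\neq i} V(G_j)$ is constant, so
\[
\bigcap_{N\in\mathcal{N}_c(G_i)}\left(N\cup\bigcup_{j\neq i}V(G_j)\right)=\left(\bigcap_{N\in\mathcal{N}_c(G_i)}N\right)\cup\bigcup_{j\neq i}V(G_j)=\Psi_i\cup\bigcup_{j\neq i}V(G_j),
\]
using the definition $\Psi_i:=\bigcap\mathcal{N}_c(G_i)$ and the fact that union distributes over intersection when one operand is held fixed. Then $\Phi(G)$ is the intersection of these $k$ sets over $i=1,\ldots,k$.

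The main step is to evaluate $\bigcap_{i=1}^k\left(\Psi_i\cup\bigcup_{j\neq i}V(G_j)\right)$ and show it equals $\bigcup_{i=1}^k\Psi_i$. I would argue this by examining membership relative to each vertex set $V(G_m)$. Since the $G_i$ share only the vertex $c$, the sets $V(G_i)\setminus\{c\}$ are pairwise disjoint. Fix $m$ and intersect the $m$-th set (which contributes $\Psi_m$ on the $G_m$ part, since its tail omits $V(G_m)$) with all the other sets (each of which contains $V(G_m)$ wholesale in its tail). Restricting the total intersection to $V(G_m)$ therefore yields exactly $\Psi_m$. Taking the union of these restrictions over $m$, and noting $c\in\Psi_i$ for every $i$ so the shared vertex is handled consistently, gives $\bigcup_{m=1}^k\Psi_m$, which is the claimed formula.

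The part requiring the most care is bookkeeping at the shared vertex $c$: because $c\in V(G_i)$ for all $i$ and $c\in\Psi_i$ for all $i$ (as noted before the statement of Proposition~\ref{prop:1cliqueSum}), I must verify that $c$ lands in $\Phi(G)$ and does not create any spurious elements when the restrictions to each $V(G_m)$ are reassembled. This is the place where the disjointness of the $V(G_i)\setminus\{c\}$ is essential, and where a sloppy set-theoretic manipulation could introduce or drop $c$. Everything else is routine distribution of union over intersection, so I expect the proof to be short once the restriction-to-$V(G_m)$ viewpoint is set up.
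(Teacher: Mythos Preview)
Your proposal is correct and follows essentially the same approach as the paper: both use Proposition~\ref{prop:1cliqueSum} to describe $\mathcal{N}(G)$ and then localize the computation to each $V(G_m)$, observing that the maximal nongenerating sets arising from $G_i$ with $i\neq m$ automatically contain all of $V(G_m)$, so only the constraint from $\Psi_m$ survives. The paper presents this via direct element-chasing (double inclusion), whereas you package the same idea as a set-algebraic computation; the content is the same.
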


\begin{proof}
Let $v\in\Phi(G)$. Then $v$ is an element of every maximal nongenerating set of $G$.  Moreover, there exists $j$ such that $v\in V(G_j)$. 
By Proposition~\ref{prop:1cliqueSum}, $v$ is an element of every $N\in\mathcal{N}_c(G_j)$. Hence $v\in\Psi_j$, and so $v\in\bigcup_{i=1}^k\Psi_i$. 

Now, let $v\in \bigcup_{i} \Psi_i$, so that $v \in \Psi_j$ for some $j$. Then $v$ is an element of every $N\in\mathcal{N}_c(G_j)$, which implies that $v$ is an element of $N\cup\bigcup_{i\neq j}V(G_i)$ for every $N\in\mathcal{N}_c(G_j)$. Proposition~\ref{prop:1cliqueSum} implies that $v\in M$ for every $M\in\mathcal{N}(G)$, and so $v\in\Phi(G)$.
\end{proof}

\begin{example}
The 4-pan graph in Example~\ref{ex:4-pan} is the 1-clique sum of $C_4$ and $P_2$ with $c=v_2$. We have $\mathcal{N}_c(C_4)=\{\{v_2,v_3\},\{v_2,v_4\}\}$ and $\mathcal{N}_c(P_2)=\{\{v_2\}\}$. Hence 
\[
\mathcal{N}=\{\{v_2,v_3\}\cup\{v_1,v_2\},\{v_2,v_4\}\cup\{v_1,v_2\},\{v_2,v_3,v_4,v_5\}\cup\{v_2\}\},
\]
as we saw in Example~\ref{ex:4-pan}.
\end{example}

\begin{example}
The previous example generalizes to the 1-clique sum $G$ of $C_4$ and $P_n$ with the natural labelings $V(C_4)=\{c_1,\ldots, c_4\}$  and $V(P_n)=\{v_1,\ldots, v_n\}$ at $c_1=v_1$. We have 
\[
\mathcal{N}=\{\{c_2,v_1,\ldots,v_n\},\{c_4,v_1,\ldots,v_n\},\{c_2,c_3,c_4,v_1,\ldots,v_{n-1}\}\}.
\]
Hence the structure diagrams for even $n$ are the ones shown in Figure~\ref{fig:kite}. If $n$ is odd, then the parity of the structure classes are all reversed and the nim-values in the diagrams are swapped \cite[Proposition 9.2]{SiebenHypergraph}. Hence
\[
\nim(\DNG(G))=\begin{cases}
2, & \pty(n)=0\\
3, & \pty(n)=1,
\end{cases}
\]
and
\[
\nim(\GEN(G))=\begin{cases}
0, & \pty(n)=0\\
3, & \pty(n)=1.
\end{cases}
\]
\end{example}

%---------------%

\section{Graphs with a unique minimal generating set}

Graphs with a unique minimal generating set are common and relatively easy to analyze. The results of this section are closely related to the idea of a unique geodetic basis of~\cite{HaynesHenningTiller}.

\begin{prop}\label{prop:unique min gen set}
If $\mathcal{G}=\{L\}$, then $\mathcal{N}=\{\{l\}^c \mid l\in L\}$ and $\Phi=L^c$. 
\end{prop}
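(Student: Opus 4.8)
The plan is to prove the two claims about $\mathcal{N}$ and $\Phi$ directly from the assumption that $\mathcal{G} = \{L\}$, using the duality between minimal generating sets and maximal nongenerating sets recorded in Remark~\ref{rem:Nisall}. The key observation is that a set $P$ generates if and only if it contains the unique minimal generating set $L$, that is, $L \subseteq P$. Equivalently, $P$ is nongenerating if and only if $L \not\subseteq P$, which happens if and only if $P$ misses at least one element $l \in L$, i.e. $P \subseteq \{l\}^c$ for some $l \in L$.

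First I would establish that each $\{l\}^c$ is nongenerating: since $L \not\subseteq \{l\}^c$ (as $l \notin \{l\}^c$), the set $\{l\}^c$ cannot contain $L$, so by Remark~\ref{rem:Nisall} it does not generate. Next I would show each $\{l\}^c$ is \emph{maximal} nongenerating. Any set strictly larger than $\{l\}^c$ must equal $V$ (since $\{l\}^c$ is already a coatom in the Boolean lattice $2^V$, obtained by removing the single element $l$), and $V$ generates; hence no proper superset of $\{l\}^c$ is nongenerating. Conversely, I would show every maximal nongenerating set has this form: if $N \in \mathcal{N}$, then $N$ is nongenerating, so $L \not\subseteq N$, meaning there is some $l \in L \setminus N$, whence $N \subseteq \{l\}^c$. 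Since $\{l\}^c$ is itself nongenerating and $N$ is maximal, we conclude $N = \{l\}^c$. This proves $\mathcal{N} = \{\{l\}^c \mid l \in L\}$.

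For the Frattini subset, I would compute $\Phi = \bigcap \mathcal{N} = \bigcap_{l \in L} \{l\}^c$. By De Morgan's law this intersection of complements equals the complement of the union, so
\[
\Phi = \bigcap_{l \in L} (V \setminus \{l\}) = V \setminus \bigcup_{l \in L} \{l\} = V \setminus L = L^c,
\]
which gives the second claim.

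I do not anticipate a serious obstacle here; the result is essentially a reformulation of the generating/nongenerating duality in the special case of a singleton $\mathcal{G}$. The one point requiring mild care is the maximality argument: I must verify that each $\{l\}^c$ is genuinely maximal rather than merely nongenerating, and that distinct $l$ give distinct coatoms so there is no over- or under-counting. This hinges on the trivial but essential fact that the only set properly containing a coatom $\{l\}^c$ is $V$ itself, together with the hypothesis that $V$ generates (which holds since $L \subseteq V$). A subtle edge case worth a sentence is ensuring $L \neq \emptyset$ so that $\mathcal{N}$ is nonempty and $L^c \neq V$; this is automatic because $V$ generates and $\{L\}$ being the family of minimal generating sets forces $L$ to be a nonempty generating set whenever the graph is nontrivial.
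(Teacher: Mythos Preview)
Your proof is correct and follows essentially the same approach as the paper: both arguments rest on the observation that a set generates if and only if it contains $L$, from which the maximal nongenerating sets are exactly the coatoms $\{l\}^c$ for $l\in L$, and the Frattini statement follows by intersecting. Your version simply spells out the maximality step and the De~Morgan computation more explicitly than the paper's terse three-sentence proof.
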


\begin{proof}
A set of vertices $S$ is a generating set if and only if $S$ is a superset of $L$. Hence the maximal nongenerating sets are the subsets of $V$ that miss exactly one element of $L$. The statement about the Frattini subset is an immediate consequence.
\end{proof}

The following result often makes it easy to recognize that a graph has a unique minimal generating set.

\begin{prop}
\label{prop:simplicialGenerate}
If $L$ is a generating set containing only simplicial vertices, then $\mathcal{G}=\{L\}$.
\end{prop}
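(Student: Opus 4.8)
The plan is to leverage Proposition~\ref{prop:simplicial}, which already tells us that every generating set must contain every simplicial vertex. The hypothesis that $L$ consists only of simplicial vertices then forces $L$ to sit inside every generating set, and since $L$ is itself generating this makes $L$ the least generating set under inclusion. The conclusion $\mathcal{G}=\{L\}$ follows immediately.

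In more detail, I would first fix an arbitrary generating set $S$, so $[S]=V$. By Proposition~\ref{prop:simplicial}, $S$ contains every simplicial vertex of $G$. Since each vertex of $L$ is simplicial by hypothesis, this yields $L\subseteq S$. Thus $L$ is contained in \emph{every} generating set of $G$.

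It remains to translate ``contained in every generating set'' into ``unique minimal generating set.'' Because $L$ is generating by assumption, $L$ is in particular a generating set that is contained in all generating sets, so it is the minimum element of the collection of generating sets ordered by inclusion. Any minimal generating set $S'$ therefore satisfies $L\subseteq S'$; but $L$ is itself generating, so minimality of $S'$ forces $S'=L$. Hence $L$ is the unique minimal generating set, i.e.\ $\mathcal{G}=\{L\}$.

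There is no substantive obstacle here: the argument is essentially a one-line deduction from Proposition~\ref{prop:simplicial}, and the only point requiring a moment's care is the final logical step that a generating set contained in all others is automatically the unique minimal one. I would also note in passing that the same reasoning shows $L$ must coincide with the full set of simplicial vertices (combining $L\subseteq\{\text{simplicial vertices}\}$ with the reverse inclusion coming from $L$ being generating), though this observation is not needed for the statement.
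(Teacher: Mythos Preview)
Your proof is correct and follows essentially the same approach as the paper's: both invoke Proposition~\ref{prop:simplicial} to get $L\subseteq A$ for any (minimal) generating set $A$, and then use minimality of $A$ together with the fact that $L$ is generating to conclude $A=L$. Your additional remark that $L$ must equal the full set of simplicial vertices is a nice observation, though as you note it is not needed.
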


\begin{proof}
Let $A \in \mathcal{G}$. Since $A$ is a generating set, $A$ contains all the simplicial vertices by Proposition~\ref{prop:simplicial}, so  $L\subseteq A$. Since $A$ is a minimal generating set and $L$ is a generating set, $A$ cannot be larger than $L$.  Thus $A=L$.
\end{proof}

\begin{example}
The graph shown in Figure~\ref{fig:barP3uK1uK1} has a generating set $L=\{v_3,v_4,v_5\}$ consisting of simplicial vertices so that $\mathcal{G}=\{L\}$.
\end{example}

\begin{figure}[h]
\begin{tikzpicture}[scale=1,auto]
\node (3) at (0,0) [ vert] {\scriptsize $v_3$};
\node (1) at (1,0) [ vert, fill=green] {\scriptsize $v_1$};
\node (2) at (1,1) [ vert, fill=green] {\scriptsize $v_2$};
\node (4) at (0,1) [ vert] {\scriptsize $v_4$};
\node (5) at (2,.5) [ vert] {\scriptsize $v_5$};
\path [b] (4) to (2) to (5) to (1) to (3) to (2) to (1) to (4) to (3);
\end{tikzpicture}

\caption{
\label{fig:barP3uK1uK1}
A graph with its Frattini subset highlighted in green. The simplicial vertices form a generating set.}
\end{figure}

\begin{prop}
\label{prop:uniqueDNG}
If $G$ is a graph such that $\mathcal{G}=\{L\}$, then $\nim(\DNG(G))=1-\pty(V)$.
\end{prop}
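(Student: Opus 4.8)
The plan is to combine the explicit description of the maximal nongenerating family supplied by Proposition~\ref{prop:unique min gen set} with the parity criterion of Proposition~\ref{prop:all-same}. The entire argument is essentially a lookup followed by an elementary parity computation, so I expect no substantive obstacle.

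First I would apply Proposition~\ref{prop:unique min gen set}. Since $\mathcal{G}=\{L\}$, that result gives $\mathcal{N}=\{\{l\}^c \mid l\in L\}$, where $\{l\}^c=V\setminus\{l\}$. In other words, every maximal nongenerating set is obtained by deleting exactly one vertex of $L$ from the full vertex set $V$.

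Next I would compute the parity of each such set. For $N=V\setminus\{l\}$ we have $|N|=|V|-1$, so $\pty(N)=\pty(|V|-1)=1-\pty(V)$, independently of which $l\in L$ is removed. Hence every member of $\mathcal{N}$ has the same parity $r:=1-\pty(V)$, which is precisely the hypothesis needed for the avoidance-game parity result. I would then invoke Proposition~\ref{prop:all-same} to conclude that $\nim(\DNG(G))=r=1-\pty(V)$.

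The only point requiring any care is the bookkeeping $\pty(|V|-1)=1-\pty(V)$ and the verification that the common-parity hypothesis of Proposition~\ref{prop:all-same} is genuinely met; both are immediate once the shape of $\mathcal{N}$ is known. All of the real work has already been done in Proposition~\ref{prop:unique min gen set}, so this statement is a direct corollary.
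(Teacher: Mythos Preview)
Your proof is correct and follows essentially the same approach as the paper: the paper's one-line proof simply notes that $|N|=|V|-1$ for all $N\in\mathcal{N}$ and invokes Proposition~\ref{prop:all-same}, while you additionally cite Proposition~\ref{prop:unique min gen set} to justify that shape of $\mathcal{N}$ explicitly.
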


\begin{proof}
This follows from Proposition~\ref{prop:all-same} since $|N|=|V|-1$ for all $N\in\mathcal{N}$. 
\end{proof}

\begin{prop}
\label{prop:uniqueGEN}
If $G$ is a graph such that $\mathcal{G}=\{L\}$, then $\nim(\GEN(G))=\pty(V)$.
\end{prop}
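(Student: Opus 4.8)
The plan is to exploit Proposition~\ref{prop:unique min gen set}, which tells us that $\mathcal{N}=\{\{l\}^c\mid l\in L\}$, so that a set $P$ of vertices generates if and only if $L\subseteq P$. Consequently, in $\GEN(G)$ the only feature of a position $P$ that can trigger termination is whether every vertex of $L$ has been selected. I would therefore reduce the game to a two-parameter \emph{token game}: to a position $P$ associate the pair $(s,o)$, where $s:=|L\setminus P|$ counts the unselected vertices of the generating set and $o:=|L^c\setminus P|$ counts the unselected vertices of $\Phi=L^c$. The first step is to prove, by induction on $s+o$, that any two positions sharing the same pair $(s,o)$ determine isomorphic subgames, so that $\nim(P)$ depends only on $(s,o)$; call this common value $N(s,o)$. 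This works because from a nonterminal position ($s\ge 1$) the options are exactly ``select a vertex of $L$,'' which sends $(s,o)\mapsto(s-1,o)$, and ``select a vertex of $L^c$,'' which sends $(s,o)\mapsto(s,o-1)$ and is available precisely when $o\ge 1$.

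Next I would record the recurrence $N(0,o)=0$ for all $o$ (such positions are terminal) and, for $s\ge 1$,
\[
N(s,o)=\mex\bigl(\{N(s-1,o)\}\cup\{\,N(s,o-1)\mid o\ge 1\,\}\bigr).
\]
The main computation is then the claim that $N(s,o)=\pty(s+o)$ for every $s\ge 2$, proved by induction on $s+o$; the inductive step splits into the cases $o=0$ and $o\ge 1$, and in each case the two relevant predecessors have the opposite parity $\pty(s+o-1)$, so the $\mex$ returns $\pty(s+o)$. Since the starting position of $\GEN(G)$ is $\emptyset$, which carries the pair $(s,o)=(|L|,|V|-|L|)$, this immediately yields $\nim(\GEN(G))=N(|L|,|V|-|L|)=\pty(|V|)$, as desired; note in particular that the value always lies in $\{0,1\}$, consistent with the outcome-theoretic meaning of the nim-number.

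The subtlety---and the step I expect to be the main obstacle---lies at the low boundary rows of the recurrence. A direct computation shows $N(1,o)=1$ when $o$ is even but $N(1,o)=2$ when $o$ is odd, so the clean formula $N(s,o)=\pty(s+o)$ genuinely fails on the row $s=1$, and this anomalous value $2$ must be prevented from propagating into the row $s=2$ where the induction is anchored. The resolution is the observation that the row $s=1$ never occurs as a \emph{starting} position: a single vertex $v$ satisfies $[\{v\}]=\{v\}$, so $\{v\}$ generates only in the trivial graph, and hence for any nontrivial $G$ we have $|L|\ge 2$, meaning $s$ begins at $|L|\ge 2$. One still has to feed the $s=1$ values correctly into the base case $s=2$: for $o\ge 1$ we have $N(2,o)=\mex\{N(1,o),N(2,o-1)\}$, which evaluates to $\mex\{1,1\}=0$ for even $o$ and $\mex\{2,0\}=1$ for odd $o$, exactly $\pty(2+o)$, so the spurious $2$ is absorbed by the $\mex$ and does no harm; the case $o=0$ gives $N(2,0)=\mex\{N(1,0)\}=\mex\{1\}=0=\pty(2)$. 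The trivial graph is handled separately: there $|V|=1$, $(s,o)=(1,0)$, and $N(1,0)=1=\pty(1)$, matching the statement.
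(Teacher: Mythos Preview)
Your argument is correct and self-contained. The reduction to the two-parameter token game is sound: since $\mathcal{G}=\{L\}$, a position $P$ is terminal exactly when $L\subseteq P$, and the multiset of option-parameters from a nonterminal $(s,o)$ is precisely $s$ copies of $(s-1,o)$ together with $o$ copies of $(s,o-1)$, so the subgame (and in particular the nim-number) depends only on $(s,o)$. Your boundary analysis is the delicate part and you handle it correctly: the anomalous row $N(1,o)\in\{1,2\}$ is absorbed at $s=2$ because $\mex\{1,1\}=0$ and $\mex\{2,0\}=1$, after which the parity pattern propagates cleanly; and $|L|\ge 2$ for every nontrivial graph rules out starting in the bad row, with the trivial graph checked separately.

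The paper takes a very different route: it simply invokes \cite[Proposition~7.2]{SiebenHypergraph}, a general result for building hypergraph games with a unique minimal winning set, which already gives $\nim(\GEN(G))=\pty(V)$ except in the case $|L|=1$, $\pty(V)=0$, where the value would be $2$; the paper then observes that this exceptional case is vacuous for graphs. So the paper's proof is a one-line citation plus an exclusion argument, whereas yours is an elementary direct computation that does not depend on the external structure-diagram machinery. Your approach has the advantage of being fully self-contained and of making transparent exactly where the potential value $2$ comes from and why it does not survive to the starting position; the paper's approach is shorter on the page but pushes the real work into the cited reference.
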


\begin{proof}
By \cite[Proposition~7.2]{SiebenHypergraph}, 
\[
\nim(\GEN(G))=\begin{cases}
2, & |L|=1 \text{ and }\pty(V)=0\\
\pty(V), & \text{otherwise}.
\end{cases}
\]
A singleton set can only be a generating set if $G$ is the trivial graph but for the trivial graph $\pty(V)=1$. So the nim-value can never be $2$. 
\end{proof}

\begin{rem}
By using \cite[Propositions~7.6, 7.8]{SiebenHypergraph}, Propositions~\ref{prop:uniqueDNG} and~\ref{prop:uniqueGEN} can be generalized to graphs for which the family $\complement(\mathcal{N})$ of complements of the sets in $\mathcal{N}$ contains pairwise disjoint but not necessarily singleton sets. Figure~\ref{fig:disj} shows such a graph with 
\[
\complement(\mathcal{N})=\{\{v_1\},\{v_4\},\{v_5,v_6\},\{v_7,v_8\}\}.
\]
Recognizing these graphs does not seem easy, so we do not pursue this generalization further.
\end{rem}

\begin{figure}[h]
\begin{tikzpicture}[scale=1,auto]
\node (1) at (0,0) [ vert] {\scriptsize $v_1$};
\node (2) at (1,0) [ vert, fill=green] {\scriptsize $v_2$};
\node (3) at (2,0) [ vert, fill=green] {\scriptsize $v_3$};
\node (4) at (3,0) [ vert] {\scriptsize $v_4$};
\node (5) at (1,1) [ vert] {\scriptsize $v_5$};
\node (6) at (2,1) [ vert] {\scriptsize $v_6$};
\node (7) at (1,-1) [ vert] {\scriptsize $v_7$};
\node (8) at (2,-1) [ vert] {\scriptsize $v_8$};
\path [b] (1) to (2) to (3) to (4);
\path [b] (5) to (6) to (3) to (8) to (7) to (2) to (5);
\draw[rounded corners] (.5,.6) -- (.5,1.4) -- (2.5,1.4) -- (2.5,.6) -- cycle;
\draw[rounded corners] (.5,-.6) -- (.5,-1.4) -- (2.5,-1.4) -- (2.5,-.6) -- cycle;
\draw[rounded corners] (-.4,-.4) -- (-.4,.4) -- (0.4,.4) -- (.4,-.4) -- cycle;
\draw[rounded corners] (2.6,-.4) -- (2.6,.4) -- (3.4,.4) -- (3.4,-.4) -- cycle;
\end{tikzpicture}

\caption{
\label{fig:disj}
A graph with pairwise disjoint sets in $\complement(\mathcal{N})$ indicated by rectangles.}
\end{figure}

\subsection{Complete split graphs}
% https://math.stackexchange.com/questions/4014736/terminology-for-the-join-of-a-complete-graph-and-the-complement-of-a-complete-gr
Recall that the \emph{join} of two disjoint graphs is the graph in which we add each possible edge from one graph to the other. A \emph{complete split graph} is the join $K_m+\overline{K}_n$ of the complete graph $K_m$ and the complement graph $\overline{K}_n$.  The geodetic closure and convex hull of $S \subseteq V(K_m+\overline{K}_n)$ are the same since $[S]=S=I[S]$ if $|S \cap \overline{K}_n| \leq 1$, and $[S]=K_m \cup S=I[S]$ otherwise.  Thus, the results in this subsection apply to the games in~\cite{HaynesHenningTiller}. 

\begin{example}
The diamond graph shown in Figure~\ref{fig:diamond2} is the complete split graph $K_2+\overline{K}_2$.
\end{example}

A complete split graph has a unique minimal generating set.

\begin{prop}
If $G=K_m+\overline{K}_n$ and $n\ge 2$, then  $\mathcal{G}=\{V(\overline{K}_n)\}$.
\end{prop}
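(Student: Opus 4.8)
The plan is to apply Proposition~\ref{prop:simplicialGenerate}, which reduces the claim to showing that $V(\overline{K}_n)$ is a generating set consisting entirely of simplicial vertices. So I would carry out two verifications: first, that each vertex of the independent part $\overline{K}_n$ is simplicial in $G=K_m+\overline{K}_n$; second, that $[V(\overline{K}_n)]=V$.

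For the simplicial claim, fix a vertex $u\in V(\overline{K}_n)$. Since $G$ is the join, $u$ is adjacent precisely to the $m$ vertices of $K_m$ (the other vertices of $\overline{K}_n$ are nonadjacent to $u$). The neighborhood of $u$ is therefore exactly $V(K_m)$, and the subgraph induced on $V(K_m)$ is complete by definition of $K_m$. Hence $u$ is simplicial. This handles every vertex of $\overline{K}_n$.

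For the generating claim, I would observe that with $n\ge 2$ the set $S=V(\overline{K}_n)$ contains at least two distinct vertices $u,w$ of the independent part, and these are nonadjacent, so any geodesic between them has length at least $2$. Because each vertex of $K_m$ is adjacent to both $u$ and $w$, the paths $u,x,w$ for $x\in V(K_m)$ are all geodesics of length $2$, placing every vertex of $K_m$ into the geodetic closure $I[S]$. Thus $[S]\supseteq I[S]\supseteq V(K_m)\cup V(\overline{K}_n)=V$, so $S$ is generating. (This is consistent with the formula $[S]=K_m\cup S$ recorded just before the statement for $|S\cap\overline{K}_n|\ge 2$.)

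The argument is essentially immediate once the join structure is unpacked, so there is no serious obstacle; the only point requiring a little care is the edge case. If $m=0$ the graph is the edgeless graph $\overline{K}_n$, where no nontrivial geodesics exist and the unique generating set is the whole vertex set, which still equals $V(\overline{K}_n)$, so the conclusion holds trivially; and the hypothesis $n\ge 2$ is exactly what guarantees $S$ has two nonadjacent vertices so that the length-$2$ geodesics exist and $K_m$ is pulled in. I would state the two verifications cleanly and invoke Proposition~\ref{prop:simplicialGenerate} to conclude $\mathcal{G}=\{V(\overline{K}_n)\}$.
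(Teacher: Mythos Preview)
Your proof is correct and follows exactly the paper's approach: verify that the vertices of $\overline{K}_n$ are simplicial in $G$, that $V(\overline{K}_n)$ is generating (using $n\ge 2$), and then invoke Proposition~\ref{prop:simplicialGenerate}. You simply spell out in more detail what the paper asserts in two sentences.
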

\begin{proof}
The vertices in $\overline{K}_n$ are simplicial in $G$. Since $n\ge 2$, $V(\overline{K}_n)$ is a generating set for $V(G)$. Hence the result follows from Proposition~\ref{prop:simplicialGenerate}.
\end{proof}

The following is an immediate consequence of Propositions~\ref{prop:uniqueDNG} and \ref{prop:uniqueGEN}.

\begin{corollary}
\label{cor:split}
If $G=K_m+\overline{K}_n$ and $n\ge 2$, then $\nim(\DNG(G))=1-\pty(m+n)$.
and
$\nim(\GEN(G))=\pty(m+n)$.
\end{corollary}

\subsection{Corona graphs} 
The \emph{corona} $H \circ K_1$ is formed from the graph $H$ by adding for each $v \in V(H)$ a new vertex $v'$ and a new edge $vv'$. We present results similar to \cite[Corollary 7, 15]{HaynesHenningTiller}.  

\begin{prop}
If $H$ is a nontrivial graph and $G=H \circ K_1$, then $\mathcal{G}=\{L\}$, where $L=\{v'\mid v\in V(H)\}$.
\end{prop}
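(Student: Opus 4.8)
The plan is to show that $L = \{v' \mid v \in V(H)\}$ is a generating set consisting entirely of simplicial vertices and then invoke Proposition~\ref{prop:simplicialGenerate} to conclude that $\mathcal{G} = \{L\}$. The structure of the corona makes both facts accessible: each added vertex $v'$ is a leaf attached only to $v$, so its unique neighbor is $v$, and the subgraph induced by a single vertex is trivially complete. Hence every $v'$ is simplicial, which disposes of the simpliciality requirement immediately.

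The main work is to verify that $L$ is generating, i.e.\ that $[L] = V(G)$. First I would observe that for each original vertex $v \in V(H)$, the pendant vertex $v'$ is adjacent to $v$, so $v$ lies on the (length-one) geodesic between $v'$ and $v$; more usefully, I want to recover each $v$ from the closure of $L$. Since $H$ is nontrivial and I may assume it is connected for the relevant cases (or argue componentwise), pick any neighbor $w$ of $v$ in $H$. Then $v', w'$ are both in $L$, and I would check that the shortest path between $v'$ and $w'$ passes through $v$ and $w$: indeed any path from $v'$ must first go to $v$, and any path into $w'$ must come from $w$, so the geodesic is $v', v, \ldots, w, w'$, forcing $v, w \in I[L] \subseteq [L]$. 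Carrying this out for all adjacent pairs shows every original vertex of $H$ enters the geodetic closure, and hence $V(H) \cup L = V(G) \subseteq [L]$.

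The step I expect to require the most care is the geodesic analysis establishing that $v$ actually appears on a shortest $v'$-to-$w'$ path, rather than merely on \emph{some} path. The key point is that the pendant edges are forced: the only way to enter or leave a leaf $v'$ is through $v$, so the distance $d(v', w') = d(v, w) + 2$ and every geodesic between the leaves is obtained by prepending $v'$ and appending $w'$ to a $v$-$w$ geodesic in $H$. This pins $v$ (and $w$) onto a genuine geodesic. Once this is secured, the simpliciality of the $v'$ is routine, and Proposition~\ref{prop:simplicialGenerate} yields $\mathcal{G} = \{L\}$ directly, completing the argument. An alternative, perhaps cleaner, route avoids geodesics between leaves entirely: one shows directly that any generating set must contain all simplicial vertices by Proposition~\ref{prop:simplicial}, so $L \subseteq A$ for every $A \in \mathcal{G}$, and separately that $L$ itself generates; I would likely present whichever of these two closure computations is shorter.
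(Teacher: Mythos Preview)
Your proposal is correct and follows essentially the same approach as the paper: verify that $L$ consists of simplicial vertices, verify that $L$ generates, and invoke Proposition~\ref{prop:simplicialGenerate}. The paper's proof is a one-line assertion of both facts without the geodesic analysis you supply, so you are simply filling in details the paper leaves implicit.
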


\begin{proof}
The result follows from Proposition~\ref{prop:simplicialGenerate} since $L$ is a generating set containing simplicial vertices.
\end{proof}

Note that the previous proposition also holds for geodetic closures, hence the geodetic closure and the convex hull games are the same for corona graphs.

\begin{prop}
If $H$ is a nontrivial graph and $G=H \circ K_1$, then $\nim(\DNG(G))=1$.
\end{prop}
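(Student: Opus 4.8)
The plan is to reduce the entire statement to a parity count via Proposition~\ref{prop:uniqueDNG}. The preceding proposition already supplies the crucial structural fact: the corona $G = H \circ K_1$ has a unique minimal generating set $\mathcal{G} = \{L\}$, where $L = \{v' \mid v \in V(H)\}$ is the set of pendant vertices. Consequently the hypothesis of Proposition~\ref{prop:uniqueDNG} is satisfied, so that $\nim(\DNG(G)) = 1 - \pty(V)$, and the whole problem collapses to determining $\pty(V)$.

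First I would count the vertices of $G$. By the definition of the corona, each vertex $v \in V(H)$ contributes itself together with exactly one new pendant vertex $v'$, and these pendant vertices are all distinct. Hence $|V(G)| = 2\,|V(H)|$, which is even for every choice of $H$. Therefore $\pty(V) = 0$.

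Substituting into the formula from Proposition~\ref{prop:uniqueDNG} yields $\nim(\DNG(G)) = 1 - 0 = 1$, as claimed. There is no real obstacle in this argument: the content lies entirely in the two earlier results (the unique minimal generating set computation for coronas and Proposition~\ref{prop:uniqueDNG}), and the only genuinely new observation is that the corona construction doubles the vertex count and so always produces a graph of even order. The nontriviality of $H$ is needed only to guarantee that the preceding proposition applies, so that $\mathcal{G} = \{L\}$ indeed holds.
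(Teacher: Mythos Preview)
Your proof is correct and essentially the same as the paper's. The paper appeals directly to Proposition~\ref{prop:all-same} by noting that every $N\in\mathcal{N}$ has size $2|V(H)|-1$, which is odd; you instead route through Proposition~\ref{prop:uniqueDNG} and compute $\pty(V)=0$, but since Proposition~\ref{prop:uniqueDNG} is itself an immediate consequence of Proposition~\ref{prop:all-same} via the identity $|N|=|V|-1$, the two arguments are the same parity count packaged slightly differently.
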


\begin{proof}
The result follows from Proposition~\ref{prop:all-same} since $|N|=2|V(H)|-1$ for all $N\in\mathcal{N}$.
\end{proof}

\begin{prop}
If $H$ is a nontrivial graph and $G=H \circ K_1$, then $\nim(\GEN(G))=0$.
\end{prop}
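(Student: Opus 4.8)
The plan is to invoke Proposition~\ref{prop:uniqueGEN}, in exact parallel with how the companion statement for $\DNG$ was deduced from Proposition~\ref{prop:uniqueDNG}. The immediately preceding proposition already establishes that $\mathcal{G}=\{L\}$ for the corona $G=H\circ K_1$, where $L=\{v'\mid v\in V(H)\}$ is the set of pendant vertices. Hence the single hypothesis $\mathcal{G}=\{L\}$ required by Proposition~\ref{prop:uniqueGEN} is satisfied, and that proposition immediately gives $\nim(\GEN(G))=\pty(V)$. This reduces the entire problem to a parity count of the vertex set of $G$.

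The remaining step is to compute $\pty(V)$. By the definition of the corona, each vertex $v$ of $H$ contributes its original copy together with exactly one new pendant vertex $v'$, so $|V(G)|=2|V(H)|$. This is even regardless of the size of $H$, so $\pty(V)=0$ and therefore $\nim(\GEN(G))=0$, as claimed.

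There is essentially no obstacle here: the only thing that needs verifying is that the corona construction doubles the vertex count, which is immediate from the definition. It is worth noting the contrast with the $\DNG$ case, where the relevant quantity was the uniform cardinality $|N|=2|V(H)|-1$ of the maximal nongenerating sets, an odd number forcing $\nim(\DNG(G))=1$ via Proposition~\ref{prop:all-same}; in the achievement game the controlling parity is instead that of $V$ itself, which is even.
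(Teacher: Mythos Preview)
Your proof is correct and follows exactly the paper's approach: invoke Proposition~\ref{prop:uniqueGEN} (using the already-established fact that $\mathcal{G}=\{L\}$) to get $\nim(\GEN(G))=\pty(V(G))$, and then observe that $|V(G)|=2|V(H)|$ is even. The paper's proof is the same, just compressed to a single sentence.
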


\begin{proof}
Proposition~\ref{prop:uniqueGEN} implies that $\nim(\GEN(G))=\pty(V(G))=0$ since $|V(G)|=2|V(H)|$.
\end{proof}

\begin{example}\label{example:coronacounterexample}
Consider the corona graph $G=K_{2,3}\circ K_1$. The geodetic closure and the convex hull are not the same operators in $G$ as can be seen using Figure~\ref{fig:K23}. But $L=\{v'\mid v\in V(K_{2,3})\}$ is the unique minimal generating set for both the geodetic closure and the convex hull operators. Hence the geodetic closure and the convex hull avoidance and achievement games are the same on $G$.
\end{example}

\subsection{Block graphs}
\ 
%\note{
%N: This appears in Haynes but called complete block graphs. See 
%{\tt https://en.wikipedia.org/wiki/Block\_graph} Perhaps reference a graph theory book like West.
%}

A \emph{block} of a graph is a maximal connected subgraph without a cut vertex. A \emph{block graph} is a graph whose blocks are complete graphs. The connected components of a block graph are block graphs. A connected block graph can be built from nontrivial complete graphs using $1$-clique sums. Because of this, a block graph is also referred to as a \emph{clique tree}.  The simplicial vertices form the unique minimal generating set for both the geodetic closure and the convex hull, so the results in this subsection apply to the games in~\cite{HaynesHenningTiller}.

\begin{example}
Figure~\ref{fig:block} shows a block graph with seven blocks and six cut vertices. The graph can be built from $K_4$, two copies of $K_3$, and four copies of $K_2$ using 1-clique sums. The set $L=\{v_1,v_5,v_6,v_7,v_{10},v_{12}\}$ of simplicial vertices is the unique minimal generating set. The set of cut vertices is $\{v_2,v_3,v_4,v_8,v_9,v_{11}\}$. Note that each cut vertex is contained on a geodesic between two simplicial vertices. For example, $v_9$ is contained on the geodesic between $v_1$ and $v_{12}$.
\end{example}

\begin{figure}[h]
\begin{tikzpicture}[scale=.8,auto]
\node (1) at (-.3,0) [vert] {\scriptsize $v_1$};
\node (2) at (1,0) [vert, fill=green] {\scriptsize $v_2$};
\node (3) at (-1,1) [vert, fill=green] {\scriptsize $v_3$};
\node (4) at (-1,-1) [vert, fill=green] {\scriptsize $v_4$};
\node (5) at (-2,-.4) [vert] {\scriptsize $v_5$};
\node (6) at (-2,-1.6) [vert] {\scriptsize $v_6$};
\node (7) at (-2,1) [vert] {\scriptsize $v_7$};
\node (8) at (2,.6) [vert, fill=green] {\scriptsize $v_8$};
\node (9) at (2,-.6) [vert, fill=green] {\scriptsize $v_9$};
\node (10) at (3,.6) [vert] {\scriptsize $v_{10}$};
\node (11) at (3,-.6) [vert, fill=green] {\scriptsize $v_{11}$};
\node (12) at (4,-.6) [vert] {\scriptsize $v_{12}$};
\path [b] (1) to (2) to (3) to (4) to (2);
\path [b] (3) to (1) to (4);
\path [b] (4) to (5) to (6) to (4);
\path [b] (3) to (7);
\path [b] (2) to (8) to (9) to (2);
\path [b] (8) to (10);
\path [b] (9) to (11);
\path [b] (11) to (12);
\end{tikzpicture}

\caption{
\label{fig:block}
A block graph with seven blocks and six cut vertices.}
\end{figure}

\begin{prop}
\label{prop:NforBlock}
If $G$ is a block graph, then the set $L$ of simplicial vertices is a unique minimal generating set.
\end{prop}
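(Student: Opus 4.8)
If $G$ is a block graph, then the set $L$ of simplicial vertices is a unique minimal generating set.

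**My plan.** The plan is to invoke Proposition~\ref{prop:simplicialGenerate}, which says that any generating set consisting only of simplicial vertices is automatically the unique minimal generating set. Since $L$ consists entirely of simplicial vertices by definition, the whole burden of the proof reduces to showing that $L$ actually \emph{generates}, i.e. that $[L]=V$. So the single thing I need to establish is: in a block graph, the convex hull of the set of all simplicial vertices is the entire vertex set. Once that is in hand, $\mathcal{G}=\{L\}$ follows immediately.

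**Reducing to connected graphs and then to the structure of blocks.** First I would reduce to the connected case: the connected components of a block graph are themselves block graphs (as noted in the text), a vertex is simplicial in $G$ exactly when it is simplicial in its component, and the convex hull respects components, so it suffices to prove $[L]=V$ for a connected block graph. For a connected block graph I would use the clique-tree description: $G$ is a $1$-clique sum of nontrivial complete graphs, so it has a block-tree structure. The key combinatorial fact I would exploit is that every non-cut vertex lying in exactly one block is simplicial, and a leaf block of the clique tree (a block meeting the rest of the graph in a single cut vertex) contributes at least one simplicial vertex, namely any of its vertices other than that cut vertex.

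**The main induction.** The heart of the argument is showing every vertex lies in $[L]$, and I expect this to be the main obstacle. I would argue by induction on the number of blocks. In the base case $G$ is a single complete graph $K_m$, where every vertex is simplicial, so $L=V$ trivially. For the inductive step, pick a leaf block $B$ attached to the rest of the graph $G'$ at a cut vertex $c$; then $B\setminus\{c\}$ consists of simplicial vertices of $G$, and $G'$ is a block graph with fewer blocks whose simplicial-vertex set is $L\setminus(B\setminus\{c\})$ possibly together with $c$ if $c$ becomes simplicial in $G'$. The delicate point is recovering the cut vertex $c$: I must show $c\in[L]$. For this I would take any simplicial vertex $u$ in the leaf block $B$ (so $u\neq c$) and any simplicial vertex $w$ on the far side of $c$ in $G'$; since $c$ is a cut vertex separating $u$ from $w$, every geodesic from $u$ to $w$ must pass through $c$, forcing $c\in I[\{u,w\}]\subseteq[L]$. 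Having placed $c$ in $[L]$, the convex hull $[L]$ contains $c$ together with all simplicial vertices of $G'$, which by the inductive hypothesis generate $G'$, so $[L]\supseteq V(G')$; and $[L]$ also contains all of $B\setminus\{c\}$ together with $c$, hence all of $B$ since $B$ is complete. Combining these gives $[L]=V$. The only subtlety to be careful about is the parsing of simpliciality across the gluing vertex and ensuring a far-side simplicial vertex $w\neq c$ exists, which follows because $G'$ is a nontrivial connected block graph and therefore has a simplicial vertex distinct from $c$.
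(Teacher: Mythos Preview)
Your proposal is correct and follows essentially the same approach as the paper: both reduce immediately to Proposition~\ref{prop:simplicialGenerate}, so the only task is to show that the simplicial vertices generate, which in a block graph amounts to showing every cut vertex lies on a geodesic between two simplicial vertices. The paper states this last fact in a single sentence without further justification, whereas you supply a careful leaf-block induction to prove it; your argument is more detailed but the underlying idea (pick simplicial vertices on opposite sides of a cut vertex) is the same.
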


\begin{proof}
Every cut vertex is contained on some geodesic connecting two simplicial vertices. Hence the simplicial vertices form a generating set and the result follows from Proposition~\ref{prop:simplicialGenerate}.
\end{proof}

An alternate approach for the proof would be to use Proposition~\ref{prop:1cliqueSum} for the connected components followed by the use of Proposition~\ref{prop:maximal disjoint union}. Note that the previous proposition also holds for geodetic closures, hence the geodetic closure and the convex hull games are the same for block graphs.

\begin{corollary}
For block graphs, $\Phi$ consists of the cut vertices.
\end{corollary}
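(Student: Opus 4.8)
The claim is that for a block graph $G$, the Frattini subset $\Phi$ consists precisely of the cut vertices. I would prove this as a corollary to Proposition~\ref{prop:NforBlock}, which establishes that the simplicial vertices form the unique minimal generating set $L$. The plan is to combine this with the structural description of the Frattini subset for graphs with a unique minimal generating set given in Proposition~\ref{prop:unique min gen set}, which tells us that $\Phi = L^c$.

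\textbf{Key steps.} First I would invoke Proposition~\ref{prop:NforBlock} to conclude that $\mathcal{G} = \{L\}$ where $L$ is the set of simplicial vertices of $G$. Second, I would apply Proposition~\ref{prop:unique min gen set} to obtain $\Phi = L^c = V \setminus L$; that is, $\Phi$ consists of exactly the non-simplicial vertices. The remaining task is therefore to show that in a block graph the non-simplicial vertices are precisely the cut vertices. This reduces the corollary to a purely structural fact about block graphs, independent of the game theory.

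\textbf{The structural fact.} I would argue both inclusions. If $v$ is a cut vertex, then $v$ cannot be simplicial: removing $v$ disconnects the graph into components, so $v$ has neighbors $r$ and $s$ in two different components, and $r,s$ are not adjacent (any $r$--$s$ path must pass through $v$), so the neighborhood of $v$ is not complete. Conversely, if $v$ is not a cut vertex, I would show $v$ is simplicial. In a block graph, every edge lies in a unique maximal complete subgraph (block). A non-cut vertex lies in exactly one block: if it belonged to two distinct blocks, it would be a cut vertex separating them, since distinct blocks meet in at most the cut vertex shared by the 1-clique-sum construction. Hence all neighbors of a non-cut vertex lie in the single complete block containing it, and are therefore pairwise adjacent, making $v$ simplicial.

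\textbf{Main obstacle.} The game-theoretic half is immediate from the cited propositions, so the only real content is the equivalence ``non-simplicial $\iff$ cut vertex'' in a block graph. The delicate point is verifying that a non-cut vertex belongs to a single block, which rests on the fact that in the clique-tree structure two distinct blocks share at most one vertex and that shared vertex is necessarily a cut vertex. I would lean on the description of a connected block graph as a $1$-clique-sum of complete graphs (the clique tree) to make this precise, treating the general case component by component via the disjoint-union decomposition. This argument is short but is where care is needed to avoid hand-waving about the block structure.
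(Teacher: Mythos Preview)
Your proposal is correct and follows exactly the reasoning the paper intends: the paper states this corollary with no proof, treating it as immediate from Proposition~\ref{prop:NforBlock} together with Proposition~\ref{prop:unique min gen set}, so that $\Phi = L^c$ with $L$ the set of simplicial vertices. You have simply made explicit the standard block-graph fact that the non-simplicial vertices are precisely the cut vertices, which the paper takes for granted.
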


\begin{prop}
If $G$ is a block graph, then $\nim(\DNG(G))=1-\pty(V)$.
\end{prop}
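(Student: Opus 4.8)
The plan is to chain together two results already in hand, so the argument is essentially a one-step composition. First I would invoke Proposition~\ref{prop:NforBlock}, which tells us that for a block graph $G$ the set $L$ of simplicial vertices forms the unique minimal generating set, i.e.\ $\mathcal{G}=\{L\}$. With that structural fact secured, I would simply apply Proposition~\ref{prop:uniqueDNG}, whose hypothesis $\mathcal{G}=\{L\}$ is now met; its conclusion is exactly $\nim(\DNG(G))=1-\pty(V)$, which is the desired statement.

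If one preferred a self-contained derivation rather than citing Proposition~\ref{prop:uniqueDNG} as a black box, I would unwind its proof. Once $\mathcal{G}=\{L\}$ is established, Proposition~\ref{prop:unique min gen set} gives $\mathcal{N}=\{\{l\}^c\mid l\in L\}$, so every maximal nongenerating set equals $V\setminus\{l\}$ for some simplicial vertex $l$ and therefore has cardinality $|V|-1$. Hence every set in $\mathcal{N}$ shares the same parity $r=\pty(|V|-1)=1-\pty(V)$, and Proposition~\ref{prop:all-same} then forces $\nim(\DNG(G))=r=1-\pty(V)$.

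I do not anticipate any genuine obstacle, since all of the real content lives in Proposition~\ref{prop:NforBlock}. The only point meriting a moment's care is that a block graph need not be connected; however, Proposition~\ref{prop:NforBlock} is stated for block graphs with no connectedness hypothesis, so its conclusion---and hence the parity count above---applies verbatim, and no separate treatment of the disconnected case is required.
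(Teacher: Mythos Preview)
Your proposal is correct and matches the paper's approach. The paper's proof simply cites Proposition~\ref{prop:NforBlock} and Proposition~\ref{prop:all-same}; your first route inserts Proposition~\ref{prop:uniqueDNG} as an intermediary, while your unwound version is exactly the paper's argument, so the content is the same.
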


\begin{proof}
The result follows from Propositions~\ref{prop:NforBlock} and \ref{prop:all-same}.
\end{proof}

\begin{prop}
If $G$ is a block graph, then $\nim(\GEN(G))=\pty(V)$.
\end{prop}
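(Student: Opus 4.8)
The plan is to obtain this statement as an immediate corollary of the two preceding propositions, exactly mirroring the derivation of the companion avoidance result $\nim(\DNG(G))=1-\pty(V)$. First I would invoke Proposition~\ref{prop:NforBlock}, which guarantees that for a block graph $G$ the set $L$ of simplicial vertices is the unique minimal generating set, so that $\mathcal{G}=\{L\}$. With this structural fact established, the hypothesis of Proposition~\ref{prop:uniqueGEN} is met, and I would apply it directly to conclude $\nim(\GEN(G))=\pty(V)$.

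The two steps, in order, are therefore: cite Proposition~\ref{prop:NforBlock} to supply the singleton family $\mathcal{G}=\{L\}$, then cite Proposition~\ref{prop:uniqueGEN} to convert this into the nim-number formula. No case analysis is needed at this stage, since Proposition~\ref{prop:uniqueGEN} has already absorbed the only delicate point: the anomalous value $2$ arising when $|L|=1$ cannot occur, because a singleton generating set forces $G$ to be the trivial graph, for which $\pty(V)=1$ in any case.

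I do not expect a genuine obstacle here. All of the substantive work has already been carried out — the fact that block graphs possess a unique minimal generating set (Proposition~\ref{prop:NforBlock}, which rests on Proposition~\ref{prop:simplicialGenerate} and the observation that every cut vertex lies on a geodesic between two simplicial vertices) and the general nim-number computation for graphs with a unique minimal generating set (Proposition~\ref{prop:uniqueGEN}). Consequently the present statement is a one-line consequence, and the only thing to verify is that the hypotheses of Proposition~\ref{prop:uniqueGEN} are satisfied, which Proposition~\ref{prop:NforBlock} provides.
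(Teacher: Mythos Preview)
Your proposal is correct and matches the paper's proof exactly: the paper simply writes that the result follows from Propositions~\ref{prop:NforBlock} and~\ref{prop:uniqueGEN}. Your additional remark about the $|L|=1$ case is accurate but unnecessary here, since that point was already handled inside the proof of Proposition~\ref{prop:uniqueGEN}.
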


\begin{proof}
The result follows from Propositions~\ref{prop:NforBlock} and \ref{prop:uniqueGEN}.
\end{proof}

Block graphs include some basic graph families.

\begin{example}
The complete graph $K_n$ is a block graph with a single block. Every vertex is a simplicial vertex, so $\mathcal{G}=\{V\}$ and $\Phi=\emptyset$.
\end{example}

\begin{example}
A generalization of the windmill graphs is the block graph  $G=\Wd(\vec{n})$ in which $\vec{n} = (n_1, n_2, \ldots, n_\ell) \in  \mathbb{N}_{\ge 2}^\ell$ for $\ell \ge 2$ and we glue together complete graphs $K_{n_1}, \ldots, K_{n_\ell}$ at a common vertex $c$. In this case, $|V|=1-\ell + \sum_{i=1}^\ell n_i.$ The Frattini subset is $\Phi=\{c\}$. Note that when all entries in $\vec{n}$ are equal to $n$, we obtain the windmill graph $\Wd(n,\ell)$ in which $\ell$ copies of $K_n$ are glued together. 
\end{example}

\begin{example}\label{ex:trees are block graphs}
Forest graphs are block graphs. The simplicial vertices are the leaves. The Frattini subset consists of the vertices that are not leaves.  The path graph $P_n$ and the star graph $K_{1,n}$ are trees and hence block graphs.
\end{example}

%---------------%
\section{Games on graph families}
%---------------%

We study the impartial games on several graph families. We always assume that these games are played on a graph $G=(V,E)$ no matter what graph family we work with.

%---------------%
\subsection{Cycle graphs}\label{subsec:cycle graphs}
%---------------%

In this subsection we study the impartial games on the cycle graph $G=C_{n}$ with $n\ge 3$ that has vertex set $\{v_1,\ldots,v_n\}$ and $v_i$ is adjacent to $v_{i+1}$ if the indices are considered modulo $n$.  For cycle graphs, geodetic closure agrees with convex closure, so the results of this section also apply to the geodetic closure variations.

The following is straightforward to verify.

\begin{prop}\label{prop:maximal nongen for cycles}
If $\pty(n)=0$, then $\mathcal{N}=\{\{v_{i+1},\ldots, v_{i+n/2}\} \mid v_i\in V\}$. If $\pty(n)=1$, then $\mathcal{N}=\{\{v_{i+1},\ldots, v_{i+(n+1)/2}\} \mid v_i\in V\}$.
\end{prop}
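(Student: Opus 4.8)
The plan is to reduce the entire statement to a clean description of the \emph{convex} proper subsets of $C_n$, since by Proposition~\ref{prop:maximal is closed} every maximal nongenerating set is convex. Writing $h := n/2$ when $\pty(n)=0$ and $h := (n+1)/2$ when $\pty(n)=1$, I claim that the nonempty proper convex subsets of $C_n$ are exactly the \emph{arcs} (sets of cyclically consecutive vertices) containing at most $h$ vertices, and that the sets listed in the proposition are precisely the arcs of exactly $h$ vertices. Granting this, the argument finishes quickly: for any nongenerating $S$ the hull $[S]$ is a proper convex set, hence an arc of at most $h$ vertices, so $S$ lies inside some $h$-arc; each $h$-arc is itself nongenerating; and since no $h$-arc can sit inside a different $h$-arc, adjoining any outside vertex to an $h$-arc produces a set lying in no proper convex set, hence a generating set. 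Thus the $h$-arcs are exactly the maximal nongenerating sets, and running $i$ over all vertices lists each of them once.

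The characterization of the convex sets splits into two independent pieces. First, a direct distance computation: the two endpoints of a $k$-vertex arc are at arc-distance $k-1$, while the complementary route has $n-(k-1)$ edges; comparing the two shows that the arc is convex exactly when $k-1 < n/2$, i.e. $k \le h$, and that any longer arc has an endpoint pair whose geodesic (or, for even $n$ in the antipodal case $k-1 = n/2$, one of its two geodesics) leaves the arc, forcing the hull to be all of $V$. Second, and this is the main obstacle, I must show that every nonempty proper convex set is genuinely \emph{connected}, i.e. an arc.

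For the connectivity step I argue by contradiction through a gap count. If a convex set $S$ were disconnected, its complement would decompose into at least two \emph{gaps} (maximal runs of non-$S$ vertices); for a gap $G$, the two flanking $S$-vertices are joined by a through-gap path of $|G|+1$ edges, which is a geodesic precisely when $|G|+1 \le n/2$, and in that case it exhibits a non-$S$ vertex forced into $[S]$, contradicting convexity. The key counting observation is that a ``safe'' gap avoiding this must satisfy $|G| > n/2 - 1$, so two safe gaps would together contribute more than $n-2$ vertices to the complement, forcing $|S| \le 1$ and contradicting the presence of at least two runs. Hence at most one gap can be safe, so some gap triggers the contradiction and $S$ is connected after all. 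With the distance computation and this connectivity lemma in hand, assembling the ingredients as above yields the stated form of $\mathcal{N}$ in both parities simultaneously.
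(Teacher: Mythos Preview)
Your argument is correct. The paper itself gives no proof of this proposition at all, simply labeling it ``straightforward to verify,'' so there is no alternative approach to compare against; your route via characterizing the proper convex subsets of $C_n$ as arcs of length at most $h$ (using the gap-counting argument for connectivity and the direct distance check for the length bound) is a clean and complete way to fill in what the authors left implicit.
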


We immediately get the following result.

\begin{corollary}
For cycle graphs, $\Phi=\emptyset$.
\end{corollary}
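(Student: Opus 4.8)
The plan is to apply Proposition~\ref{prop:maximal nongen for cycles} directly and show that the intersection of all maximal nongenerating sets is empty. Recall that $\Phi = \bigcap \mathcal{N}(G)$, so I must verify that no vertex $v_j$ belongs to every set in $\mathcal{N}$. The key observation is that each maximal nongenerating set of $C_n$ is an arc of consecutive vertices (of length $n/2$ or $(n+1)/2$ depending on parity), and as the starting index $i$ ranges over all of $V$, these arcs collectively cover the cycle in a way that forces any fixed vertex to be omitted by at least one arc.

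First I would treat the even case $\pty(n)=0$. Here the maximal nongenerating sets are the arcs $\{v_{i+1},\ldots,v_{i+n/2}\}$ of length exactly $n/2$, which is strictly less than $n$ for $n\ge 3$. Given any target vertex $v_j$, I would exhibit an explicit index $i$ for which $v_j$ lies outside the arc: choosing $i=j$ yields the arc $\{v_{j+1},\ldots,v_{j+n/2}\}$, and since this arc begins at $v_{j+1}$ and has length $n/2 < n$, it does not wrap around to include $v_j$ itself. Hence $v_j \notin \{v_{j+1},\ldots,v_{j+n/2}\} \in \mathcal{N}$, so $v_j \notin \Phi$. As $v_j$ was arbitrary, $\Phi = \emptyset$.

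The odd case $\pty(n)=1$ is nearly identical, with arcs $\{v_{i+1},\ldots,v_{i+(n+1)/2}\}$ of length $(n+1)/2$. Again choosing $i=j$ gives the arc $\{v_{j+1},\ldots,v_{j+(n+1)/2}\}$ starting at $v_{j+1}$; since $(n+1)/2 < n$ for $n\ge 3$, the arc has length less than $n$ and cannot wrap all the way back to $v_j$, so $v_j$ is excluded. The same conclusion $\Phi=\emptyset$ follows. Alternatively, one could simply invoke Proposition~\ref{prop:vertexTransitive}, since cycle graphs are vertex transitive and the full vertex set is generating; I would likely mention this as the cleaner one-line argument, since it sidesteps the index bookkeeping entirely.

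The only mild obstacle is the modular index arithmetic: one must confirm that an arc of length strictly less than $n$ beginning immediately after $v_j$ genuinely omits $v_j$, which requires checking that the arc does not wrap around the cycle back to its starting point. This is routine once one notes that the arc length is bounded by $(n+1)/2 \le n-1$ for $n \ge 3$, so no wraparound reaching $v_j$ can occur. Given the availability of Proposition~\ref{prop:vertexTransitive}, the entire result is essentially immediate, and I expect the author's proof to be a single sentence.
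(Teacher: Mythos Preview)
Your proposal is correct and follows the same approach as the paper: the corollary is stated immediately after Proposition~\ref{prop:maximal nongen for cycles} with the phrase ``We immediately get the following result,'' so the paper gives no explicit proof at all and simply treats it as a direct consequence of the description of $\mathcal{N}$. Your explicit index argument is a correct fleshing-out of this, and your observation that Proposition~\ref{prop:vertexTransitive} provides a one-line alternative is also valid.
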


\begin{prop}
For cycle graphs, $\nim(\DNG(C_{n}))=\begin{cases}
1, & \text{if }n\equiv_4 1,2 \\
0, & \text{if }n\equiv_4 3,0.	
\end{cases}
$% $(\nim(\DNG(C_{n})))_{n\ge3}=(0,0,1,1,0,0,1,1,\ldots)$.
\end{prop}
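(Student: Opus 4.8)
The plan is to apply Proposition~\ref{prop:all-same} by examining the parity of the maximal nongenerating sets supplied by Proposition~\ref{prop:maximal nongen for cycles}. The key observation is that \emph{every} set in $\mathcal{N}$ has the same cardinality, namely $n/2$ when $\pty(n)=0$ and $(n+1)/2$ when $\pty(n)=1$, so $\mathcal{N}$ consists of sets all of a common parity $r$, and then $\nim(\DNG(C_n))=r$ by Proposition~\ref{prop:all-same}. The entire problem thus reduces to computing $\pty$ of this common cardinality as a function of $n \bmod 4$.

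I would split into the four residue classes. If $n\equiv_4 0$, then $n$ is even and $|N|=n/2$ is itself even, so $r=\pty(n/2)=0$. If $n\equiv_4 2$, then $n$ is even and $n/2$ is odd, giving $r=1$. If $n\equiv_4 1$, then $n$ is odd and $|N|=(n+1)/2$; writing $n=4k+1$ gives $(n+1)/2=2k+1$, which is odd, so $r=1$. If $n\equiv_4 3$, writing $n=4k+3$ gives $(n+1)/2=2k+2$, which is even, so $r=0$. Collecting these, $r=1$ exactly when $n\equiv_4 1,2$ and $r=0$ exactly when $n\equiv_4 3,0$, which is precisely the claimed piecewise formula.

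There is essentially no obstacle here; the proof is a short parity bookkeeping argument once Proposition~\ref{prop:all-same} and Proposition~\ref{prop:maximal nongen for cycles} are invoked. The only point demanding minor care is the hinge that Proposition~\ref{prop:all-same} requires \emph{all} members of $\mathcal{N}$ to share the same parity, which is immediate because Proposition~\ref{prop:maximal nongen for cycles} exhibits every maximal nongenerating set as a block of consecutive vertices of one fixed length depending only on $n$. I would state this common length, compute its parity in each of the four cases $n\equiv_4 0,1,2,3$, and then read off the result, so the write-up amounts to one sentence of setup followed by a four-line case analysis.
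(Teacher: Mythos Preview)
Your proposal is correct and follows exactly the paper's approach: invoke Proposition~\ref{prop:maximal nongen for cycles} to see that every $N\in\mathcal{N}$ has the same size, compute the parity of that size in the four residue classes modulo~$4$, and apply Proposition~\ref{prop:all-same}. The paper's write-up is simply a terser version of your case analysis.
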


\begin{proof}
In light of Proposition~\ref{prop:maximal nongen for cycles}, for $N\in\mathcal{N}$, we have
\[
\pty(N)=\begin{cases}
1, & \text{if }n\equiv_4 1,2 \\
0, & \text{if }n\equiv_4 3,0.
\end{cases}
\]
The result now follows from Proposition~\ref{prop:all-same}.
\end{proof}

\begin{prop}
For cycle graphs, $\nim(\GEN(C_{n}))=\pty(n)$.
\end{prop}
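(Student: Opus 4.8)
The plan is to compute the nim-number of $\GEN(C_n)$ by determining the parity of every minimal generating set and invoking a parity argument analogous to Proposition~\ref{prop:all-same}, but applied to the achievement game. The key observation is that for the achievement game, each terminal position of $\GEN(C_n)$ is a minimal generating set together with possibly extra vertices; however, the cleaner route is to show that every play of $\GEN(C_n)$ has the same parity of length, so that Proposition~\ref{prop:all-same general} applies directly and yields $\nim(\GEN(C_n)) = \pty(n)$.

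First I would use Proposition~\ref{prop:maximal nongen for cycles} to understand the maximal nongenerating sets, which in turn determine when a set first becomes generating. For the achievement game, a play proceeds by selecting vertices one at a time; the game ends as soon as the selected set generates. The crucial point is that whenever we add a single vertex to a nongenerating set $P \subseteq N$ for some maximal nongenerating $N$, and this addition produces a generating set, the resulting position has exactly $|N|+1$ vertices only if $P = N$. I would argue that the terminal positions of $\GEN(C_n)$ all have the same cardinality, namely one more than the common size of the maximal nongenerating sets. By Proposition~\ref{prop:maximal nongen for cycles}, every $N \in \mathcal{N}$ has the same size: $n/2$ when $n$ is even and $(n+1)/2$ when $n$ is odd. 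Since each maximal nongenerating set is convex (Proposition~\ref{prop:maximal is closed}) and covers the cycle symmetrically, adding any one vertex outside such a set produces a generating set, so every terminal position of $\GEN(C_n)$ has cardinality $|N|+1$.

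The heart of the argument is establishing that every terminal position has this same cardinality, equivalently that the game cannot end "early" with fewer selected vertices. This amounts to showing that any nongenerating set $P$ can be extended to a maximal nongenerating set, so that a position with $|P| < |N|$ always has a move keeping it nongenerating; this follows from Proposition~\ref{prop:vertex is contained in max nongen} together with the structure of $\mathcal{N}$ for cycles. Consequently every play of $\GEN(C_n)$ has length exactly $|N|+1$, and its parity is $\pty(|N|+1)$. A short case check gives $\pty(|N|+1) = \pty(n)$ in both parity classes of $n$: when $n$ is even, $|N|+1 = n/2 + 1$ and one verifies the parity matches $\pty(n)=0$ after accounting for the $n \equiv_4 0, 2$ subcases; when $n$ is odd, $|N|+1 = (n+3)/2$ matches $\pty(n)=1$. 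Applying Proposition~\ref{prop:all-same general} then yields $\nim(\GEN(C_n)) = \pty(n)$.

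The main obstacle I anticipate is verifying rigorously that \emph{every} terminal position has the same cardinality $|N|+1$, rather than merely the maximal ones. One must rule out the possibility that some partially-filled nongenerating set becomes generating upon adding a vertex before reaching size $|N|$. I would handle this by showing that for cycles, a set generates precisely when it contains two vertices that are "antipodal enough" to force coverage of the whole cycle via geodesics, and that any set of size at most $|N|-1$ can avoid this condition; this uses the explicit description of $\mathcal{N}$ in Proposition~\ref{prop:maximal nongen for cycles} and the fact that these maximal sets are intervals (arcs) of the cycle. Once the uniform play-length is secured, the parity bookkeeping is routine and the conclusion follows immediately from the all-same-parity principle.
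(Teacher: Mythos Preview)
Your approach has a genuine gap. The claim that every terminal position of $\GEN(C_n)$ has cardinality $|N|+1$ is false, and so is the derived claim that every play has the same length. In $C_4$, for instance, the antipodal pair $\{v_1,v_3\}$ already generates, so the play $v_1,v_3$ ends after two moves; but the play $v_1,v_2,v_3$ (where $\{v_1,v_2\}$ is a maximal nongenerating arc) ends after three moves. The same phenomenon occurs for all $n\ge 4$: there are generating sets of size $2$ (for even $n$) or size $3$ (for odd $n$) that are strictly smaller than $|N|+1$. Consequently Proposition~\ref{prop:all-same general} does not apply to $\GEN(C_n)$.

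The logical slip is in the sentence ``a position with $|P|<|N|$ always has a move keeping it nongenerating.'' That statement is true, but it does not prevent the game from ending early: in the achievement game the mover is allowed, and often eager, to pick a vertex that \emph{does} generate. Having a nongenerating option is not the same as having only nongenerating options. Your parity bookkeeping also fails independently: for $n\equiv_4 0$ one gets $\pty(n/2+1)=1\neq 0=\pty(n)$, and for $n\equiv_4 1$ one gets $\pty((n+3)/2)=0\neq 1=\pty(n)$.

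The paper's argument avoids this entirely. It uses vertex transitivity of $C_n$ to conclude that every opening move is shrewd, so by Proposition~\ref{equivalent} the nim-number is $0$ or $1$; then it cites the outcome analysis of Buckley and Harary to pin down which of the two occurs. If you want a self-contained route, you should either reproduce that outcome analysis or work with the full structure digraph rather than trying to force a uniform play length.
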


\begin{proof}
The proof of \cite[Theorem~1]{BuckleyHarary86} determines that the first player wins exactly when $n$ is odd. Since every first move is shrewd, the nim-value of every game on $C_{n}$ has nim-value $0$ or $1$ by Proposition~\ref{equivalent}.
\end{proof}

%---------------%
\subsection{Hypercube graphs}
%---------------%

For $n\geq 2$, we will denote the set of binary strings of length $n$ by $\{0,1\}^n$. That is,
\[
\{0,1\}^n := \{a_1a_2\cdots a_n \mid a_k \in \{0,1\} \}.
\]
The \emph{hypercube graph} of dimension $n\geq 0$, denoted by $Q_n$, is defined to be the graph whose vertices are elements of $\{0,1\}^n$ with two binary strings connected by an edge exactly when they differ by a single digit (i.e., the Hamming distance between the two vertices is equal to one). 

The following result is easily seen.

\begin{prop}\label{prop:hypercubeclosure} If $a_1a_2\cdots a_n, b_1b_2\cdots b_n\in V(Q_n)$, then
\[
[\{a_1a_2\cdots a_n, b_1b_2\cdots b_n\}]=\{c_1c_2\cdots c_n\mid c_i=a_i \text{ whenever } a_i=b_i\}.
\]
\end{prop}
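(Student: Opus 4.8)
The plan is to identify the claimed right-hand side, which I will call $S := \{c_1\cdots c_n \mid c_i = a_i \text{ whenever } a_i = b_i\}$, first as the geodetic closure $I[\{a,b\}]$ and then to verify that $S$ is already convex, so that the iteration defining the convex hull stabilizes immediately at $S$. I would write $D := \{i \mid a_i \neq b_i\}$ for the set of coordinates on which $a$ and $b$ disagree, so that $S$ is precisely the set of strings agreeing with $a$ (equivalently $b$) off $D$ and arbitrary on $D$.

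The key step is a coordinatewise description of geodesics in $Q_n$. Since distance in $Q_n$ is Hamming distance, a vertex $z$ lies on a geodesic between vertices $x$ and $y$ exactly when $d(x,z)+d(z,y)=d(x,y)$. I would expand both sides as sums of contributions over the $n$ coordinates: a coordinate $i$ with $x_i=y_i$ contributes $0$ to the right side and $2[z_i\neq x_i]$ to the left, while a coordinate with $x_i\neq y_i$ contributes $1$ to each side automatically, because $z_i$ then equals exactly one of $x_i,y_i$. Hence the equality holds if and only if $z_i=x_i$ at every position where $x_i=y_i$. Applying this with $x=a$ and $y=b$ gives $I[\{a,b\}]=S$, since the only nontrivial pair of vertices in $\{a,b\}$ is $(a,b)$.

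It remains to check that $S$ is convex, that is, $I[S]=S$. Given $x,y\in S$ and any $z$ on a geodesic between them, the preceding characterization gives $z_i=x_i$ wherever $x_i=y_i$; in particular, for every $i\notin D$ we have $x_i=a_i=y_i$, forcing $z_i=a_i$, so $z\in S$. Thus $I[S]\subseteq S$, and the reverse inclusion $S\subseteq I[S]$ is automatic, giving $I[S]=S$. Combining the two steps, the geodetic closure iteration stabilizes after a single step at $S$, so $[\{a,b\}]=S$, as claimed. I expect the only real obstacle to be stating the geodesic characterization cleanly; once the coordinatewise accounting of $d(x,z)+d(z,y)=d(x,y)$ is in place, both the identification $I[\{a,b\}]=S$ and the convexity of $S$ follow immediately.
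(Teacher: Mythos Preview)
Your proof is correct. The paper states this proposition without proof, remarking only that ``the following result is easily seen,'' so there is no argument to compare against; your coordinatewise analysis of the condition $d(x,z)+d(z,y)=d(x,y)$ is exactly the standard way to make the claim precise, and both steps (identifying $I[\{a,b\}]$ with $S$ and verifying $S$ is convex) are carried out cleanly.
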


The next example illustrates that the geodetic closure and convex hull games are not the same on $Q_n$ if $n\ge 3$.

\begin{example}\label{ex:cube}
Consider $S=\{000, 011, 110\}$, a subset of $Q_3$. Then $[S]=V$ but $I[S]=V \setminus \{101\}$.  That is, $S$ is a generating set for the convex hull but not for the geodetic closure. One can construct analogous examples in larger hypercubes.
\end{example}

We say that two binary strings $a_1a_2\cdots a_n$ and $b_1b_2\cdots b_n$ are \emph{antipodal} if $a_i\neq b_i$ for all $1\leq i\leq n$.

\begin{prop}
Every pair of antipodal vertices of $Q_n$ form a minimal generating set.
\end{prop}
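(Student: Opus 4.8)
The plan is to read off the result directly from the explicit description of two-element convex hulls in $Q_n$ provided by Proposition~\ref{prop:hypercubeclosure}, and then to dispatch minimality by inspecting the (very few) proper subsets.

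First I would fix antipodal vertices $a = a_1a_2\cdots a_n$ and $b = b_1b_2\cdots b_n$, so that $a_i \neq b_i$ for every index $i$. Applying Proposition~\ref{prop:hypercubeclosure}, the convex hull $[\{a,b\}]$ is the set of all strings $c_1c_2\cdots c_n$ satisfying $c_i = a_i$ whenever $a_i = b_i$. Antipodality means there is no index $i$ with $a_i = b_i$, so the defining condition is vacuous and we obtain $[\{a,b\}] = \{0,1\}^n = V$. Hence $\{a,b\}$ is a generating set.

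Next I would verify minimality by checking that no proper subset of $\{a,b\}$ generates. The proper subsets are $\emptyset$, $\{a\}$, and $\{b\}$. Each singleton is convex, since there are no geodesics joining two \emph{distinct} vertices of a one-element set, so the convex hull operator fixes it; thus $[\{a\}] = \{a\}$ and $[\{b\}] = \{b\}$, while $[\emptyset] = \emptyset$. Because $|V| = 2^n > 1$, none of these hulls equals $V$. Therefore $\{a,b\}$ is a minimal generating set.

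I do not anticipate any genuine obstacle: the whole argument amounts to substituting the antipodality hypothesis into Proposition~\ref{prop:hypercubeclosure} for the generating half, and observing that singletons are convex for the minimality half. The only point needing even a one-line justification is that the convex hull operator fixes one-element sets, which is immediate from the absence of nontrivial geodesics within a singleton.
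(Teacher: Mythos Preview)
Your proof is correct and essentially matches the paper's approach. The paper computes the interval $I[a,b]$ directly (noting it contains all $2^n$ strings) and then observes $I[a,b]\subseteq[\{a,b\}]$, whereas you invoke Proposition~\ref{prop:hypercubeclosure} for the convex hull itself; these are the same calculation in slightly different dress. The paper actually omits the explicit minimality check that you include, so your version is if anything more complete.
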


\begin{proof}
Let $a, b \in V$ be antipodal. Since they differ in all $n$ entries, the interval between them consists of $2^n$ strings, which is all of the vertices. Since $I[a,b] = V$, we must also have $[\{a,b\}] = V$.
\end{proof}

For $1\leq i\leq n$ and $b\in\{0,1\}$, define 
\[
Q_{n,i,b}:=\{a_1a_2\cdots a_n \in V(Q_n)\mid a_i=b\}.
\]
That is, $Q_{n,i,b}$ is the collection of vertices whose $i$th entry is $b$.

\begin{prop}\label{prop:hypercubeN}
For hypercubes, $\mathcal{N}=\{Q_{n,i,b}\mid 1\leq i\leq n,b\in\{0,1\}\}$.
\end{prop}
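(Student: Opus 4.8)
The plan is to prove the two inclusions by first establishing the governing principle that the convex hull of any set in $Q_n$ is the smallest \emph{subcube} (face) containing it. For $S\subseteq V(Q_n)$, let $T(S)$ denote the set of coordinates on which all strings of $S$ agree, and for $i\in T(S)$ let $b_i$ be the common value. I claim that
\[
[S]=\{c_1\cdots c_n \mid c_i=b_i \text{ for all } i\in T(S)\},
\]
the unique smallest subcube containing $S$. Granting this, the proposition follows quickly, so the real work is the claim, whose forward direction is the main obstacle.

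For the inclusion $[S]\subseteq \sigma(S)$ (writing $\sigma(S)$ for the subcube on the right), I would first check that every subcube is convex: if $x,y$ both have $i$th coordinate $b_i$, then by Proposition~\ref{prop:hypercubeclosure} every $c\in[\{x,y\}]$ satisfies $c_i=x_i=b_i$, so the geodetic interval, and hence the convex hull, of two points of $\sigma(S)$ stays inside $\sigma(S)$; since $\sigma(S)$ is a convex set containing $S$, it contains $[S]$. The harder inclusion $\sigma(S)\subseteq[S]$ I would prove by a coordinate-flipping induction. Fix $p\in S$; every element of $\sigma(S)$ has the form $p^{\oplus A}$ ($p$ with the coordinates in some $A\subseteq [n]\setminus T(S)$ flipped). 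I would show $p^{\oplus A}\in[S]$ by induction on $|A|$: the base case is $p\in S$, and for the step, given $q:=p^{\oplus A}\in[S]$ and a free coordinate $j\notin A$, freeness guarantees some $s\in S$ with $s_j\neq q_j$. Then Proposition~\ref{prop:hypercubeclosure} applied to $\{q,s\}$ shows $q^{\oplus j}\in[\{q,s\}]$, because $q^{\oplus j}$ differs from $q$ only at the disagreeing coordinate $j$; since $q,s\in[S]$ and $[\cdot]$ is a closure operator, $q^{\oplus j}=p^{\oplus(A\cup\{j\})}\in[[S]]=[S]$. This yields the claim.

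With the claim in hand, I would establish the two inclusions defining $\mathcal{N}$. For the first, each $Q_{n,i,b}$ is the subcube fixing only coordinate $i$ (every other coordinate takes both values when $n\ge 2$), so $\sigma(Q_{n,i,b})=Q_{n,i,b}\neq V$, making $Q_{n,i,b}$ convex and nongenerating; adjoining any $v\notin Q_{n,i,b}$ (so $v_i=1-b$) makes coordinate $i$ split as well, whence $T(Q_{n,i,b}\cup\{v\})=\emptyset$ and $[Q_{n,i,b}\cup\{v\}]=\sigma=V$, so $Q_{n,i,b}$ is maximal nongenerating. For the reverse inclusion, let $N\in\mathcal{N}$. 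By Proposition~\ref{prop:maximal is closed}, $N=[N]=\sigma(N)$ is a subcube, and $N\neq V$ forces $T(N)\neq\emptyset$; choosing $i\in T(N)$ with common value $b$ gives $N\subseteq Q_{n,i,b}$. Since $Q_{n,i,b}$ is nongenerating and $N$ is maximal, $N=Q_{n,i,b}$. This matches the two containments, proving $\mathcal{N}=\{Q_{n,i,b}\mid 1\le i\le n,\ b\in\{0,1\}\}$. The only delicate point is the flip induction, and I would take care that each single-coordinate flip is justified by the two-point closure formula rather than assumed.
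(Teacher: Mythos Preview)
Your proof is correct but takes a genuinely different route from the paper's. The paper proceeds by induction on $n$: the base case $Q_2\cong C_4$ is handled by Proposition~\ref{prop:maximal nongen for cycles}, and the inductive step exploits the decomposition $Q_n\cong Q_{n-1}\boxprod P_2$ together with the general product formula of Proposition~\ref{prop:maximal product}, which says $\mathcal{N}(G\boxprod H)=\{N\times V(H)\mid N\in\mathcal{N}(G)\}\cup\{V(G)\times N\mid N\in\mathcal{N}(H)\}$. Transporting this through the isomorphism immediately yields the $Q_{n,i,b}$'s.

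Your argument instead establishes the stronger intermediate lemma that $[S]$ is the smallest subcube containing $S$, and then reads off $\mathcal{N}$ directly. This is more self-contained: it needs only the two-point closure formula (Proposition~\ref{prop:hypercubeclosure}) and basic properties of closure operators, not the general box-product machinery of Section~\ref{subsection:productgraphs}. The flip induction is a clean idea and is carried out correctly; the key observation that a free coordinate guarantees an $s\in S$ with $s_j\neq q_j$ (since both bit values occur) is exactly what makes each step go through. The trade-off is that the paper's approach shows how the result fits into a broader structural picture (and the same induction pattern handles lattice graphs in Proposition~\ref{prop:maximals lattice graphs}), whereas yours gives an explicit description of \emph{all} convex hulls in $Q_n$, not just the maximal nongenerating ones, which is independently useful.
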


\begin{proof}
We use induction on $n$. The $n=2$ case follows from Proposition~\ref{prop:maximal nongen for cycles} since $Q_2\cong C_4$. Now, suppose $V(P_2)=\{0,1\}$ so that $\mathcal{N}(P_2)=\{\{0\},\{1\}\}$ according to Propositions~\ref{prop:unique min gen set}, \ref{prop:NforBlock}, and Example~\ref{ex:trees are block graphs}. For $n\geq 3$, the map $f:V(Q_{n-1})\times V(P_2)\to V(Q_n)$ defined via
\[
f(a_1\cdots a_{n-1},b)=a_1\cdots a_{n-1}b
\]
is a graph isomorphism from $Q_{n-1}\boxprod P_2$ to $Q_n$.  Proposition~\ref{prop:maximal product} and induction implies that 
$\mathcal{N}(Q_{n-1}\boxprod P_2)=\mathcal{A}\cup\mathcal{B}$,
where 
\[
\begin{aligned}
\mathcal{A} &=\{Q_{n-1,i,b}\times V(P_2)\mid 1\leq i\leq n-1, b\in\{0,1\}\}, \\
\mathcal{B} &=\{V(Q_{n-1})\times \{b\}\mid b\in\{0,1\}\}.
\end{aligned}
\]
Hence
$
f(\mathcal{N}(Q_{n-1}\boxprod P_2))
=\{Q_{n,i,b}\mid 1\leq i\leq n,b\in\{0,1\}\}
$
since 
\[
\begin{aligned}
f(V(Q_{n-1,i,b}) \times V(P_2)) &= Q_{n,i,b}, \\
f(V(Q_{n-1}) \times \{b\}) &= Q_{n,n,b}
\end{aligned}
\]
for all $1\leq i\leq n-1$ and $b\in\{0,1\}$. 
\end{proof}

\begin{corollary}
For hypercubes, $\Phi=\emptyset$.
\end{corollary}

\begin{prop}
For hypercubes, $\nim(\DNG(Q_{n}))=0$.
\end{prop}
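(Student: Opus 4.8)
The plan is to invoke Proposition~\ref{prop:all-same}, which reduces the entire computation of $\nim(\DNG(G))$ to verifying that all maximal nongenerating sets share a single common parity. First I would recall, via Proposition~\ref{prop:hypercubeN}, that $\mathcal{N}=\{Q_{n,i,b}\mid 1\leq i\leq n,\ b\in\{0,1\}\}$, so that every maximal nongenerating set is one of the ``coordinate hyperplanes'' $Q_{n,i,b}$ consisting of all binary strings whose $i$th entry equals $b$.

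Next I would compute the cardinality of such a set. Fixing the $i$th coordinate to be $b$ leaves the remaining $n-1$ coordinates completely free, so $|Q_{n,i,b}|=2^{n-1}$. In the regime $n\geq 2$ this is an even number, and hence $\pty(N)=\pty(2^{n-1})=0$ for every $N\in\mathcal{N}$.

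Finally, since every set in $\mathcal{N}$ has the same parity $r=0$, Proposition~\ref{prop:all-same} applies directly and yields $\nim(\DNG(Q_n))=0$, as claimed.

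I do not anticipate any genuine obstacle: essentially all of the combinatorial content has already been isolated in Proposition~\ref{prop:hypercubeN}, and what remains is an elementary cardinality count. The only point that warrants a moment of care is the range of $n$, since the argument relies on $2^{n-1}$ being even. This holds precisely for $n\geq 2$; for $n=1$ one instead has $|Q_{1,i,b}|=1$, giving odd parity and the different value $\nim(\DNG(Q_1))=1$. I would therefore keep the standing assumption $n\geq 2$ explicit throughout.
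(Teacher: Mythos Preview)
Your proposal is correct and follows exactly the paper's approach: invoke Proposition~\ref{prop:hypercubeN} to see that every maximal nongenerating set has size $2^{n-1}$, then apply Proposition~\ref{prop:all-same}. Your added remark about the standing assumption $n\geq 2$ is a reasonable clarification, since the parity argument indeed fails for $n=1$.
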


\begin{proof}
Every set in $\mathcal{N}$ has size $2^{n-1}$. By Proposition~\ref{prop:all-same}, $\nim(\DNG(Q_{n}))=0$.
\end{proof}
\begin{prop}
For hypercubes, $\nim(\GEN(Q_{n}))=0$.
\end{prop}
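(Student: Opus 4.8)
The plan is to exhibit an explicit winning strategy for the second player; by the winning-strategy characterization of nim-value $0$ recalled in Section~\ref{sec:Preliminaries}, producing such a strategy forces $\nim(\GEN(Q_n))=0$. The key tool is the antipodal involution $\sigma$ on $V(Q_n)$ sending $a_1\cdots a_n$ to its bitwise complement. For $n\ge 1$ this map is fixed-point-free, since a string can never equal its own complement, so $\sigma$ pairs each vertex with a distinct antipodal partner.

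First I would observe that the first player's opening move $v=a_1\cdots a_n$ cannot end the game: the singleton $\{v\}$ lies in the maximal nongenerating set $Q_{n,1,a_1}$ from Proposition~\ref{prop:hypercubeN}, so $[\{v\}]\neq V$ and play continues. The second player then responds by selecting $\sigma(v)$. Because $\sigma$ has no fixed points, $\sigma(v)\neq v$, so this is a legal move onto a previously-unselected vertex. I would then invoke the earlier proposition that every pair of antipodal vertices forms a (minimal) generating set to conclude $[\{v,\sigma(v)\}]=V$. Hence the second player's very first move creates a generating set, making the second player the one who first achieves $[P]=V$; under the normal-play convention this player wins. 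Since this works regardless of the first player's opening choice, the second player has a winning strategy, and therefore $\nim(\GEN(Q_n))=0$.

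There is essentially no hard step in this argument. The only two things to verify are that the antipodal response is always available (immediate from fixed-point-freeness of $\sigma$) and that the game has not already terminated when the second player moves (immediate because singletons are nongenerating). What I would emphasize is that this symmetry argument bypasses any structure-diagram computation: unlike $\DNG(Q_n)$, whose value $0$ followed from the parity of the sets in $\mathcal{N}$ via Proposition~\ref{prop:all-same}, here the achievement game collapses to a guaranteed two-move win for the second player, so no type calculus on the terminal class $X_V$ is required.
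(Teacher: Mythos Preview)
Your proposal is correct and follows essentially the same approach as the paper: the second player responds to the opening move with its antipodal vertex, which forms a generating pair and ends the game. Your write-up is more detailed (explicitly checking that singletons are nongenerating and that the antipodal involution is fixed-point-free), but the underlying idea is identical to the paper's one-line proof.
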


\begin{proof}
The second player wins by selecting the antipodal vertex to the choice of player one, and every antipodal pair forms a minimal generating set.
\end{proof}

%---------------------%
\subsection{Lattice graphs}
%---------------------%

We will refer to graphs of the form $P_{n_1}\boxprod \cdots \boxprod P_{n_d}$ as $d$-dimensional \emph{lattice graphs}, where each $n_i\geq 2$.  We assume the vertices are labeled in a natural way using $d$-tuples of the form $(a_{1},a_{2},\ldots,a_{d})$, where each $1\leq a_{i}\leq n_i$. Two vertices are connected exactly when a single component differs by 1. A 2-dimensional lattice graph $P_{m}\boxprod P_{n}$ with $2\le m\le n$ and $3\le n$ is called a \emph{grid graph}.

The geodetic closure and the convex hull operators are the same for grid graphs. However, they are not the same for lattice graphs in general, as seen in Example~\ref{ex:cube}.

For $1\leq i\leq d$ define
\[
N_{i}:=\{(a_1,\ldots,a_d)\mid a_i > 1 \}, \quad
N^{i}:=\{(a_1,\ldots,a_d)\mid a_i<n_i\}.
\]
%Note that $N_i$ and $N^i$ correspond to the maximal nongenerating sets of $P_{n_i}$.

\begin{prop}\label{prop:maximals lattice graphs}
For lattice graphs, $\mathcal{N}=\{N_{i}, N^i\mid 1\leq i\leq d\}$.
\end{prop}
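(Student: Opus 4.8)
The plan is to induct on the dimension $d$, peeling off one path factor at a time with the product formula for maximal nongenerating sets (Proposition~\ref{prop:maximal product}). All of the substantive work is already packaged into that proposition together with the computation of $\mathcal{N}$ for a single path, so the argument is mostly a matter of bookkeeping between the Cartesian-product description and the $d$-tuple indexing.

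For the base case $d=1$ the lattice graph is simply the path $P_{n_1}$. A path is a tree and hence a block graph (Example~\ref{ex:trees are block graphs}), so by Proposition~\ref{prop:NforBlock} its simplicial vertices---the two endpoints $a_1=1$ and $a_1=n_1$---form the unique minimal generating set. Proposition~\ref{prop:unique min gen set} then identifies the maximal nongenerating sets as the complements of these endpoints, giving $\mathcal{N}(P_{n_1})=\{\{a_1\mid a_1>1\},\{a_1\mid a_1<n_1\}\}=\{N_1,N^1\}$, which is exactly the claimed description in dimension one. The hypothesis $n_1\ge 2$ guarantees $P_{n_1}$ is nontrivial, so these two sets are genuine (and distinct) maximal nongenerating sets.

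For the inductive step I would write $G=H\boxprod P_{n_d}$ with $H=P_{n_1}\boxprod\cdots\boxprod P_{n_{d-1}}$, and fix the identification of a $d$-tuple $(a_1,\ldots,a_d)$ with the pair $((a_1,\ldots,a_{d-1}),a_d)\in V(H)\times V(P_{n_d})$, so that $V(H)\times V(P_{n_d})$ is identified with $V(G)$. By the inductive hypothesis $\mathcal{N}(H)$ consists of the $2(d-1)$ sets $\{(a_1,\ldots,a_{d-1})\mid a_i>1\}$ and $\{(a_1,\ldots,a_{d-1})\mid a_i<n_i\}$ for $1\le i\le d-1$, while the base case gives $\mathcal{N}(P_{n_d})=\{\{a_d\mid a_d>1\},\{a_d\mid a_d<n_d\}\}$. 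Applying Proposition~\ref{prop:maximal product} yields
\[
\mathcal{N}(G)=\{N\times V(P_{n_d})\mid N\in\mathcal{N}(H)\}\cup\{V(H)\times N\mid N\in\mathcal{N}(P_{n_d})\}.
\]
It then remains only to translate: under the identification above, $\{(a_1,\ldots,a_{d-1})\mid a_i>1\}\times V(P_{n_d})=N_i$ and the corresponding $a_i<n_i$ set equals $N^i$ for $1\le i\le d-1$, while $V(H)\times\{a_d>1\}=N_d$ and $V(H)\times\{a_d<n_d\}=N^d$. Collecting these gives precisely $\{N_i,N^i\mid 1\le i\le d\}$.

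I do not expect a genuine mathematical obstacle, since the nontrivial content is entirely supplied by Proposition~\ref{prop:maximal product} and the single-path computation. The one place demanding care is the final translation step: one must state the identification of $V(H)\times V(P_{n_d})$ with $V(G)$ explicitly and track the coordinate indices, so that each of the two families of product sets is matched unambiguously with the intended $N_i$ and $N^i$. Once that identification is pinned down, the equality of the two sets follows by direct comparison.
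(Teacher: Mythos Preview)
Your proposal is correct and follows essentially the same approach as the paper: induction on $d$ using Proposition~\ref{prop:maximal product} for the inductive step and the path case (via Example~\ref{ex:trees are block graphs} and Proposition~\ref{prop:unique min gen set}) for the base. The paper compresses this into a single sentence citing exactly these ingredients, while you have written out the bookkeeping in full.
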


\begin{proof}
This follows by induction together with Example~\ref{ex:trees are block graphs} and Propositions~\ref{prop:unique min gen set}  and~\ref{prop:maximal product}.
\end{proof}

We immediately get the following.

\begin{corollary}
For lattice graphs, $\Phi=\{(a_1,\ldots,a_d)\mid 1<a_i<n_i\}$.
\end{corollary}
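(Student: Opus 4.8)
The plan is to compute the Frattini subset directly from its definition $\Phi=\bigcap\mathcal{N}$, using the explicit description of $\mathcal{N}$ furnished by Proposition~\ref{prop:maximals lattice graphs}. That proposition tells us $\mathcal{N}=\{N_i,N^i\mid 1\le i\le d\}$, so the entire task reduces to intersecting the $2d$ explicit sets
\[
\Phi=\bigcap_{i=1}^d N_i\;\cap\;\bigcap_{i=1}^d N^i,
\]
which I would carry out coordinatewise.

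The key observation is that membership in each of these sets is a condition on a single coordinate: a vertex $(a_1,\ldots,a_d)$ lies in $N_i$ exactly when $a_i>1$, and lies in $N^i$ exactly when $a_i<n_i$. Since these constraints involve distinct (and independent) coordinate conditions, a vertex belongs to the intersection of all of them if and only if, for every index $i$, both $a_i>1$ and $a_i<n_i$ hold simultaneously, that is, $1<a_i<n_i$ for all $i$. This is precisely the claimed description $\Phi=\{(a_1,\ldots,a_d)\mid 1<a_i<n_i\}$.

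I do not expect any genuine obstacle here: the only content is unwinding the definition of $\Phi$ and noting that the constraints imposed by the various $N_i$ and $N^i$ decouple across coordinates, so they can be imposed all at once. The one remark worth including for completeness is that the stated set is nonempty exactly when every $n_i\ge 3$; when some $n_i=2$ the defining condition $1<a_i<2$ is unsatisfiable, so $N_i\cap N^i=\emptyset$ and hence $\Phi=\emptyset$, which remains consistent with the formula.
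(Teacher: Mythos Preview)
Your proof is correct and follows essentially the same approach as the paper, which simply states ``We immediately get the following'' after Proposition~\ref{prop:maximals lattice graphs}; you have spelled out the intersection computation that the paper leaves implicit. Your additional remark about the case $n_i=2$ yielding $\Phi=\emptyset$ is a nice sanity check but not needed for the argument.
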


For a grid graph, each maximal nongenerating set is the complement of the set of vertices lying along one side. So, for grid graphs there are exactly four maximal nongenerating sets and a total of 16 intersection sets when $m>2$ and 13 intersection sets when $m=2$.  Figure~\ref{fig:skeleton structure diagram for grid DNG} provides representative quotient structure digraphs for $\GEN(P_{m}\boxprod P_{n})$ and $\DNG(P_{m}\boxprod P_{n})$. The classes are indicated by visualizations of representative intersection subsets. We have labeled each node with the number of intersection sets being identified. In this case, the option relationship is easy to compute.

\begin{figure}[h]
\centering
\def\sc{.2}
\begin{tikzpicture}[scale=1.1,inner sep=1pt]
\node (0) at (0,4) {
\begin{tikzpicture}[scale=\sc]
\draw (0,0) -- (5,0) -- (5,2) -- (0,2) -- cycle;
\draw (0,1) -- (5,1);\draw (1,0) -- (1,2);\draw (4,0) -- (4,2);
\end{tikzpicture}
};
\node[label=left:\tiny $2\times$] (11) at (1,3) {%\tiny 2
\begin{tikzpicture}[scale=\sc]
\draw (0,0) -- (5,0) -- (5,2) -- (0,2) -- cycle;
\draw (0,1) -- (5,1);\draw (1,0) -- (1,2);\draw (4,0) -- (4,2);
\draw[fill=black!20] (1,0) -- (4,0) -- (4,1) -- (1,1) -- cycle;
\end{tikzpicture}
};
\node[label=left:\tiny $4\times$] (20) at (0,2) {%\tiny 4
\begin{tikzpicture}[scale=\sc]
\draw (0,0) -- (5,0) -- (5,2) -- (0,2) -- cycle;
\draw (0,1) -- (5,1);\draw (1,0) -- (1,2);\draw (4,0) -- (4,2);
\draw[fill=black!20] (1,0) -- (4,0) -- (4,1) -- (1,1) -- cycle;
\draw[fill=black!20] (0,0) -- (1,0) -- (1,1) -- (0,1) -- cycle;
\end{tikzpicture}
};
\node (21) at (2,2) {
\begin{tikzpicture}[scale=\sc]
\draw (0,0) -- (5,0) -- (5,2) -- (0,2) -- cycle;
\draw (0,1) -- (5,1);\draw (1,0) -- (1,2);\draw (4,0) -- (4,2);
\draw[fill=black!20] (1,0) -- (4,0) -- (4,1) -- (1,1) -- cycle;
\draw[fill=black!20] (1,1) -- (4,1) -- (4,2) -- (1,2) -- cycle;
\end{tikzpicture}
};
\node[label=left:\tiny $2\times$] (30) at (-1,1) {%\tiny 2
\begin{tikzpicture}[scale=\sc]
\draw (0,0) -- (5,0) -- (5,2) -- (0,2) -- cycle;
\draw (0,1) -- (5,1);\draw (1,0) -- (1,2);\draw (4,0) -- (4,2);
\draw[fill=black!20] (1,0) -- (4,0) -- (4,1) -- (1,1) -- cycle;
\draw[fill=black!20] (0,0) -- (1,0) -- (1,1) -- (0,1) -- cycle;
\draw[fill=black!20] (4,0) -- (5,0) -- (5,1) -- (4,1) -- cycle;
\end{tikzpicture}
};
\node[label=left:\tiny $2\times$] (31) at (1,1) {%\tiny 2
\begin{tikzpicture}[scale=\sc]
\draw (0,0) -- (5,0) -- (5,2) -- (0,2) -- cycle;
\draw (0,1) -- (5,1);\draw (1,0) -- (1,2);\draw (4,0) -- (4,2);
\draw[fill=black!20] (1,0) -- (4,0) -- (4,1) -- (1,1) -- cycle;
\draw[fill=black!20] (1,1) -- (4,1) -- (4,2) -- (1,2) -- cycle;
\draw[fill=black!20] (0,0) -- (1,0) -- (1,1) -- (0,1) -- cycle;
\draw[fill=black!20] (0,1) -- (1,1) -- (1,2) -- (0,2) -- cycle;
\end{tikzpicture}};
\node (40) at (0,0) {
\begin{tikzpicture}[scale=\sc]
\draw (0,0) -- (5,0) -- (5,2) -- (0,2) -- cycle;
\draw (0,1) -- (5,1);\draw (1,0) -- (1,2);\draw (4,0) -- (4,2);
\draw[fill=purple,opacity=0.3] (1,0) -- (4,0) -- (4,1) -- (1,1) -- cycle;
\draw[fill=purple,opacity=0.3] (1,1) -- (4,1) -- (4,2) -- (1,2) -- cycle;
\draw[fill=purple,opacity=0.3] (0,0) -- (1,0) -- (1,1) -- (0,1) -- cycle;
\draw[fill=purple,opacity=0.3] (0,1) -- (1,1) -- (1,2) -- (0,2) -- cycle;
\draw[fill=purple,opacity=0.3] (4,0) -- (5,0) -- (5,1) -- (4,1) -- cycle;
\draw[fill=purple,opacity=0.3] (4,1) -- (5,1) -- (5,2) -- (4,2) -- cycle;
\end{tikzpicture}
};
\draw[a] (0) -- (11);
\draw[a] (0) -- (20);
\draw[a] (11) -- (20);
\draw[a] (11) -- (21);
\draw[a] (20) -- (30);
\draw[a] (20) -- (31);
\draw[a] (21) -- (31);
\draw[a] (11) -- (31);
\draw[a,dashed] (30) -- (40);
\draw[a,dashed] (31) -- (40);
\draw[a,dashed] (20) -- (40);
\end{tikzpicture}
\hfil
\def\sc{.18}
\begin{tikzpicture}[scale=1.1,inner sep=1pt]
\node (0) at (0,4) {
\begin{tikzpicture}[scale=\sc]
\draw (0,0) -- (5,0) -- (5,4) -- (0,4) -- cycle;
\draw (0,1) -- (5,1);\draw (0,3) -- (5,3);\draw (1,0) -- (1,4);\draw (4,0) -- (4,4);
\draw[fill=black!20] (1,1) -- (4,1) -- (4,3) -- (1,3) -- cycle;
\end{tikzpicture}
};
\node[label=left:\tiny $2\times$] (11) at (-1,3) {
\begin{tikzpicture}[scale=\sc]
\draw (0,0) -- (5,0) -- (5,4) -- (0,4) -- cycle;
\draw (0,1) -- (5,1);\draw (0,3) -- (5,3);\draw (1,0) -- (1,4);\draw (4,0) -- (4,4);
\draw[fill=black!20] (1,1) -- (4,1) -- (4,3) -- (1,3) -- cycle;
\draw[fill=black!20] (0,1) -- (1,1) -- (1,3) -- (0,3) -- cycle;
\end{tikzpicture}
};
\node[label=left:\tiny $2\times$] (12) at (1,3) {
\begin{tikzpicture}[scale=\sc]
\draw (0,0) -- (5,0) -- (5,4) -- (0,4) -- cycle;
\draw (0,1) -- (5,1);\draw (0,3) -- (5,3);\draw (1,0) -- (1,4);\draw (4,0) -- (4,4);
\draw[fill=black!20] (1,1) -- (4,1) -- (4,3) -- (1,3) -- cycle;
\draw[fill=black!20] (1,0) -- (4,0) -- (4,1) -- (1,1) -- cycle;
\end{tikzpicture}
};
\node (21) at (-2,2) {
\begin{tikzpicture}[scale=\sc]
\draw (0,0) -- (5,0) -- (5,4) -- (0,4) -- cycle;
\draw (0,1) -- (5,1);\draw (0,3) -- (5,3);\draw (1,0) -- (1,4);\draw (4,0) -- (4,4);
\draw[fill=black!20] (1,1) -- (4,1) -- (4,3) -- (1,3) -- cycle;
\draw[fill=black!20] (0,1) -- (1,1) -- (1,3) -- (0,3) -- cycle;
\draw[fill=black!20] (4,1) -- (5,1) -- (5,3) -- (4,3) -- cycle;
\end{tikzpicture}
};
\node[label=left:\tiny $4\times$] (22) at (0,2) {
\begin{tikzpicture}[scale=\sc]
\draw (0,0) -- (5,0) -- (5,4) -- (0,4) -- cycle;
\draw (0,1) -- (5,1);\draw (0,3) -- (5,3);\draw (1,0) -- (1,4);\draw (4,0) -- (4,4);
\draw[fill=black!20] (1,1) -- (4,1) -- (4,3) -- (1,3) -- cycle;
\draw[fill=black!20] (0,1) -- (1,1) -- (1,3) -- (0,3) -- cycle;
\draw[fill=black!20] (0,0) -- (1,0) -- (1,1) -- (0,1) -- cycle;
\draw[fill=black!20] (1,0) -- (4,0) -- (4,1) -- (1,1) -- cycle;
\end{tikzpicture}
};
\node (23) at (2,2) {
\begin{tikzpicture}[scale=\sc]
\draw (0,0) -- (5,0) -- (5,4) -- (0,4) -- cycle;
\draw (0,1) -- (5,1);\draw (0,3) -- (5,3);\draw (1,0) -- (1,4);\draw (4,0) -- (4,4);
\draw[fill=black!20] (1,1) -- (4,1) -- (4,3) -- (1,3) -- cycle;
\draw[fill=black!20] (1,0) -- (4,0) -- (4,1) -- (1,1) -- cycle;
\draw[fill=black!20] (1,3) -- (4,3) -- (4,4) -- (1,4) -- cycle;
\end{tikzpicture}
};
\node[label=left:\tiny $2\times$] (31) at (-1,1) {
\begin{tikzpicture}[scale=\sc]
\draw (0,0) -- (5,0) -- (5,4) -- (0,4) -- cycle;
\draw (0,1) -- (5,1);\draw (0,3) -- (5,3);\draw (1,0) -- (1,4);\draw (4,0) -- (4,4);
\draw[fill=black!20] (1,1) -- (4,1) -- (4,3) -- (1,3) -- cycle;
\draw[fill=black!20] (1,0) -- (4,0) -- (4,1) -- (1,1) -- cycle;
\draw[fill=black!20] (0,0) -- (1,0) -- (1,1) -- (0,1) -- cycle;
\draw[fill=black!20] (0,1) -- (1,1) -- (1,3) -- (0,3) -- cycle;
\draw[fill=black!20] (4,0) -- (5,0) -- (5,1) -- (4,1) -- cycle;
\draw[fill=black!20] (4,1) -- (5,1) -- (5,3) -- (4,3) -- cycle;
\end{tikzpicture}
};
\node[label=left:\tiny $2\times$] (32) at (1,1) {
\begin{tikzpicture}[scale=\sc]
\draw (0,0) -- (5,0) -- (5,4) -- (0,4) -- cycle;
\draw (0,1) -- (5,1);\draw (0,3) -- (5,3);\draw (1,0) -- (1,4);\draw (4,0) -- (4,4);
\draw[fill=black!20] (1,1) -- (4,1) -- (4,3) -- (1,3) -- cycle;
\draw[fill=black!20] (1,0) -- (4,0) -- (4,1) -- (1,1) -- cycle;
\draw[fill=black!20] (0,0) -- (1,0) -- (1,1) -- (0,1) -- cycle;
\draw[fill=black!20] (0,1) -- (1,1) -- (1,3) -- (0,3) -- cycle;
\draw[fill=black!20] (0,3) -- (1,3) -- (1,4) -- (0,4) -- cycle;
\draw[fill=black!20] (1,3) -- (4,3) -- (4,4) -- (1,4) -- cycle;
\end{tikzpicture}
};
\node (41) at (0,0) {
\begin{tikzpicture}[scale=\sc]
\draw (0,0) -- (5,0) -- (5,4) -- (0,4) -- cycle;
\draw (0,1) -- (5,1);\draw (0,3) -- (5,3);\draw (1,0) -- (1,4);\draw (4,0) -- (4,4);
\draw[fill=purple,opacity=0.3] (1,1) -- (4,1) -- (4,3) -- (1,3) -- cycle;
\draw[fill=purple,opacity=0.3] (1,0) -- (4,0) -- (4,1) -- (1,1) -- cycle;
\draw[fill=purple,opacity=0.3] (0,0) -- (1,0) -- (1,1) -- (0,1) -- cycle;
\draw[fill=purple,opacity=0.3] (0,1) -- (1,1) -- (1,3) -- (0,3) -- cycle;
\draw[fill=purple,opacity=0.3] (0,3) -- (1,3) -- (1,4) -- (0,4) -- cycle;
\draw[fill=purple,opacity=0.3] (1,3) -- (4,3) -- (4,4) -- (1,4) -- cycle;
\draw[fill=purple,opacity=0.3] (4,1) -- (5,1) -- (5,3) -- (4,3) -- cycle;
\draw[fill=purple,opacity=0.3] (4,0) -- (5,0) -- (5,1) -- (4,1) -- cycle;
\draw[fill=purple,opacity=0.3] (4,3) -- (5,3) -- (5,4) -- (4,4) -- cycle;
\end{tikzpicture}
};
\draw[a] (0) -- (11);
\draw[a] (0) -- (12);
\draw[a] (0) -- (22);

\draw[a] (11) -- (21);
\draw[a] (11) -- (22);
\draw[a] (11) -- (31);

\draw[a] (12) -- (22);
\draw[a] (12) -- (23);
\draw[a] (12) -- (32);

\draw[a] (21) -- (31);

\draw[a] (22) -- (31);
\draw[a] (22) -- (32);

\draw[a] (23) -- (32);

\draw[a,dashed] (31) -- (41);
\draw[a,dashed] (32) -- (41);
\draw[a,dashed] (22) -- (41);
\end{tikzpicture}

\caption{
\label{fig:skeleton structure diagram for grid DNG}
Schematic structure digraphs for $\DNG(P_{m} \boxprod P_{n})$ and $\GEN(P_{m}\boxprod P_{n})$ with $m=2$ and $m>2,$ respectively. The shaded regions contain the selected vertices. The dashed arrows and the bottom-most completely shaded grid are only present in $\GEN(P_{m}\boxprod P_{n})$.}
\end{figure}

\begin{prop}\label{prop:Lattice DNG}
For grid graphs,
\[
\nim(\DNG(P_{m}\boxprod P_{n}))=\begin{cases}
0,&  m=2 \\
2\pty(m+n), & m\ge 3.
\end{cases}
\]
\end{prop}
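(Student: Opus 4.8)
The plan is to build the structure diagram of $\DNG(P_m\boxprod P_n)$ shown in Figure~\ref{fig:skeleton structure diagram for grid DNG} and read the nim-number off the source via type calculus. First I would assemble the combinatorial data. Specializing Proposition~\ref{prop:maximals lattice graphs} to $d=2$ gives $\mathcal{N}=\{N_1,N^1,N_2,N^2\}$, where each maximal nongenerating set is the complement of one of the four boundary sides (the two extreme columns and the two extreme rows). Hence every intersection set $I\in\mathcal{I}$ is $V$ with a union of sides deleted, so the sixteen subsets of the four sides index $\mathcal{I}$. Since opposite sides are disjoint while adjacent sides meet in a single corner vertex, inclusion--exclusion gives the cardinalities: deleting one column leaves $n(m-1)$ vertices, one row leaves $m(n-1)$, two opposite sides leave $n(m-2)$ or $m(n-2)$, two adjacent sides leave $(m-1)(n-1)$, three sides leave $(m-2)(n-1)$ or $(m-1)(n-2)$, and all four leave the interior $\Phi$ of size $(m-2)(n-2)$. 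When $m\ge 3$ all sixteen remaining sets are distinct (a non-corner vertex of any kept side distinguishes the classes); when $m=2$ the two columns partition $V$, so the four subsets containing both columns collapse to $\emptyset$, leaving the thirteen classes and the smaller diagram of Figure~\ref{fig:skeleton structure diagram for grid DNG}.

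Next I would reduce these cardinalities modulo $2$. Writing $p=\pty(m)$ and $q=\pty(n)$, a short table records the parity of each class: the four terminal classes $X_N$ (single-side deletions) have parities $q+pq$ for the columns and $p+pq$ for the rows, while $\pty(\Phi)=pq$. In particular, when $m+n$ is even we have $p=q$, and then both $q+pq$ and $p+pq$ vanish, so \emph{all} four maximal nongenerating sets have even parity. Proposition~\ref{prop:all-same} then immediately gives $\nim(\DNG(P_m\boxprod P_n))=0$, which agrees with $2\pty(m+n)=0$ for $m\ge 3$ and also disposes of the even subcase of $m=2$.

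It remains to treat the cases in which the terminal parities are not uniform, namely $m+n$ odd (for both $m=2$ and $m\ge 3$). Here Proposition~\ref{prop:all-same} no longer applies, and I would run type calculus on the diagram of Figure~\ref{fig:skeleton structure diagram for grid DNG}. Starting from the terminal classes $X_N$ and working back along the option arrows toward the source $X_\Phi$, I compute each $\type(X_I)=(\pty(I),\nim_0(X_I),\nim_1(X_I))$ from the recursive definitions of $\nim_0$ and $\nim_1$, using the parities tabulated above together with the option relation, which is easy to read off the figure. The nim-number of the game is then $\nim_0(X_\Phi)$, the first entry displayed in the source triangle. The expected outcome is $2$ when $m\ge 3$ and $0$ when $m=2$; heuristically, the extra interior columns present only for $m\ge 3$ supply the additional options at $X_\Phi$ that raise the mex from $0$ to $2$.

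The main obstacle is the type-calculus bookkeeping in the $m+n$ odd case. Because the pair $(\nim_0(X_I),\nim_1(X_I))$ at a class depends on the opposite-parity values of its options together with the self-referential term $\{\nim_{\pty(I)}(X_I)\}$ in the definition of $\nim_{1-\pty(I)}$, both entries must be tracked carefully through every level of the diagram rather than just the relevant one. A secondary complication is that the $m=2$ collapse produces the distinct, smaller diagram, so that computation must be carried out separately and its source value verified to be $0$ rather than $2$.
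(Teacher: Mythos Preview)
Your proposal is correct and follows essentially the same route as the paper: reduce the $\pty(m)=\pty(n)$ case to Proposition~\ref{prop:all-same} via the observation that every $N\in\mathcal{N}$ has even cardinality, and handle the $\pty(m)\neq\pty(n)$ case by running type calculus on the structure diagram of Figure~\ref{fig:skeleton structure diagram for grid DNG}, treating $m=2$ and $m\ge 3$ separately. Your write-up actually supplies more of the supporting combinatorics (the $16$ versus $13$ intersection sets and the explicit parity table) than the paper's terse proof, which simply points to the resulting simplified diagrams in Figure~\ref{fig:Grid Odd Even DNG}.
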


\begin{proof}
If $\pty(n)=\pty(m)$, the result follows from Propositions~\ref{prop:all-same} and~\ref{prop:maximals lattice graphs}. If $\pty(m)\neq\pty(n)$, then one can determine the parity of the structure classes and use type calculus to verify that the simplified structure diagrams for the cases $m=2$ and $m>2$ are as shown in Figure~\ref{fig:Grid Odd Even DNG}.
\end{proof}

\begin{figure}[h]
\centering
\begin{tabular}{ccc}
\includegraphics[scale=.4]{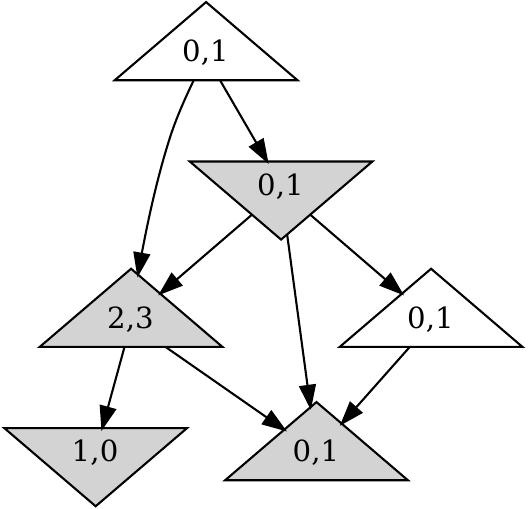} & &
\includegraphics[scale=.4]{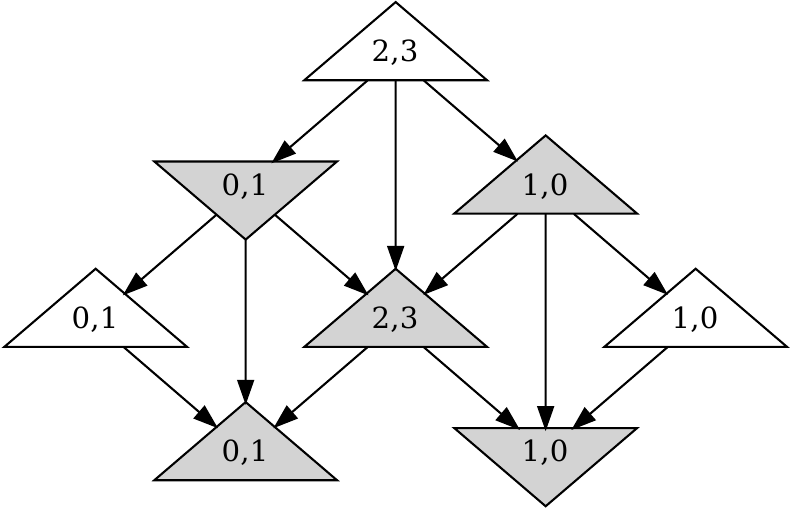}\\
$m=2$ & & $m>2$
\end{tabular}

\caption{
\label{fig:Grid Odd Even DNG}
Simplified structure diagrams for $\DNG(P_{m}\boxprod P_{n})$ with $\pty(m)$ different from $\pty(n)$.}
\end{figure}

\begin{prop}\label{prop:Lattice GEN}
For grid graphs, $\nim(\GEN(P_{m}\boxprod P_{n}))=\pty(mn)$.
\end{prop}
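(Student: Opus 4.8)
The plan is to reduce everything to the central point-reflection automorphism $\sigma$ of the grid, given by $\sigma(a_1,a_2)=(m+1-a_1,\,n+1-a_2)$. By Proposition~\ref{prop:maximals lattice graphs} the four maximal nongenerating sets $N_1,N^1,N_2,N^2$ are exactly the complements of the four sides, so by Remark~\ref{rem:Nisall} a position generates precisely when it meets all four sides, and $\sigma$ permutes $\mathcal{N}$ by swapping $N_1\leftrightarrow N^1$ and $N_2\leftrightarrow N^2$. The key observation I would isolate first is: if $S$ is $\sigma$-invariant and nongenerating, then $S\cup\{v\}$ is still nongenerating for every vertex $v$. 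Indeed, a $\sigma$-invariant nongenerating $S$ lies in some $N\in\mathcal{N}$, and applying $\sigma$ shows it lies in $N_1\cap N^1$ or in $N_2\cap N^2$; that is, $S$ misses an opposite pair of sides, and since no vertex lies on two opposite sides, adding one vertex cannot repair both, so $S\cup\{v\}$ still misses a side.

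First I would dispose of the case $\pty(mn)=0$. Here at least one of $m,n$ is even, so $\sigma$ has no fixed vertex and partitions $V$ into $mn/2$ antipodal pairs. The second player answers each move $v$ by its mirror $\sigma(v)$; this reply is always legal because the position before the first player moves is $\sigma$-invariant and $\sigma$ is fixed-point-free. Every position reached by the first player is therefore a $\sigma$-invariant set with one extra vertex, hence nongenerating by the key observation, so the first player can never complete a generating set. As the game is finite, the second player makes the completing move, and since a game is a second-player win exactly when its nim-number is $0$, we get $\nim(\GEN(P_m\boxprod P_n))=0=\pty(mn)$. Note this also covers $m=2$, where $\sigma$ is automatically fixed-point-free, so no separate treatment of $m=2$ is needed for $\GEN$.

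For $\pty(mn)=1$, both $m$ and $n$ are odd, $\sigma$ fixes only the central vertex $c$, and $c$ is interior since $m,n\ge 3$. The bound $\nim\ge 1$ comes from the same idea with the roles reversed: the first player opens with $c$ and thereafter mirrors, so every position the second player must move from is $\sigma$-invariant and nongenerating; by the key observation the second player can never generate, so the first player wins. To upgrade this to the exact value $\nim=1$ I would pass to the structure machinery: the structure digraph of $\GEN(P_m\boxprod P_n)$ is the explicit finite diagram of Figure~\ref{fig:skeleton structure diagram for grid DNG} (the version with the terminal class $X_V$ and the dashed edges), whose at most sixteen intersection classes have parities determined by $\pty(m)$ and $\pty(n)$; in particular $\Phi$ has size $(m-2)(n-2)$, so $\pty(\Phi)=\pty(mn)=1$. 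Running type calculus upward from $X_V$ and reading $\nim_0(X_\Phi)$, the second entry of $\type(X_\Phi)$, then yields $1$.

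I expect the genuine obstacle to be precisely this last step. The pairing argument only certifies that the first player wins when $mn$ is odd, that is, $\nim\ge 1$, and pinning the value to $1$ rather than to some larger odd nimber requires the full type-calculus bookkeeping on the structure diagram, or equivalently showing that no opening move leads to a subgame of nim-value $1$ (an opening at $c$ already supplies an option of nim-value $0$). The even case, by contrast, is clean because ``second player wins'' is equivalent to $\nim=0$, so no type calculus is needed there.
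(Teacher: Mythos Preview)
Your argument is correct. The mirroring strategy under the point reflection $\sigma$ is sound: the key observation that a $\sigma$-invariant nongenerating set misses an entire opposite pair of sides, and hence stays nongenerating after one more vertex is added, is exactly what is needed, and you verified the availability of the reply $\sigma(v)$ carefully. One small slip: in your last paragraph you write ``some larger odd nimber''; there is no parity constraint on nim-values, so this should simply read ``some larger nimber''.

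The paper takes a uniform but less illuminating route: it computes the parities of all (at most sixteen) intersection classes from Figure~\ref{fig:skeleton structure diagram for grid DNG} and runs type calculus in every case, reading the answer off the simplified structure diagrams in Figures~\ref{fig:Grid2GEN} and~\ref{fig:GridGEN}. Your treatment of the even case is genuinely more elementary and more conceptual: a single pairing strategy replaces the entire diagram computation, and it explains \emph{why} the second player wins rather than just certifying the nim-value. For the odd case you recover the same structure-diagram computation as the paper, but you add value by first isolating the easy half ($\nim\ge 1$ via the mirror strategy from the centre, equivalently $\nim(\{c\})=0$), so that the type calculus is only needed to rule out options of nim-value~$1$. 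What the paper's approach buys is uniformity and brevity: one mechanism handles all parities at once, and the argument generalises mechanically to similar structure digraphs. What your approach buys is insight in the even case and a cleaner separation of the two halves of the odd case.
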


\begin{proof}
One can determine the parity of the structure classes and use type calculus to verify that the simplified structure diagrams for the cases $m=2$ and $m>2$ are as shown in Figures~\ref{fig:Grid2GEN} and~\ref{fig:GridGEN}.
\end{proof}

\begin{figure}[h]
\centering
\begin{tabular}{cc}
\quad \quad \includegraphics[scale=.4]{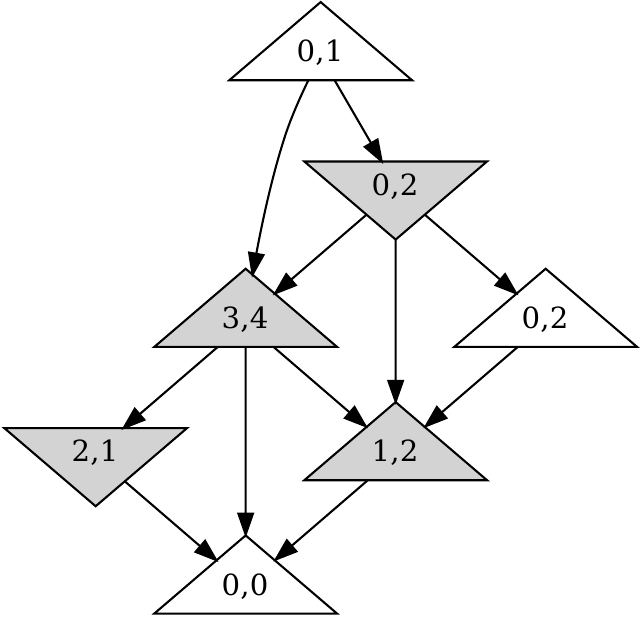} \quad \quad & \quad \quad 
\includegraphics[scale=.4]{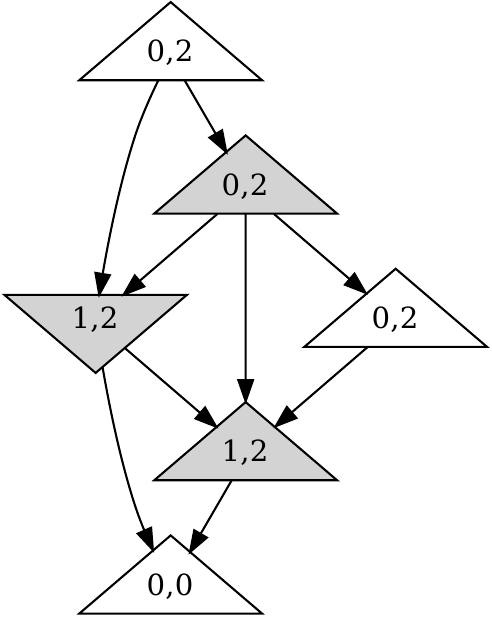} \quad \quad\\
2 $\times$ odd & 2 $\times$ even 
\end{tabular}
\caption{
\label{fig:Grid2GEN}
Simplified structure diagrams for $\GEN(P_{2}\boxprod P_{n})$.}
\end{figure}

\begin{figure}[h]
\centering
\begin{tabular}{ccc}
\quad \quad \includegraphics[scale=.4]{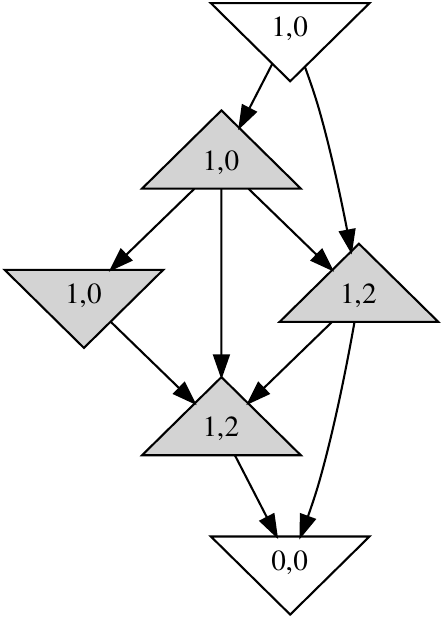} \quad \quad & \quad \quad \includegraphics[scale=.4]{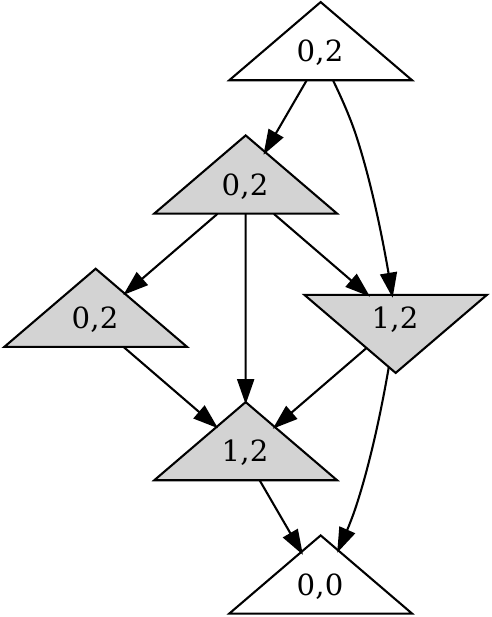} \quad \quad & \quad\quad \includegraphics[scale=.4]{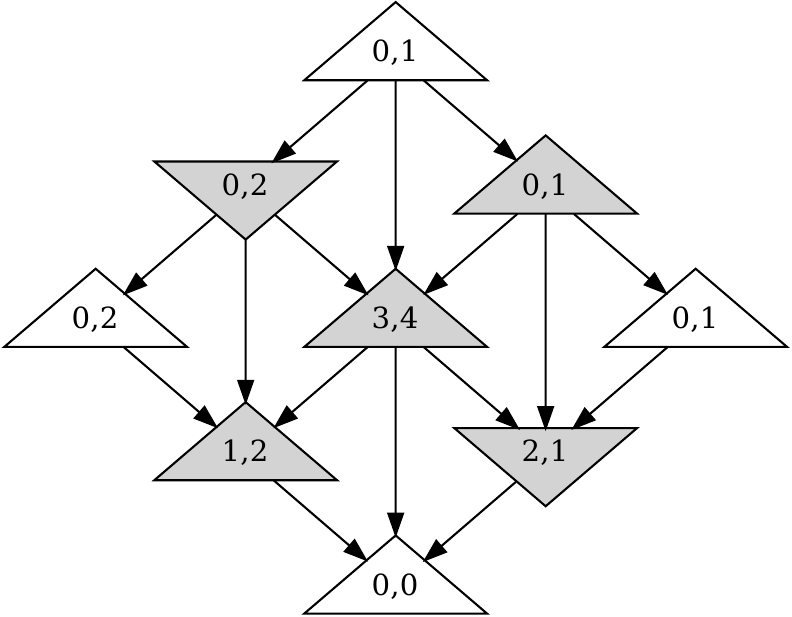} \quad\quad\\
odd $\times$ odd & even $\times$ even & even $\times$ odd
\end{tabular}
\caption{
\label{fig:GridGEN}
Simplified structure diagrams for $\GEN(P_{m}\boxprod P_{n})$ with $2<m$.}
\end{figure}

%---------------------%
\subsection{Complete multipartite graphs} 
%---------------------%
In this section we consider the \emph{complete multipartite graph} $G=K_{m_1,\ldots,m_k}$ with $k\ge 2$, $m_1\le m_2\le \cdots \le m_k$. The parts of $G$ that contain only one vertex are called \emph{small}, while the parts containing at least two vertices are called \emph{large}. We let $\sigma:=|\{i\mid m_i=1\}|$ and $\lambda:=|\{i\mid m_i\ge 2\}|$ be the number of small and large parts, respectively. Note that $k=\sigma+\lambda$.

If $\lambda=0$, then $G$ is a complete graph. If $\lambda=1$, then $G$ is a complete split graph where the small parts form a clique and the large part is an independent set. 
So we have the following result.

\begin{prop}
For a complete multipartite graph $G$ with $\lambda\in\{0,1\}$,  $\nim(\DNG(G))=1-\pty(V)$ and $\nim(\GEN(G))=\pty(V)$.
\end{prop}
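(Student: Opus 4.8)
The plan is to reduce directly to results already established for complete graphs and complete split graphs, handling the two cases $\lambda = 0$ and $\lambda = 1$ separately and verifying in each case that the parameter in the cited formula agrees with $\pty(V)$.

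First I would treat $\lambda = 0$. Then every part is a singleton, so $G = K_k$ is the complete graph on its $k \geq 2$ vertices. Every vertex of a complete graph is simplicial, and $V$ is itself a generating set consisting only of simplicial vertices, so Proposition~\ref{prop:simplicialGenerate} gives $\mathcal{G} = \{V\}$. Applying Propositions~\ref{prop:uniqueDNG} and~\ref{prop:uniqueGEN} with $L = V$ then yields $\nim(\DNG(G)) = 1 - \pty(V)$ and $\nim(\GEN(G)) = \pty(V)$, exactly as claimed; since $|V| = k \geq 2$, the degenerate singleton branch built into Proposition~\ref{prop:uniqueGEN} does not arise.

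Next I would treat $\lambda = 1$. Here there is a unique large part of size $m_k \geq 2$ and $\sigma = k - 1 \geq 1$ singleton parts. As observed immediately before the statement, the singletons are pairwise adjacent and each is adjacent to every vertex of the large part, so they form a clique $K_\sigma$ while the large part is an independent set; hence $G = K_\sigma + \overline{K}_{m_k}$ is a complete split graph with $m_k \geq 2$. Proposition~\ref{prop:split}, with $m = \sigma$ and $n = m_k$, then gives $\nim(\DNG(G)) = 1 - \pty(\sigma + m_k)$ and $\nim(\GEN(G)) = \pty(\sigma + m_k)$. Since $|V| = \sigma + m_k$, these equal $1 - \pty(V)$ and $\pty(V)$, which finishes this case and hence the proof.

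There is no genuinely hard step: the content is entirely bookkeeping, since both cases collapse onto graphs already analyzed. The only points that require care are confirming that the hypotheses of the cited results actually hold --- namely that the large part has size $\geq 2$ so that Proposition~\ref{prop:split} applies, and that $|V| \geq 2$ so that the exceptional $|L| = 1$ branch of Proposition~\ref{prop:uniqueGEN} is never triggered --- and then checking that $\pty(m + n)$ coincides with $\pty(V)$ in the split case.
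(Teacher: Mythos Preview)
Your proof is correct and follows essentially the same approach as the paper: the paper simply observes, in the text immediately preceding the proposition, that $\lambda=0$ gives a complete graph and $\lambda=1$ gives a complete split graph, and then states the result without further proof as a consequence of the earlier results on those families. Your write-up makes the same reduction explicit and carefully checks the hypotheses of Propositions~\ref{prop:simplicialGenerate}, \ref{prop:uniqueDNG}, \ref{prop:uniqueGEN}, and \ref{prop:split}; nothing in your argument differs in substance from the paper's intended reasoning.
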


\begin{prop}\label{max-multi}
If $G$ is a complete multipartite graph with $\lambda\ge 2$, then $\mathcal{N}$ consists of sets that contain exactly one vertex from each part of $G$.
\end{prop}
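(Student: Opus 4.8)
The plan is to characterize the nongenerating sets directly and then read off the maximal ones. Write $V_1,\dots,V_k$ for the parts, so that $uv\in E$ precisely when $u$ and $v$ lie in different parts. The first step is to record how the geodetic closure acts on a two-element set. If $u,v$ lie in different parts they are adjacent, so the only geodesic between them is the edge and $I[\{u,v\}]=\{u,v\}$. If instead $u,v$ lie in a common part $V_i$, then they are nonadjacent but every vertex outside $V_i$ is a common neighbor, so the length-two geodesics sweep out $I[\{u,v\}]=\{u,v\}\cup(V\setminus V_i)$.

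Next I would establish the key dichotomy: a set $S$ generates if and only if it contains two vertices from a common part. For one direction, if $S$ meets every part in at most one vertex, then all pairs of elements of $S$ lie in distinct parts and are therefore adjacent, so $I[S]=S$ and hence $[S]=S$; since $\lambda\ge 2$ forces some part to have size at least $2$, we have $S\ne V$, so $S$ is nongenerating. For the converse, suppose $u,v\in S$ lie in a common part $V_i$, which is necessarily large. The closure computation gives $[S]\supseteq V\setminus V_i$. Because $\lambda\ge 2$, there is a second large part $V_j$ with $j\ne i$, and $V_j\subseteq V\setminus V_i\subseteq[S]$ supplies two vertices of the same part; applying the convex-hull operator once more yields $[S]\supseteq V\setminus V_j\supseteq V_i$. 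Thus $[S]\supseteq(V\setminus V_i)\cup V_i=V$, so $S$ generates.

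Finally I would pass to maximal sets. By the dichotomy, the nongenerating sets are exactly those meeting each part in at most one vertex. Such a set $S$ fails to be maximal whenever it misses some part $V_l$, since any $w\in V_l$ may be adjoined while preserving the at-most-one-per-part property, hence preserving nongeneration. Conversely, a transversal that picks exactly one vertex from each part is nongenerating, and adjoining any further vertex duplicates a part and therefore generates. Hence $\mathcal{N}$ is precisely the family of transversals, as claimed.

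The main obstacle is the converse half of the dichotomy: one must check that a single repeated part, together with $\lambda\ge 2$, propagates through one more application of the hull operator to capture all of $V$. This is exactly where the hypothesis $\lambda\ge 2$ is used—when $\lambda=1$ (the complete split graph) a repeated part only produces its complementary clique and the hull stabilizes short of $V$, which is why that case is treated separately in the preceding proposition.
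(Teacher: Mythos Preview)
Your proof is correct and follows essentially the same approach as the paper: both arguments compute the geodetic closure of a pair in a common large part, use $\lambda\ge 2$ to find a second large part inside the hull and thereby sweep up the remaining vertices, and then observe that transversals are convex (hence nongenerating) while any extra vertex forces generation. Your version packages the argument as an explicit ``generates iff two vertices share a part'' dichotomy before passing to maximality, whereas the paper verifies maximality of transversals directly and then argues the converse, but the underlying computations are identical.
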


\begin{proof}
Let $N=\{v_1,v_2,\ldots,v_k\}$ such that each vertex $v_i$ is from a different part. Any geodesic between two vertices in $N$ has length $1$, so $[N]=N\not=V$.

Now consider $N':=N \cup \{v\}$ for some vertex $v \in V \setminus N$. Then there exists an $i \in \{1,2,\ldots,k\}$ such that $\{v,v_i\}$ are in the same large part, which we denote $C_1$. Since the geodesic between $v$ and $v_i$ is size 2, all vertices in $V \setminus C_1$ lie in these geodesics and are included in $[N']$, which shows $V \setminus C_1 \subset [N']$. Since $\lambda \ge 2,$ there exists an additional large part, which we denote $C_2$, such that $C_2 \subset [N']$. The remaining vertices in $C_1$ lie in geodesics of size 2 between the vertices in $C_2$, which shows $C_1 \subset [N']$ and thus, $[N']=V$. 

Now let $L$ be a maximal nongenerating set. Since two vertices from a single large part generate $G$, $L$ must contain at most one vertex from each part. Since $L$ is maximal, $L$ must contain exactly one vertex from each part.
\end{proof}

\begin{corollary}
\label{cor:multipartite frattini}
For a complete multipartite graph, $\Phi$ consists of all the vertices from the small parts. 
% for $k\ge 2$ and $\lambda\ge 1$ ?
\end{corollary}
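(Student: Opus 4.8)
The plan is to read off $\Phi=\bigcap\mathcal{N}$ directly from the description of $\mathcal{N}$ supplied by Proposition~\ref{max-multi}, which is exactly the regime $\lambda\ge 2$ in which that proposition applies; the degenerate case $\lambda\le 1$ I would dispatch separately.

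Assume $\lambda\ge 2$, so that by Proposition~\ref{max-multi} every $N\in\mathcal{N}$ contains precisely one vertex from each part. First I would establish that each small-part vertex belongs to $\Phi$. Since a small part is a singleton $\{v\}$, the only vertex it can contribute to an $N\in\mathcal{N}$ is $v$ itself, so $v\in N$ for every $N\in\mathcal{N}$ and hence $v\in\bigcap\mathcal{N}=\Phi$. Conversely, I would show that a vertex $v$ lying in a large part $C$ is excluded from $\Phi$. Because $|C|\ge 2$, I can pick $w\in C$ with $w\neq v$, and then select $w$ together with one vertex from each of the remaining parts; by Proposition~\ref{max-multi} this set is a member of $\mathcal{N}$ that meets $C$ only in $w$, so it omits $v$. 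Therefore $v\notin\Phi$. Combining the two inclusions gives that $\Phi$ is exactly the union of the small parts.

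I do not anticipate any real obstacle: once Proposition~\ref{max-multi} is available, the argument is essentially a one-line intersection computation, the only point requiring a moment's care being the explicit construction of a maximal nongenerating set that avoids a prescribed large-part vertex, which is immediate from the freedom to choose any single vertex from each part. For completeness I would append a remark on the boundary: when $\lambda=1$ the graph is a complete split graph and Proposition~\ref{prop:unique min gen set} already yields $\Phi=V(\overline{K}_n)^{c}$, namely the small parts, in agreement with the stated claim.
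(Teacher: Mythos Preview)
Your argument is correct and matches the paper's intent: the corollary is stated without proof immediately after Proposition~\ref{max-multi}, so the paper is treating it as an immediate consequence of that description of $\mathcal{N}$, which is exactly the intersection computation you spell out. One small caveat: the corollary inherits the hypothesis $\lambda\ge 2$ from Proposition~\ref{max-multi} (and indeed the statement fails for $\lambda=0$, since $\Phi(K_k)=\emptyset$ while every part is small), so your side remark about $\lambda\le 1$ should be restricted to $\lambda=1$, where your appeal to Proposition~\ref{prop:unique min gen set} is valid.
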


\begin{prop}
For a complete multipartite graph $G$ with $\lambda\ge 2$, $\nim(\DNG(G))=\pty(\lambda+\sigma)$.
\end{prop}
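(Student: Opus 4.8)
The plan is to recognize this as an immediate application of Proposition~\ref{prop:all-same}, which asserts that if every maximal nongenerating set has the same parity $r$, then $\nim(\DNG(G))=r$. Thus the entire task reduces to verifying that the sets in $\mathcal{N}$ all share a common cardinality parity and then identifying that parity.

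First I would invoke Proposition~\ref{max-multi}, which (crucially using the hypothesis $\lambda\ge 2$) characterizes $\mathcal{N}$ as the family of transversals of the parts, that is, sets containing exactly one vertex from each part of $G$. Since $G=K_{m_1,\ldots,m_k}$ has exactly $k=\sigma+\lambda$ parts, every such transversal has precisely $k$ elements. Hence $|N|=\sigma+\lambda$ for all $N\in\mathcal{N}$, so every set in $\mathcal{N}$ has the common parity $\pty(\sigma+\lambda)$. Applying Proposition~\ref{prop:all-same} with $r=\pty(\sigma+\lambda)$ then yields $\nim(\DNG(G))=\pty(\lambda+\sigma)$ at once.

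There is essentially no obstacle to overcome, as all the structural difficulty was already absorbed into Proposition~\ref{max-multi}; the only remaining step is the trivial cardinality count showing that a transversal of $k$ parts has $k$ vertices. The one point worth emphasizing is that the hypothesis $\lambda\ge 2$ is indispensable precisely because it is what makes Proposition~\ref{max-multi} applicable: for $\lambda\in\{0,1\}$ the graph is complete or complete split and the description of $\mathcal{N}$ is entirely different (handled by the preceding proposition), so the uniform-cardinality argument above would not apply. Under $\lambda\ge 2$, however, the result is a direct corollary of the two cited propositions.
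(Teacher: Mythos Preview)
Your proposal is correct and matches the paper's own proof essentially verbatim: both invoke Proposition~\ref{max-multi} to conclude that every $N\in\mathcal{N}$ has size $k=\sigma+\lambda$, and then apply Proposition~\ref{prop:all-same}. Your additional commentary on why the hypothesis $\lambda\ge 2$ is needed is accurate and a nice clarification, but the argument itself is identical to the paper's.
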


\begin{proof}
The size of each maximal nongenerating set is $k$ by Proposition~\ref{max-multi}, so the result follows by Proposition~\ref{prop:all-same}.
\end{proof}

\begin{prop}
For a complete multipartite graph $G$ with $\lambda\ge 2$,
\[
\nim(\GEN(G))=\begin{cases}
%\pty(V), & \lambda\in\{0,1\} \\
2\pty(\sigma), & \pty(\lambda)=0 \\
\pty(\sigma), & \pty(\lambda)=1.
\end{cases}
\]
\end{prop}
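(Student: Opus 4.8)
The plan is to compute $\nim(\GEN(G))$ directly from the structure diagram via type calculus, exactly as in the grid and wheel cases. The first step is to pin down the intersection sets and structure classes. By Proposition~\ref{max-multi} every $N\in\mathcal{N}$ is a transversal selecting one vertex from each part, so a position $P$ fails to generate precisely when it meets each large part in at most one vertex. Computing $\lceil P\rceil=\bigcap\mathcal{N}_P$, the small-part vertices (all of $\Phi$) always survive, a large part contributes its chosen vertex exactly when $P$ already meets that part, and an untouched large part contributes nothing because it contains at least two vertices that distinct extending transversals may select. Hence $\lceil P\rceil=\Phi\cup(P\cap L)$, where $L$ denotes the union of the large parts, so the structure class of $P$ is determined solely by the partial transversal $T:=P\cap L$. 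In particular the small-part vertices in $P$ are invisible to the class and enter only through $\pty(P)$.

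Using the automorphisms permuting vertices within a part, the type of a class depends only on $t:=|T|$, the number of large parts already met; I write $X^{(t)}$ for a representative class and note that its largest element $\Phi\cup T$ has parity $\pty(\sigma+t)$. The options of $\Phi\cup T$ are then easy to read off: adding a vertex in an untouched large part moves to a class of type $X^{(t+1)}$ (available when $t<\lambda$), while adding a second vertex to an already-met large part produces a generating set and hence the terminal class $X_V$ (available when $t\ge 1$, using $\lambda\ge 2$ as in the proof of Proposition~\ref{max-multi}). Adding a small-part vertex keeps the class fixed while flipping parity; this intra-class move is exactly what the self-referential term in the type-calculus recursion records, so it requires no separate treatment.

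With the digraph in hand I would run the recursion from $t=\lambda$ downward. The base class $X^{(\lambda)}$ has only the option $X_V$, which gives type $(\nim_0,\nim_1)=(1,2)$ when $\pty(\sigma+\lambda)=0$ and $(2,1)$ when $\pty(\sigma+\lambda)=1$. The key simplification is that each of the pairs $(1,2)$ and $(2,1)$ is a \emph{fixed point} of the update: if the options of a class are exactly that type together with $X_V=(0,0)$, then the recursion reproduces the same pair, independently of the class's own parity. Consequently every $X^{(t)}$ with $1\le t\le\lambda$ inherits the type of $X^{(\lambda)}$, collapsing all the intermediate classes at once. It then remains only to treat the source class $X^{(0)}=X_\Phi$, whose sole option is $X^{(1)}$ and whose representative $\Phi$ has parity $\pty(\sigma)$, and to read off $\nim(\GEN(G))=\nim_0(X_\Phi)$ since $\emptyset\in X_\Phi$ has even parity. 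Splitting into the four cases determined by $\pty(\sigma)$ and $\pty(\lambda)$ then produces exactly the values in the claimed formula.

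The main obstacle is the second step above: correctly identifying the structure classes, and in particular the observation that the small-part vertices do not affect $\lceil P\rceil$ and therefore contribute only to parity. Once this is in place, the fixed-point property of $(1,2)$ and $(2,1)$ makes the remainder of the type calculus essentially mechanical. The only residual care needed is the boundary bookkeeping at $t=0$ (where $X_V$ is not an option) and at $t=\lambda$ (where there is no option $X^{(t+1)}$), together with the degenerate case $\sigma=0$, in which each class is a single position but the recursion still returns the correct value $\nim_0(X_\Phi)$.
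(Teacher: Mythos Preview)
Your proposal is correct and follows essentially the same approach as the paper: both identify the nonterminal structure classes with partial transversals indexed by the number $t=\iota(I)$ of large parts met, determine that the options of $X^{(t)}$ are $X^{(t+1)}$ (for $t<\lambda$) together with $X_V$ (for $t\ge 1$), and then run the type calculus downward from $t=\lambda$ to compute $\type(X_\Phi)$ in the four parity cases. Your explicit derivation of $\lceil P\rceil=\Phi\cup(P\cap L)$ and your framing of the inductive step as a fixed-point property of the pairs $(1,2)$ and $(2,1)$ are slightly more detailed than the paper's presentation, but the argument is the same.
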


\begin{proof}
For a nonterminal structure class $X_I$, define $\iota(I):=|I\cap(V\setminus \Phi)|$ to be the number of vertices from large parts contained in $I$. Note that $\iota(\Phi)=0$ and $\iota(N)=\lambda$ for all $N\in\mathcal{N}$.
Proposition~\ref{max-multi} and Corollary~\ref{cor:multipartite frattini} imply that $X_I$ is a nonterminal structure class if and only if $I$ contains every vertex in a small part and at most one vertex from each large part of $G$. It also implies that if $X_J$ is an option of $X_I$, then $\iota(J)=\iota(I)+1$ or $J=V$.

If $\iota(I)=\lambda$, then the only option of $X_I$ is $X_V$. Suppose $1\leq \iota(I)<\lambda$. Then $I$ contains a vertex from at least one large part $C$, so $X_I$ has $X_V$ as an option by selecting a second element of $C$.  Every other option $X_J$ satisfies $\iota(J)=i(I)+1$, since $J$ will intersect exactly one more large part than $I$ does. Type calculus and induction on $\iota(I)$ from $\lambda$ to $1$ shows that  
\[
\type(X_I)=\begin{cases}
(1-\pty(\lambda-\iota(I)),2,1), & \pty(\sigma+\lambda)=1\\
(\pty(\lambda-\iota(I)),1,2), & \pty(\sigma+\lambda)=0
\end{cases}
\]
for $1\le \iota(I)\le\lambda$, as shown in Figure~\ref{fig:multipartiteLargeLambda}.

By Corollary~\ref{cor:multipartite frattini}, every option of $X_{\Phi}$ is of the form $X_J$ with $J=\lceil \Phi\cup \{v\}\rceil=\Phi\cup \{v\}$ for some $v\in V\setminus \Phi$. Hence every option $X_J$ of $X_{\Phi}$ satisfies $\iota(J)=1$. Thus 
\[
\type(X_\Phi)= 
\begin{cases}
(0,0,1), & \pty(\sigma)=0,\pty(\lambda)=1\\
(1,2,0), & \pty(\sigma)=1,\pty(\lambda)=0\\
(1,1,0), & \pty(\sigma)=1,\pty(\lambda)=1\\
(0,0,2), & \pty(\sigma)=0,\pty(\lambda)=0,
\end{cases}
\]
which justifies the claim.
\end{proof}

\begin{figure}[h]
\centering
\begin{tabular}{ccccc}
\begin{tikzpicture}[scale=.9]
\node at (0,-.25) {};
\node at (0,1) {$\lambda$};
\node at (0,2) {$\lambda-1$};
\node at (0,3) {$\lambda-2$};
\node at (0,4) {$\vdots$};
\end{tikzpicture} &
\includegraphics[scale=.4]{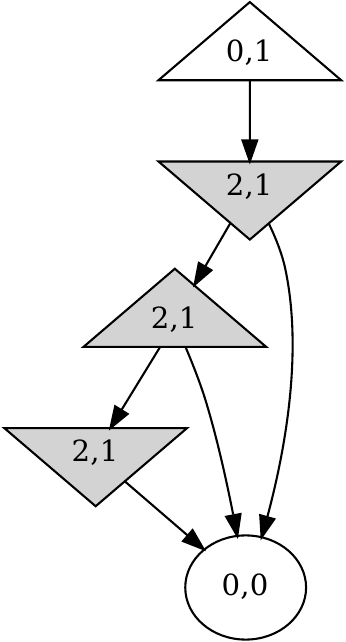} &
\includegraphics[scale=.4]{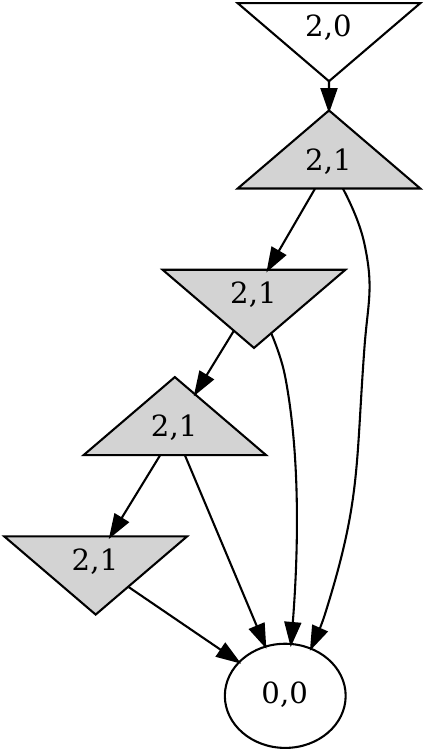} &
\includegraphics[scale=.4]{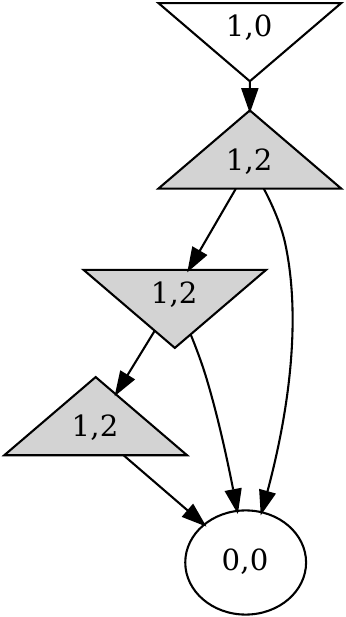} &
\includegraphics[scale=.4]{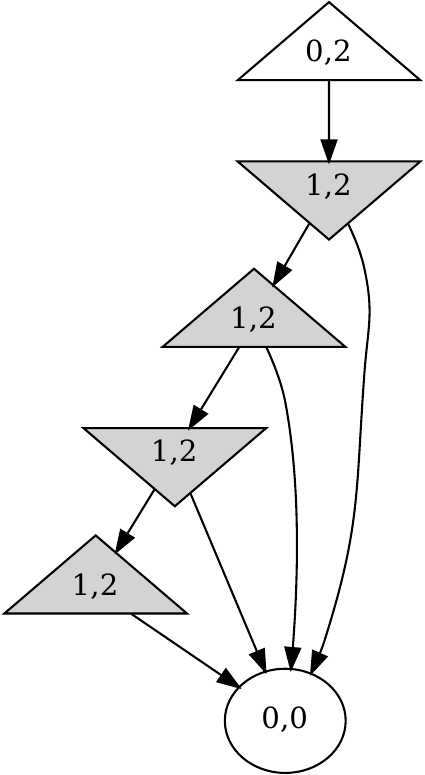}\\
$\iota(I)$ & $\pty(\sigma)=0$ & $\pty(\sigma)=1$ & $\pty(\sigma)=1$ & $\pty(\sigma)=0$\\
        & $\pty(\lambda)=1$ & $\pty(\lambda)=0$ & $\pty(\lambda)=1$ & $\pty(\lambda)=0$
\end{tabular}

\caption{\label{fig:multipartiteLargeLambda}
Simplified structure diagrams for $\GEN(G)$ on a complete multipartite graph $G$ with $\lambda\ge 2$. The number of gray triangles is $\lambda$ so that $\tau(\Phi)=\lambda$.  The parity of $X_V$ can be both 0 and 1. This is indicated by an oval.}
\end{figure}

The following special case handles complete bipartite graphs.

\begin{corollary}
If $n\ge m \ge 2$, then $\nim(\DNG(K_{m,n}))=0=\nim(\GEN(K_{m,n}))$.
\end{corollary}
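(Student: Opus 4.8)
The plan is to recognize $K_{m,n}$ as a complete multipartite graph $K_{m_1,\ldots,m_k}$ with exactly $k=2$ parts, and then simply substitute the relevant parameters into the two formulas already established for complete multipartite graphs with $\lambda\ge 2$. Since $n\ge m\ge 2$, both parts have at least two vertices, so both are large. This gives $\lambda=2$ and $\sigma=0$, and in particular $\lambda\ge 2$, so the hypotheses of the two preceding propositions are met.

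First I would handle $\DNG$. The proposition stating $\nim(\DNG(G))=\pty(\lambda+\sigma)$ for $\lambda\ge 2$ applies directly, and substituting $\lambda=2$, $\sigma=0$ yields $\nim(\DNG(K_{m,n}))=\pty(2)=0$. Next I would handle $\GEN$ using the proposition that computes $\nim(\GEN(G))$ for $\lambda\ge 2$ as $2\pty(\sigma)$ when $\pty(\lambda)=0$ and as $\pty(\sigma)$ when $\pty(\lambda)=1$. Here $\pty(\lambda)=\pty(2)=0$, so we land in the first case and obtain $\nim(\GEN(K_{m,n}))=2\pty(\sigma)=2\pty(0)=0$.

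Because this is a pure specialization of earlier results, there is essentially no obstacle; the only thing to verify is the bookkeeping that $m,n\ge 2$ forces both parts to be large (hence $\lambda=2$, $\sigma=0$) rather than, say, one part being a singleton. This is exactly why the hypothesis $m\ge 2$ is needed: if $m=1$ we would have a complete split graph with $\lambda=1$, which is governed by a different proposition and could give a different nim-value. I would therefore briefly emphasize that the assumption $n\ge m\ge 2$ is precisely what places us in the $\lambda\ge 2$ regime, after which both conclusions follow immediately from the cited formulas.

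\begin{proof}
Since $n\ge m\ge 2$, the graph $K_{m,n}$ is a complete multipartite graph whose two parts are both large, so $\lambda=2$ and $\sigma=0$. In particular $\lambda\ge 2$, so the preceding propositions apply. For the avoidance game we obtain
\[
\nim(\DNG(K_{m,n}))=\pty(\lambda+\sigma)=\pty(2)=0.
\]
For the achievement game we have $\pty(\lambda)=\pty(2)=0$, so
\[
\nim(\GEN(K_{m,n}))=2\pty(\sigma)=2\pty(0)=0. \qedhere
\]
\end{proof}
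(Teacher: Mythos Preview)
Your proof is correct and follows exactly the approach the paper intends: the corollary is stated immediately after the two propositions for complete multipartite graphs with $\lambda\ge 2$ as a direct specialization, and you have carried out that specialization with $\lambda=2$, $\sigma=0$.
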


%---------------------%
\subsection{Wheel graphs}
%---------------------%

For $n \geq 5$, the wheel graph $W_{n}$ is the join $K_1+C_{n-1}$ with vertices in $V(K_1)=\{c\}$ and $V(C_{n-1})=\{v_1, \ldots, v_{n-1}\}$.  It is easy to see that the geodetic closure and the convex hull of a set are the same for wheel graphs, so the results of this section also apply to geodetic closure games.

\begin{prop}\label{cor:max-wheel}
For wheel graphs, $\mathcal{N}$ consists of complements of sets containing two neighboring noncentral vertices.    
\end{prop}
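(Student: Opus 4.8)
The plan is to pin down the generating sets of $W_n$ through a local analysis of geodesics, and then read off $\mathcal{N}$ by complementation. First I would record the distance structure of $W_n = K_1 + C_{n-1}$: since $c$ is adjacent to every rim vertex, $d(c,v_k)=1$, adjacent rim vertices are at distance $1$, and any two rim vertices at cyclic distance $\ge 2$ are at distance $2$ (via $c$). Because every pairwise distance is at most $2$, a vertex can be interior to a geodesic only as the midpoint of a length-$2$ geodesic, i.e.\ as a common neighbor of two vertices that are themselves at distance $2$. Applied to a rim vertex $v_k$, whose neighbors are $c, v_{k-1}, v_{k+1}$, the only such pair at distance $2$ is $\{v_{k-1},v_{k+1}\}$ (here I use $n-1\ge 4$, so $v_{k-1}\not\sim v_{k+1}$). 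This gives the key local lemma: if $v_k \in [S]\setminus S$, then both cyclic neighbors $v_{k-1}$ and $v_{k+1}$ lie in $[S]$.

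Next I would establish the generation criterion: $[S]=V$ if and only if $S^c$ contains no two neighboring noncentral vertices (equivalently, $S$ meets every edge of the rim cycle). For the blocking direction, suppose $v_i,v_{i+1}\notin S$ and consider the first of these two to enter the hull during the iteration $S\subseteq I[S]\subseteq I^2[S]\subseteq\cdots$; the local lemma forces the other to already be present, a contradiction, so neither ever enters and $[S]\ne V$. (Invoking the remark in this subsection that $I$ is idempotent on wheels collapses this to a single application of $I$.) For the converse, if $S^c$ has no neighboring noncentral pair, then every rim vertex omitted by $S$ has both cyclic neighbors in $S$, so one application of $I$ puts the entire rim into the hull; since $n-1\ge 4$ the rim contains two nonadjacent vertices whose geodesic passes through $c$, so $c$ enters as well and $[S]=V$.

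Finally I would deduce the proposition. By the criterion, $N$ is nongenerating exactly when $N^c$ contains some neighboring noncentral pair $\{v_i,v_{i+1}\}$, in which case $N\subseteq V\setminus\{v_i,v_{i+1}\}$, a set that is itself nongenerating; hence every maximal nongenerating set has the form $V\setminus\{v_i,v_{i+1}\}$. Conversely, each such set is nongenerating, and adding either omitted vertex leaves a complement with no neighboring noncentral pair, hence a generating set, so $V\setminus\{v_i,v_{i+1}\}$ is maximal. Thus $\mathcal{N}$ is precisely the family of complements of neighboring noncentral pairs. The main obstacle is the local geodesic lemma together with the first-entry (or idempotency) argument: one must argue carefully that a rim vertex has no route into the convex hull other than through its two cyclic neighbors, so that an omitted adjacent pair mutually blocks each other from ever entering.
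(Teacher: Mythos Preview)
Your argument is correct, but it takes a different route from the paper. You first establish a full generation criterion---$[S]=V$ if and only if $S^c$ contains no adjacent rim pair---via a local geodesic lemma and a first-entry (or idempotency) argument, and only then read off $\mathcal{N}$ by complementation. The paper instead works directly at the level of maximal nongenerating sets: it observes that $V\setminus\{c\}$ generates, so any $N\in\mathcal{N}$ omits some rim vertex $v_i$; then, because $N$ is convex (Proposition~\ref{prop:maximal is closed}) and $v_i$ lies on the geodesic from $v_{i-1}$ to $v_{i+1}$, at least one of these two neighbors must also be omitted, forcing $N\subseteq\{v_i,v_{i+1}\}^c$ and hence equality by maximality. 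Your approach is more explicit about the geodesic structure and yields the generation criterion as a useful byproduct; the paper's approach is shorter because it exploits the convexity of maximal nongenerating sets to avoid ever characterizing arbitrary generating sets.
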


\begin{proof}
It is clear that  $\{v_i,v_{i+1}\}^c$ is a maximal nongenerating set for all $i$.  To see the other containment, let $N \in \mathcal{N}$.     Note that $V \setminus \{c\}$ is a generating set, so  $v_i \not\in N$ for some $i$. Since $v_i$ is on a geodesic between $v_{i-1}$ and $v_{i+1}$, we may assume without loss of generality that $v_{i+1} \not\in N$.  Hence $N \subseteq \{v_i,v_{i+1}\}^c$.  Since $\{v_i,v_{i+1}\}^c$ is a  maximal nongenerating set, we conclude that $N=\{v_i,v_{i+1}\}^c$.
\end{proof}

\begin{corollary}
For wheel graphs, $\Phi = \{c\}$. 
\end{corollary}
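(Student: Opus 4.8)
The plan is to derive this directly from Proposition~\ref{cor:max-wheel} together with a single application of De Morgan's law. By definition, $\Phi=\bigcap\mathcal{N}$, and Proposition~\ref{cor:max-wheel} tells us precisely which sets belong to $\mathcal{N}$: each $N\in\mathcal{N}$ has the form $\{v_i,v_{i+1}\}^c$, where the indices are read modulo $n-1$. So the entire task reduces to computing the intersection of these complements.

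The key computation is to pass to complements and take the union instead:
\[
\Phi=\bigcap_{i=1}^{n-1}\{v_i,v_{i+1}\}^c=\left(\bigcup_{i=1}^{n-1}\{v_i,v_{i+1}\}\right)^c.
\]
The pairs $\{v_i,v_{i+1}\}$ are exactly the edges of the rim cycle $C_{n-1}$, so their union is the set of all noncentral vertices $\{v_1,\ldots,v_{n-1}\}=V\setminus\{c\}$; here one uses only that $n-1\ge 3$ (guaranteed since $n\ge 5$) so that every noncentral vertex $v_j$ lies in at least one such pair, for instance $\{v_{j-1},v_j\}$. Taking complements gives $\Phi=\{c\}$, as claimed.

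As a sanity check one can argue elementwise rather than via De Morgan: the center $c$ is never one of the two neighboring noncentral vertices being removed, so $c\in\{v_i,v_{i+1}\}^c$ for every $i$ and hence $c\in\Phi$; conversely, each $v_j$ is missing from the particular maximal nongenerating set $\{v_{j-1},v_j\}^c\in\mathcal{N}$, so $v_j\notin\Phi$. I expect no real obstacle in this argument—the content is entirely carried by the preceding proposition—so the only point worth stating carefully is that the rim edges of $C_{n-1}$ cover all of the noncentral vertices, which is what forces the intersection down to the single vertex $c$.
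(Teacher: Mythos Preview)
Your argument is correct and follows the same approach as the paper, which simply states the corollary without proof as an immediate consequence of Proposition~\ref{cor:max-wheel}. You have just written out explicitly the De Morgan computation that the paper leaves implicit.
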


\begin{prop}
For wheel graphs, $\nim(\DNG(W_{n}))=\pty(n)$.
\end{prop}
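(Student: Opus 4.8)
The plan is to invoke Proposition~\ref{prop:all-same}, which states that if every set in $\mathcal{N}$ has the same parity $r$, then $\nim(\DNG(G))=r$. So the entire proof reduces to checking that all maximal nongenerating sets of $W_n$ share a common parity and identifying that parity.

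First I would recall from Proposition~\ref{cor:max-wheel} that $\mathcal{N}$ consists precisely of the complements $\{v_i,v_{i+1}\}^c$ of pairs of neighboring noncentral vertices. Since $W_n=K_1+C_{n-1}$ has $|V|=n$ vertices, each such set has cardinality $|V|-2=n-2$. In particular, every $N\in\mathcal{N}$ has the same size, hence the same parity $\pty(N)=\pty(n-2)=\pty(n)$, using that subtracting $2$ does not change parity.

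With the common parity identified as $r=\pty(n)$, Proposition~\ref{prop:all-same} immediately yields $\nim(\DNG(W_n))=\pty(n)$, completing the argument.

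There is essentially no obstacle here: the statement is a direct corollary of the structural description of $\mathcal{N}$ in Proposition~\ref{cor:max-wheel}, combined with the parity criterion of Proposition~\ref{prop:all-same}. The only thing requiring a moment's care is the vertex count—remembering that $W_n$ has $n$ vertices (one central plus the $n-1$ rim vertices of $C_{n-1}$), so that each maximal nongenerating set, being the complement of a two-element set, has size $n-2$ rather than $n-1$ or some other value. Once this bookkeeping is confirmed, the conclusion follows mechanically.
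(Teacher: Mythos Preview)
Your proof is correct and follows exactly the same approach as the paper: both invoke Proposition~\ref{cor:max-wheel} to conclude that every $N\in\mathcal{N}$ has size $n-2$, and then apply Proposition~\ref{prop:all-same} to obtain $\nim(\DNG(W_n))=\pty(n-2)=\pty(n)$. The paper's version is simply terser.
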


\begin{proof}
By Proposition~\ref{cor:max-wheel}, each set in $\mathcal{N}$ has size $n-2$. The result now follows by Proposition~\ref{prop:all-same}.
\end{proof}

\begin{figure}[h]
\begin{tikzpicture}[scale=.6]
\node[vert] (r4) at (2,0) {\scriptsize$\alpha_4$};
\node[vert] (r3) at (2.4,1.4) {\scriptsize$\alpha_3$};
\node[vert] (r1) at (5.6,1.4) {\scriptsize$\alpha_2$};
\node[vert] (r0) at (6.0,0) {\scriptsize$\alpha_1$};
\path [b] (r0) -- (r1);
\path [b] (r4) -- (r3);
\node[outer sep=2mm] (rr3) at (r3) {};
\node[outer sep=2mm] (rr0) at (r0) {};
\node[outer sep=2mm] (rr4) at (r4) {};
\node[outer sep=2mm] (rr1) at (r1) {};
\draw[<->,red] (rr1) edge (rr4);
\draw[<->,red] (rr0) edge (rr3);
\end{tikzpicture}
$\quad$ 
\begin{tikzpicture}[rotate=24,scale=.8]
\node[vert] (l0) at (-2,0) {\scriptsize$\alpha_1$};
\node[vert] (l1) at (-2.6,1.4) {\scriptsize$\alpha_2$};
\node[vert] (l2) at (-4,2) {\scriptsize$\alpha_3$};
\node[vert] (l3) at (-5.4,1.4) {\scriptsize$\alpha_4$};
\path [b] (l0) to (l1) to (l2) to (l3);
\node[outer sep=2mm] (ll2) at (l2) {};
\node[outer sep=2mm] (ll0) at (l0) {};
\node[outer sep=2mm] (ll3) at (l3) {};
\node[outer sep=2mm] (ll1) at (l1) {};
\draw[<->,red] (ll2) edge (ll0); 
\draw[<->,red] (ll3) edge (ll1);
\end{tikzpicture}

\caption{\label{fig:UE}Pairing strategy represented by arrows for the usual endgame. Two vertices are adjacent if and only if they are connected by an edge.}
\end{figure}

\begin{prop}
For wheel graphs, $\nim(\GEN(W_n))=\begin{cases}
2, & n=5\\
\pty(n), & n\geq 6.
\end{cases}$
\end{prop}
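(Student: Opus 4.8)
The plan is to run the type calculus of Section~4 on the structure digraph of $\GEN(W_n)$, using the description of $\mathcal{N}$ from Proposition~\ref{cor:max-wheel} together with $\Phi=\{c\}$. First I would translate the generating condition into rim language: by Proposition~\ref{cor:max-wheel} a position $P$ fails to generate exactly when it omits two consecutive rim vertices, so $P$ generates if and only if the rim vertices it omits form an independent set of $C_{n-1}$, equivalently the selected rim vertices cover every edge of $C_{n-1}$. Since $\Phi=\{c\}$, the hub $c$ is a nongenerator; I would record the consequence that $\lceil P\cup\{c\}\rceil=\lceil P\rceil$, so selecting $c$ never changes the structure class and merely flips parity. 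In particular every nonterminal structure class $X_I$ contains both $I$ and $I\setminus\{c\}$ and hence positions of both parities, which is precisely the situation the $(\nim_0,\nim_1)$ type calculus is designed to handle.

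Next I would enumerate the intersection sets. Because $N_i=\{v_i,v_{i+1}\}^c$, every $I\in\mathcal{I}$ is $V$ with a union of rim edges removed; equivalently $I^c$ is a subset of the rim all of whose maximal runs of consecutive vertices have length at least two, and $c\in I$ always. This makes $\mathcal{I}$ a poset indexed by such run configurations, and the rotational symmetry of $C_{n-1}$ collapses it to a manageable list of classes. I would then compute $\Opt(X_I)$: from $X_I$ one may select $c$ (staying in $X_I$) or a rim vertex $v\notin I$, the latter either merging two runs, extending a run, or opening a new length-two run, with $X_V$ reached precisely when the selection completes an edge cover of $C_{n-1}$.

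The computation would proceed by induction from the classes adjacent to $X_V$ down to the source $X_\Phi$, whose second type entry is $\nim(\GEN(W_n))$. The crux is the endgame: when only a short uncovered configuration remains, namely two disjoint rim edges or a rim path on four vertices, the reachable options pair up, and I would prove a lemma, illustrated by the pairing arrows of Figure~\ref{fig:UE}, showing that the responding player can always answer a move by its partner. This pins the parity of the remaining play and supplies the base types for the induction, yielding $\nim(\GEN(W_n))=\pty(n)$ for $n\ge 6$; the value collapses to the parity bit because, for $m=n-1\ge 5$, the uncovered endgame configurations are long enough for the pairing to apply.

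Finally I would treat $n=5$ separately. Here the rim is $C_4$, whose only maximal independent sets are the two antipodal pairs, so every minimal generating set has size two and the endgame of Figure~\ref{fig:UE} degenerates; the structure digraph is then small enough to evaluate directly, as in Figure~\ref{fig:DNGW5 and GENW5}, giving $\nim(\GEN(W_5))=2$. I expect the main obstacle to be the endgame pairing lemma together with the bookkeeping of the intersection-set poset: unlike the complete multipartite case, where $\mathcal{I}$ is essentially a chain graded by $\iota$, here $\mathcal{I}$ is a genuinely two-dimensional poset of run configurations, so organizing the induction and checking that no stray option pushes the $\mex$ above the parity bit for $n\ge 6$ is where the real work lies.
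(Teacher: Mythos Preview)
Your plan is sound in principle but takes a genuinely different route from the paper.  The paper does \emph{not} run type calculus on the structure digraph at all for $n\ge 6$; instead it gives explicit pairing strategies at the level of positions.  For even $n\ge 6$ the second player forces $c$ to be selected within the first two moves, partitions the remaining $n-2$ rim vertices into adjacent pairs, mirrors the first player pair-for-pair, and finishes with the four-vertex ``usual endgame'' of Figure~\ref{fig:UE}.  For odd $n\ge 6$ the paper plays $\GEN(W_n)+*1$, pairs $c$ with the stone from $*1$, mirrors as before until eight elements remain, and then does an explicit case analysis (Cases~I, IIA, IIB1--IIB4) on the possible adjacency patterns of the five remaining rim vertices.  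The $n=5$ case is read off from the quotient in Figure~\ref{fig:DNGW5 and GENW5}, as you also propose.

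What each approach buys: the paper's strategy argument sidesteps entirely the enumeration of the intersection-set poset you correctly identify as the hard part---no induction over run configurations, no verification that stray options do not raise the $\mex$.  The price is the ad hoc case analysis for odd $n$, which is short but requires care.  Your type-calculus route is more uniform and would in principle compute the full simplified structure diagram, but the poset of run configurations on $C_{n-1}$ is genuinely two-dimensional and the bookkeeping you flag as the main obstacle is real.  One point to watch in your plan: the endgame pairing you want as a base lemma is a statement about positions, not structure classes, so feeding it into the type induction requires translating ``the responding player wins from this configuration'' into the pair $(\nim_0,\nim_1)$ for the corresponding class; this translation is routine but is exactly where the paper's direct strategy argument is cleaner.
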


\begin{proof}
The $n=5$ case is handled by Example~\ref{ex:W5}.

For $n\ge 6$, a common position in the game is when only four unchosen vertices remain in two pairs of neighboring noncentral vertices, as shown in Figure~\ref{fig:UE}. The second player can win using the pairing strategy shown with red arrows in the figure. We will refer to this as the \emph{usual endgame}.

Suppose that $n$ is even and $n\ge 6$. We show that the second player has a winning strategy. The second player can ensure that $c$ is chosen in the first two moves. The second player can partition the remaining $n-2$ vertices into adjacent pairs. 
The second player always selects the pair of the vertex chosen by the first player until four vertices---two pairs---remain. The second player now wins by the usual endgame.

Now suppose that $n$ is odd and $n\ge 6$. We will show that the second player has a winning strategy for $\GEN(W_n) + *1$. Let $s$ denote the only available move in $*1$.  Since $n$ is odd, $n-1$ is even and the second player can partition the $n-1$ vertices on the rim into adjacent pairs.  The second player will pair $c$ and $s$ to complete the pairing. The second player will choose the pair of the vertex that was immediately played by the first player until there are eight elements left, where an element is either a vertex or $s$.  Let $S$ be the set of remaining eight unselected elements.  We now do a case analysis for the remainder of the game.

{\sl Case I:}   The elements $c$ and $s$ are not in $S$.  Then the second player continues to take the pair of the previously played vertex until there are four unchosen vertices remaining, and then the second player wins by the usual endgame.

{\sl Case II:}  The elements $c$ and $s$ are in $S$. 

{\sl Case IIA:}  The first player chooses either $c$ or $s$ as the first choice from $S$.   Then the second player chooses whichever of $c$ or $s$ remains, continues to take the pair of the previously played vertex until there are four unchosen vertices remaining, and then wins by the usual endgame.

{\sl Case IIB:} If the first player chooses any vertex $z$ other than $c$ or $s$, then the second player will select $s$.   The remainder of the game depends on the adjacencies of the rim vertices, and we consider four cases.  These four cases listed below are shown in Figure~\ref{fig:WheelGENCases2Bret}. In the figure, when the first player selects a vertex at the tail of an arrow, the second player responds by selecting the vertex at the head of the same arrow. If the first player selects a white vertex, then the game ends after the reply of the second player. If the first player selects a gray vertex, then the game ends through the usual endgame.  

{\sl Case IIB1:}   The five rim vertices $\{\alpha_1, \alpha_2, \alpha_3, \alpha_4, \alpha_5\}$ form a path as in Figure~\ref{fig:WheelGENCases2Bret}(1), where $\alpha_1$ and $\alpha_5$ are not adjacent because $n>6$.  

\begin{itemize}
\item If the first player picks $\alpha_2$ or $\alpha_4$, then the second player picks the other to win. 

\item If the first player picks $\alpha_3$ or $c$, the second player picks the other to win by the usual endgame.

\item If the first player picks $\alpha_1$ or $\alpha_5$, the second player picks $c$ to win by the usual endgame.
\end{itemize}

We explain the strategy for the remaining three cases only via the diagram.

{\sl Case IIB2:} Four of the rim vertices $\{\alpha_1,\alpha_2,\alpha_3,\alpha_4\}$ form a path as in Figure~\ref{fig:WheelGENCases2Bret}(2), but the fifth rim vertex $\beta$ is not adjacent to any $\alpha_i$.  

{\sl Case IIB3:} Three of the rim vertices $\{\alpha_1,\alpha_2,\alpha_3\}$ form a path as in Figure~\ref{fig:WheelGENCases2Bret}(3), and the other two rim vertices $\{\beta_1,\beta_2\}$ form a second disjoint path.  

{\sl Case IIB4:} The five rim vertices form two disjoint paths $\{\alpha_1,\alpha_2\}$ and $\{\beta_1,\beta_2\}$ and a singleton $\{\gamma\}$, as in Figure~\ref{fig:WheelGENCases2Bret}(4).  
\end{proof}

\begin{figure}[h]
\hfil
\begin{tikzpicture}[scale=.65,inner sep=1mm]
\node[vert] (r4) at (2,0) {$\scriptstyle \alpha_5$};
\node[vert, fill=white] (r3) at (2.4,1.4) {$\scriptstyle \alpha_4$};
\node[vert] (r2) at (4,2) {$\scriptstyle \alpha_3$};
\node[vert, fill=white] (r1) at (5.6,1.4) {$\scriptstyle \alpha_2$};
\node[vert] (r0) at (6.0,0) {$\scriptstyle \alpha_1$};
\node[vert] (cr) at (4,0) {$\scriptstyle c$};
\node[circle,outer sep=2mm] (rr4) at (r4) {};
\node[circle,outer sep=2mm] (ccr) at (cr) {};
\node[circle,outer sep=2mm] (rr3) at (r3) {};
\node[circle,outer sep=1.7mm] (rr2) at (r2) {};
\node[circle,outer sep=2mm] (rr1) at (r1) {};
\node[circle,outer sep=2mm] (rr0) at (r0) {};
\path [b] (cr) -- (r0);
\path [b] (cr) -- (r1);
\path [b] (cr) -- (r2);
\path [b] (cr) -- (r3);
\path [b] (cr) -- (r4);
\path [b] (r0) -- (r1);
\path [b] (r2) -- (r1);
\path [b] (r2) -- (r3);
\path [b] (r4) -- (r3);
\draw[<->,red] (rr2) edge [bend right] (ccr);
\draw[<->,red] (rr1) edge (rr3);
\draw[->,red] (rr0) edge [bend left] (ccr);
\draw[->,red] (rr4) edge [bend right] (ccr);
\node at (4,-1.3) {(1)};
\end{tikzpicture}
\hfil
\begin{tikzpicture}[scale=.65,inner sep=1mm]
\node[vert] (r4) at (2,0) {$\scriptstyle \beta$};
\node[vert, fill=white] (r3) at (2.4,1.4) {$\scriptstyle \alpha_4$};
\node[vert, fill=white] (r2) at (4,2) {$\scriptstyle \alpha_3$};
\node[vert, fill=white] (r1) at (5.6,1.4) {$\scriptstyle \alpha_2$};
\node[vert, fill=white] (r0) at (6.0,0) {$\scriptstyle \alpha_1$};
\node[vert] (cr) at (4,0) {$\scriptstyle c$};
\node[circle,outer sep=2mm] (rr4) at (r4) {};
\node[circle,outer sep=2mm] (ccr) at (cr) {};
\node[circle,outer sep=2mm] (rr3) at (r3) {};
\node[circle,outer sep=2mm] (rr2) at (r2) {};
\node[circle,outer sep=2mm] (rr1) at (r1) {};
\node[circle,outer sep=2mm] (rr0) at (r0) {};
\path [b] (cr) -- (r0);
\path [b] (cr) -- (r1);
\path [b] (cr) -- (r2);
\path [b] (cr) -- (r3);
\path [b] (cr) -- (r4);
\path [b] (r0) -- (r1);
\path [b] (r2) -- (r1);
\path [b] (r2) -- (r3);
\draw[<->,red] (rr1) edge (rr3);
\draw[<->,red] (rr0) edge (rr2);
\draw[<->,red] (rr4) edge [bend right] (ccr);
\node at (4,-1.3) {(2)};
\end{tikzpicture}
\hfil
\begin{tikzpicture}[scale=.65,inner sep=1mm]
\node[vert, fill=white] (r4) at (2,0) {$\scriptstyle \beta_2$};
\node[vert, fill=white] (r3) at (2.4,1.4) {$\scriptstyle \beta_1$};
\node[vert] (r2) at (4,2) {$\scriptstyle \alpha_3$};
\node[vert, fill=white] (r1) at (5.6,1.4) {$\scriptstyle \alpha_2$};
\node[vert] (r0) at (6.0,0) {$\scriptstyle \alpha_1$};
\node[vert] (cr) at (4,0) {$\scriptstyle c$};
\node[circle,outer sep=2mm] (rr4) at (r4) {};
\node[circle,outer sep=2mm] (ccr) at (cr) {};
\node[circle,outer sep=2mm] (rr3) at (r3) {};
\node[circle,outer sep=1.8mm] (rr2) at (r2) {};
\node[circle,outer sep=2mm] (rr1) at (r1) {};
\node[circle,outer sep=2mm] (rr0) at (r0) {};
\path [b] (cr) -- (r0);
\path [b] (cr) -- (r1);
\path [b] (cr) -- (r2);
\path [b] (cr) -- (r3);
\path [b] (cr) -- (r4);
\path [b] (r0) -- (r1);
\path [b] (r2) -- (r1);
\path [b] (r4) -- (r3);
\draw[->,red] (rr2) edge [bend right] (ccr);
\draw[<->,red] (rr1) edge (rr3);
\draw[<->,red] (rr0) edge [bend left] (ccr);
\draw[->,red] (rr4) edge (rr1);
\node at (4,-1.3) {(3)};
\end{tikzpicture}
\hfil
\begin{tikzpicture}[scale=.65,inner sep=1mm]
\node[vert] (r4) at (180:2) {$\scriptstyle \gamma$};
\node[vert, fill=white] (r3) at (135:2) {$\scriptstyle \beta_2$};
\node[vert, fill=white] (r2) at (90:2) {$\scriptstyle \beta_1$};
\node[vert, fill=white] (r1) at (45:2) {$\scriptstyle \alpha_2$};
\node[vert, fill=white] (r0) at (0:2) {$\scriptstyle \alpha_1$};
\node[vert] (cr) at (0,0) {$\scriptstyle c$};
\node[circle,outer sep=2mm] (rr4) at (r4) {};
\node[circle,outer sep=2mm] (ccr) at (cr) {};
\node[circle,outer sep=2mm] (rr3) at (r3) {};
\node[circle,outer sep=2mm] (rr2) at (r2) {};
\node[circle,outer sep=2mm] (rr1) at (r1) {};
\node[circle,outer sep=2mm] (rr0) at (r0) {};
\path [b] (cr) -- (r0);
\path [b] (cr) -- (r1);
\path [b] (cr) -- (r2);
\path [b] (cr) -- (r3);
\path [b] (cr) -- (r4);
\path [b] (r0) -- (r1);
\path [b] (r2) -- (r3);
\draw[<->,red] (rr1) edge [bend right] (rr2);
\draw[<->,red] (rr0) edge (rr3);
\draw[<->,red] (rr4) edge [bend right] (ccr);
\node at (0,-1.3) {(4)};
\end{tikzpicture}
\hfil\phantom{}

\caption{\label{fig:WheelGENCases2Bret}Strategies in Case IIB.  Two vertices are adjacent if and only if they are connected by an edge.}
\end{figure}

\subsection{Generalized wheel graphs}
The generalized wheel graph $W_{m,n}$ with $m\ge 2$ and $n\ge 3$ is the join $\overline{K}_m+C_n$ with $m+n$ total vertices in $V(\overline{K}_m)=\{c_1, \ldots, c_m\}$ and $V(C_n)=\{v_1, \ldots, v_n\}$.
The geodetic closure and the convex hull operators are the same for $W_{m,3}$ and $W_{2,4}$, so the geodetic and convex games are the same. Otherwise, the geodetic closure and convex hull games are different because the minimal generating sets are different.

Since $W_{m,3}$ is a complete split graph, we immediately have the following consequence of Corollary~\ref{cor:split}.

\begin{prop}
For the generalized wheel graphs $W_{m,3}$, $\nim(\DNG(W_{m,3}))=\pty(m)$ and $\nim(\GEN(W_{m,3}))=1-\pty(m)$.
\end{prop}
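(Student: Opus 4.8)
The plan is to reduce the statement to Proposition~\ref{prop:split} by recognizing $W_{m,3}$ as a complete split graph. The first step is to observe that the $3$-cycle $C_3$ is literally the same graph as the complete graph $K_3$, so that
\[
W_{m,3}=\overline{K}_m+C_3=\overline{K}_m+K_3=K_3+\overline{K}_m.
\]
This exhibits $W_{m,3}$ as the complete split graph whose clique part is $K_3$ (on three vertices) and whose independent part is $\overline{K}_m$ (on $m$ vertices).

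Next I would verify that the hypotheses of Proposition~\ref{prop:split} are met. The defining condition $m\ge 2$ for the generalized wheel guarantees that the independent part $\overline{K}_m$ has at least two vertices, which is exactly the requirement ``$n\ge 2$'' in that proposition, applied with its two parameters taken to be $3$ and $m$, so that the total vertex count is $m+3$. Invoking Proposition~\ref{prop:split} then immediately yields
\[
\nim(\DNG(W_{m,3}))=1-\pty(m+3)
\qquad\text{and}\qquad
\nim(\GEN(W_{m,3}))=\pty(m+3).
\]

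The final step is elementary parity bookkeeping. Since adding an odd number flips parity, $\pty(m+3)=\pty(m+1)=1-\pty(m)$; substituting gives $\nim(\GEN(W_{m,3}))=1-\pty(m)$ and $\nim(\DNG(W_{m,3}))=1-(1-\pty(m))=\pty(m)$, as required. I do not anticipate a genuine obstacle here, since the entire content is packaged in Proposition~\ref{prop:split}. The only two points demanding care are the structural identification $C_3=K_3$ (which fails for larger cycles and is precisely why $W_{m,3}$ is special, while $W_{m,n}$ for $n\ge 4$ requires the separate treatment mentioned before the statement) and tracking the parity of the total vertex count $m+3$ rather than of $m$ itself.
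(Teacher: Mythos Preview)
Your proof is correct and matches the paper's own approach exactly: the paper simply notes that $W_{m,3}$ is a complete split graph and records the statement as an immediate consequence of Proposition~\ref{prop:split}, which is precisely the identification $C_3=K_3$ and parity computation you carried out.
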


\begin{prop}\label{prop:max-genwheel}
For $W_{m,n}$ with $n\ge 4$, 
\[
 \mathcal{N} = \{ \{c_i, v_j, v_{j+1}\} \mid i \in \{1,\ldots,m\}, j \in \{1,\ldots,n\})\}
\]
and $\Phi = \emptyset$.
\end{prop}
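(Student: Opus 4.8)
The plan is to first record two structural facts that follow immediately from the join structure of $W_{m,n}=\overline{K}_m+C_n$: (i) any two distinct hubs $c_i,c_k$ are nonadjacent and at distance $2$, and every rim vertex lies on the shortest path $c_iv_\ell c_k$, so that $[\{c_i,c_k\}]=V$; and (ii) any two nonadjacent rim vertices $v_a,v_b$ are at distance $2$ with every hub lying on the shortest path $v_ac_kv_b$, so that $\{c_1,\ldots,c_m\}\subseteq[\{v_a,v_b\}]$. Combining these, once the hull of a set contains two distinct hubs it contains all rim vertices by (i), and then, since $n\ge 4$ guarantees a pair of nonadjacent rim vertices, it contains all hubs by (ii); hence such a set generates. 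This principle that \emph{two distinct hubs force $V$} will drive the whole argument.

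Next I would verify that each candidate $\{c_i,v_j,v_{j+1}\}$ is nongenerating. The three pairs $c_iv_j$, $c_iv_{j+1}$, and $v_jv_{j+1}$ are all edges, so no geodesic between two of the three vertices passes through a fourth vertex; thus the set is geodetically closed, hence convex, and it is a proper subset of $V$ because $n\ge 4$. For maximality I add an arbitrary outside vertex $w$ and show the hull becomes $V$. If $w=c_k$ with $k\neq i$, two distinct hubs are present and we are done by the principle above. If $w=v_\ell$ with $\ell\notin\{j,j+1\}$, I argue that $v_\ell$ is nonadjacent to at least one of $v_j,v_{j+1}$: indeed $v_\ell$ can be adjacent to $v_j$ only when $\ell=j-1$ and to $v_{j+1}$ only when $\ell=j+2$, and since $n\ge 4$ the vertices $v_{j-1}$ and $v_{j+2}$ are each nonadjacent to the far endpoint. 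So fact (ii) places every hub into the hull, again yielding two hubs and hence $V$.

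For the reverse inclusion, let $N\in\mathcal{N}$. By the principle, $N$ cannot contain two distinct hubs, so it has at most one hub; and by (ii) it cannot contain two nonadjacent rim vertices, so its rim part is a clique of $C_n$. Since $n\ge 4$, the cycle $C_n$ is triangle-free, so a clique has at most two vertices and, if two, they are adjacent. Therefore $N\subseteq\{c_i,v_j,v_{j+1}\}$ for suitable indices (taking $c_i$ to be the hub of $N$ if present, and $v_j,v_{j+1}$ an adjacent pair covering the rim part of $N$). Since the right-hand set is nongenerating and $N$ is maximal, equality holds, showing that $\mathcal{N}$ is exactly the claimed family.

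Finally I would compute $\Phi=\bigcap\mathcal{N}$. No hub $c_i$ lies in $\Phi$ because, using $m\ge 2$, some set $\{c_k,v_1,v_2\}$ with $k\neq i$ omits it; no rim vertex $v_\ell$ lies in $\Phi$ because the set $\{c_1,v_{\ell+1},v_{\ell+2}\}$ omits it, where $n\ge 4$ ensures $\ell\notin\{\ell+1,\ell+2\}$ modulo $n$. Hence $\Phi=\emptyset$. I expect the only delicate point to be the index bookkeeping modulo $n$ in the maximality step, specifically verifying that an added rim vertex is always nonadjacent to one endpoint of the pair $\{v_j,v_{j+1}\}$, which is exactly where the hypothesis $n\ge 4$ is essential.
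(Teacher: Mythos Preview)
Your proposal is correct and follows essentially the same approach as the paper: both arguments hinge on the facts that two distinct hubs generate and that two nonadjacent rim vertices generate (forcing any $N\in\mathcal{N}$ to contain at most one hub and a rim clique of size at most two), and both conclude by containment in some $\{c_i,v_j,v_{j+1}\}$ plus maximality. Your version is simply more explicit, spelling out the convexity of the candidate triangles, the modular index check for the added rim vertex, and the computation of $\Phi$, whereas the paper compresses these into a few sentences.
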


\begin{proof}
The sets $\{c_k, c_\ell \}$, $\{v_i, v_j\}$ are generating for $k \not=\ell$ and nonadjacent $v_i, v_j$ such that $i \not=j$.  So $\{c_i, v_j, v_{j+1} \}$ is a maximal nongenerating set. To see the other containment, consider a maximal nongenerating set $N$. Then $N$ contains at most one central vertex. Since $n\ge4$, any three rim vertices contain two nonadjacent rim vertices. Hence $N$ must contain at most two rim vertices. Thus $N=\{c_i, v_j, v_{j+1}\}$ for some $i$ and $j$. The statement about the Frattini subset is now easy to verify.
\end{proof}

\begin{prop}
For generalized wheel graphs with $n \geq 4$, $\nim(\DNG(W_{m,n}))=1$.
\end{prop}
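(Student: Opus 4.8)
The plan is to read off the answer directly from the structure of $\mathcal{N}$ established in Proposition~\ref{prop:max-genwheel}, combined with the parity criterion of Proposition~\ref{prop:all-same}. The key observation is that every maximal nongenerating set of $W_{m,n}$ with $n\ge 4$ has the form $\{c_i, v_j, v_{j+1}\}$, and hence has cardinality exactly $3$.

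First I would invoke Proposition~\ref{prop:max-genwheel} to record that each $N\in\mathcal{N}$ satisfies $|N|=3$, so $\pty(N)=\pty(3)=1$ for every $N\in\mathcal{N}$. Since all terminal positions of the avoidance game $\DNG(W_{m,n})$ share the common parity $r=1$, Proposition~\ref{prop:all-same} applies directly and yields $\nim(\DNG(W_{m,n}))=1$.

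There is essentially no obstacle here: the entire content of the computation was already absorbed into the verification that $\mathcal{N}$ consists solely of the three-element sets $\{c_i,v_j,v_{j+1}\}$, which is the hard part of the preceding proposition. Once the uniform size-$3$ description of the maximal nongenerating sets is in hand, the nim-number follows from the general parity principle without any type-calculus or structure-diagram analysis. Thus the proof reduces to a single application of Propositions~\ref{prop:max-genwheel} and~\ref{prop:all-same}.

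\begin{proof}
By Proposition~\ref{prop:max-genwheel}, every set in $\mathcal{N}$ has the form $\{c_i,v_j,v_{j+1}\}$ and so has size $3$. Hence $\pty(N)=1$ for all $N\in\mathcal{N}$, and the result follows from Proposition~\ref{prop:all-same}.
\end{proof}
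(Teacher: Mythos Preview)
Your proof is correct and matches the paper's approach exactly: both invoke Proposition~\ref{prop:max-genwheel} to conclude every maximal nongenerating set has size $3$, then apply Proposition~\ref{prop:all-same} with $r=1$.
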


\begin{proof}
Each set in $\mathcal{N}$ has size $3$ by Proposition~\ref{prop:max-genwheel}, so the result follows from Proposition~\ref{prop:all-same}.
\end{proof}

\begin{prop}
For generalized wheel graphs with $n \geq 4$, $\nim(\GEN(W_{m,n}))=0$.
\end{prop}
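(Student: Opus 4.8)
The plan is to prove that the starting position $\emptyset$ of $\GEN(W_{m,n})$ is a second-player win by examining only the depth-one options, with no structure-diagram machinery required. First I would recall the two-element generating sets already identified in the proof of Proposition~\ref{prop:max-genwheel}: for distinct centers the pair $\{c_i,c_k\}$ is generating, and for nonadjacent rim vertices the pair $\{v_j,v_\ell\}$ is generating. Because $m\ge 2$ there exist at least two centers, and because $n\ge 4$ every rim vertex $v_j$ has at least one nonadjacent rim vertex on $C_n$. Hence every vertex $v\in V$ belongs to some two-element generating set: a center together with another center, or a rim vertex together with a nonadjacent rim vertex.

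Next I would use this to determine the nim-number of each singleton position. For a rim vertex $v_j$, the position $\{v_j\}$ has the generating option $\{v_j,v_\ell\}$ with $v_\ell$ nonadjacent to $v_j$; for a center $c_i$, the position $\{c_i\}$ has the generating option $\{c_i,c_k\}$ with $k\neq i$. In $\GEN$, a position that generates is terminal and therefore has nim-number $0$. Thus in each case $0\in\nim(\Opt(\{v\}))$, so $\nim(\{v\})=\mex(\nim(\Opt(\{v\})))\ge 1$ for every $v\in V$.

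Finally, the options of the starting position $\emptyset$ are exactly the singletons $\{v\}$ for $v\in V$. Since none of these singletons has nim-number $0$, I conclude that $\nim(\GEN(W_{m,n}))=\nim(\emptyset)=\mex\{\nim(\{v\})\mid v\in V\}=0$. Intuitively, this says that whoever moves second can always answer the first player's opening singleton with a vertex that completes a generating pair, thereby being the one to generate and winning.

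There is essentially no obstacle beyond verifying the two-element generating sets, and that verification is immediate from Proposition~\ref{prop:max-genwheel} and its proof; the hypotheses $m\ge 2$ and $n\ge 4$ are used precisely to guarantee a second center and a nonadjacent rim vertex, respectively. The only point requiring a little care is the bookkeeping that a generating move produces a \emph{terminal} position with nim-number $0$, which is exactly what forces each singleton to have nonzero nim-number and hence forces $\nim(\emptyset)=0$.
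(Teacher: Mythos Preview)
Your argument is correct and is essentially the same as the paper's: both observe that every vertex lies in a two-element generating set (two distinct centers, or two nonadjacent rim vertices), so the second player can always complete a generating pair on move two. The paper states this as a one-line strategy argument, while you unpack it into the equivalent nim-number computation that each singleton has nim-number at least $1$ and hence $\mex$ at the root is $0$.
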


\begin{proof}
Every vertex is part of a minimal generating set of the form $\{c_i, c_j\}$ or $\{v_i, v_{i+2}\}$, so the second player wins by selecting a vertex that forms a minimal generating set with the first player's selection. 
\end{proof}

%---------------%
\section{Closing Remarks}
%---------------%

We summarize our results for graph families in Table~\ref{bigTable}.  Recall that trees, complete graphs, and windmill graphs are examples of block graphs.

\begin{table}[h]
\renewcommand{\arraystretch}{1.2}
\begin{tabular}{@{}c|c|c|c@{}}
\noalign{\hrule height 2pt}
$G$ & restriction & $\nim(\DNG(G))$ & $\nim(\GEN(G))$\\
\noalign{\hrule height 1pt}
$K_{m}+\overline{K}_{n}$ & $m\ge1$, $n\ge2$ & $1$ & $0$\\
\noalign{\hrule height 1pt}
$H\circ K_{1}$ & $H$ nontrivial & $1-\pty(V)$ & $\pty(V)$\\
\noalign{\hrule height 1pt}
block &  & $1-\pty(V)$ & $\pty(V)$\\
\noalign{\hrule height 1pt}
\multirow{2}{*}{$C_{n}$} & $3\le n\equiv_{4}1,2$ & $1$ & \multirow{2}{*}{$\pty(V)$}\\
\cline{2-3} %\cline{3-3}
 & $3\le n\equiv_{4}0,3$ & $0$ & \\
\noalign{\hrule height 1pt}
$Q_{n}$ & $n\ge2$ & $0$ & $0$\\
\noalign{\hrule height 1pt}
\multirow{2}{*}{$P_{m}\boxprod P_{n}$} & $2=m\le n$ & $0$ & \multirow{2}{*}{$\pty(V)$}\\
\cline{2-3} %\cline{3-3}
 & $3\le m\le n$ & $2\pty(m+n)$ & \\
\noalign{\hrule height 1pt}
\multirow{3}{*}{$K_{m_{1},\ldots,m_{k}}$} & $\lambda\in\{0,1\}$ & $1-\pty(V)$ & $\pty(V)$\\
\cline{2-4} %\cline{3-4} \cline{4-4}
 & $2\le\lambda\equiv_{2}0$ & \multirow{2}{*}{$\pty(\lambda+\sigma)$} & $2\pty(\sigma)$\\
\cline{2-2} \cline{4-4}
 & $2\le\lambda\equiv_{2}1$ &  & $\pty(\sigma)$\\
\noalign{\hrule height 1pt}
\multirow{4}{*}{$W_{m,n}$} & $m=1$, $n=4$ & \multirow{3}{*}{$1-\pty(V)$} & 2\\
\cline{2-2} \cline{4-4}
 & $m=1$, $n\ge4$ &  & \multirow{2}{*}{$\pty(V)$}\\
\cline{2-2}
 & $m\ge2$, $n=3$ &  & \\
\cline{2-4} %\cline{3-4} \cline{4-4}
 & $m\ge2$, $n\ge4$ & $1$ & $0$\\
\noalign{\hrule height 1.5pt}
\end{tabular}
\vspace{5mm}

\caption{
\label{bigTable} 
Summary of nim-values for graph families.  
}
\end{table}

We propose the following open problems.

\begin{figure}[h]
\begin{tikzpicture}[yscale=.8,xscale=1.1]
\node[vert] (8) at (0,0) {\scriptsize$v_8$};
\node[vert] (3) at (0,3) {\scriptsize$v_3$};
\node[vert] (5) at (.55,1) {\scriptsize$v_5$};
\node[vert] (7) at (1,2) {\scriptsize$v_7$};
\node[vert] (2) at (3,1.5) {\scriptsize$v_2$};
\node[vert,fill=green] (9) at (5,1.5) {\scriptsize$v_9$};
\node[vert] (4) at (6,0) {\scriptsize$v_4$};
\node[vert] (6) at (3,3) {\scriptsize$v_6$};
\node[vert] (1) at (6,3) {\scriptsize$v_1$};
\path [b] (8) to (3) to (7) to (5) to (8);
\path [b] (7) to (2) to (5);
\path [b] (2) to (8) to (9);
\path [b] (9) to (4) to (8);
\path [b] (2) to (9) to (3) to (6) to (9);
\path [b] (6) to (1) to (9);
\path [b] (1) to (4);
\end{tikzpicture}

\caption{
\label{Fig:nim7}
A planar graph $G$ with $\nim(\DNG(G))=7$.}
\end{figure}

\begin{enumerate}
\item
Is it possible to generalize our results about grid graphs to all lattice graphs?

\item The graph in Example~\ref{ex:4-pan} is a pseudotree, a connected graph that has at most one cycle. Is it possible to characterize the nim-values of pseudotrees?

\item
From computer calculations, we have examples of graphs with corresponding nim-values up to $7$ for both games. 
Figure~\ref{Fig:nim7} shows an example of a graph $G$ with $\nim(\DNG(G))=7$.
We conjecture that the spectrum of nim-values for both building games is the set of nonnegative integers. Is this true even for the family of diameter 2 graphs?
\item 
Based on computer calculations, the spectrum of nim-values for both building games seems to be finite for outerplanar graphs and geodetic graphs, where a graph is \emph{geodetic} if it is connected and there is a unique shortest
path between any two vertices. What exactly are these spectrums?
% $\{0,1,2,3\}$ for DNG Not high confidence.
% $\{0,1,2\}$ for GEN Not high confidence.
% Spectrum of $\DNG$ and $\GEN$ for geodetic graphs seems to be $\{0,1,2,3\}$. Spectrum of $\DNG$ and $\GEN$ for diameter 2 geodetic graphs seems to be $\{0,1\}$ and $\{0,1,2\}$, respectively. 
\item Is there a relationship between the Frattini subset and eccentricity approximating spanning trees~\cite{DRAGAN2017142}?
\item 
Is it possible to characterize the Frattini subsets for some graph families like the geodetic graphs?
%Is it related to cut sets? Is it the set of cut vertices for geodetic graphs?
\item 
Is it possible to classify connected graphs with empty Frattini subset?
\item 
Can we say anything about the nim-value of games played on random graphs?  
%possible that for most graphs the nim-value is the same, perhaps 0.
\item 
In the destroying games of~\cite{HararyConnect,SiebenHypergraph}, the players remove elements until the remaining elements no longer generate. What are the nim-values of these destroying convex hull games on graphs?
%\item Can we obtain analogous results for the corresponding destroying games as in~\cite{HararyConnect, SiebenHypergraph}?
\item  Computer calculations suggest that if $\nim(\DNG(G))=0=\nim(\DNG(H))$ and $\pty(V(G))=1=\pty(V(H))$, then $\nim(\DNG(G \cupdot H))=1$. Is this analogue of Proposition~\ref{prop:sum-result} true?
\end{enumerate}

%\newold{}{
%\section{Various things, most likely won't make it into the paper}

% Maybe add this to the generic results section.

%The next conjecture is mainly a guess based on data.

%\begin{conjecture}
%If $\nim(\DNG(G))=0=\nim(\DNG(H))$ and $\pty(G)=1=\pty(H)$,
%then $\nim(\DNG(G+ H))=1$.
%\end{conjecture}

%\begin{proof}
%The second player can win $\nim(\DNG(G+ H))+*1$. 

%Not clear what to do if the first player picks the stone early. The second player need to pick a vertex that does not hurt the winning prospects. It's not clear if this exists.

%Not sure about this any more: The result is not true for hypergraph games. For example if $V=\{1,2,3\}$ and $\mathcal{N}=\{\{1,2\},\{1,3\},\{2,3\}\}$, then $\nim(\DNG(G))=0=?=\nim(\DNG(G+G))$. Perhaps this kind of $\mathcal{N}$ is not possible for convex hulls. This happens for the complement of $K_3$.
%\end{proof}
%}

%---------------%
\section*{Acknowledgements}
%---------------%

This material is based upon work supported by the National Science Foundation under Grant No.~DMS-1929284 while the authors were in residence at the Institute for Computational and Experimental Research in Mathematics in Providence, RI, via the Collaborate@ICERM program.

%---------------%
\bibliographystyle{plain}
\bibliography{game}

\begin{thebibliography}{10}

\bibitem{albert2007lessons}
Michael Albert, Richard Nowakowski, and David Wolfe.
\newblock {\em Lessons in {P}lay: {A}n {I}ntroduction to {C}ombinatorial {G}ame
  {T}heory}.
\newblock CRC Press, 2007.

\bibitem{anderson.harary:achievement}
Marlo Anderson and Frank Harary.
\newblock Achievement and avoidance games for generating abelian groups.
\newblock {\em Internat. J. Game Theory}, 16(4):321--325, 1987.

\bibitem{BeneshErnstSiebenSymAlt}
Bret~J. Benesh, Dana~C. Ernst, and N\'{a}ndor Sieben.
\newblock Impartial avoidance and achievement games for generating symmetric
  and alternating groups.
\newblock {\em Int. Electron. J. Algebra}, 20:70--85, 2016.

\bibitem{BeneshErnstSiebenDNG}
Bret~J. Benesh, Dana~C. Ernst, and N\'{a}ndor Sieben.
\newblock Impartial avoidance games for generating finite groups.
\newblock {\em North-West. Eur. J. Math.}, 2:83--103, 2016.

\bibitem{BeneshErnstSiebenGeneralizedDihedral}
Bret~J. Benesh, Dana~C. Ernst, and N\'{a}ndor Sieben.
\newblock Impartial achievement games for generating generalized dihedral
  groups.
\newblock {\em Australas. J. Combin.}, 68:371--384, 2017.

\bibitem{BeneshErnstSiebenGENSpectrum}
Bret~J. Benesh, Dana~C. Ernst, and N\'{a}ndor Sieben.
\newblock The spectrum of nim-values for achievement games for generating
  finite groups.
\newblock {\em Integers}, 2023.
\newblock To appear.

\bibitem{brandenburg:algebraicGames}
Martin Brandenburg.
\newblock Algebraic games playing with groups and rings.
\newblock {\em Internat. J. of Game Theory}, pages 1--34, 2017.

\bibitem{BuckleyHarary85}
Fred {Buckley} and Frank {Harary}.
\newblock {Closed geodetic games for graphs}.
\newblock {Combinatorics, graph theory and computing, Proc. 16th Southeast.
  Conf., Boca Raton/Fla. 1985, Congr. Numerantium 47, 131--138}, 1985.

\bibitem{BuckleyHarary86}
Fred {Buckley} and Frank {Harary}.
\newblock {Geodetic games for graphs}.
\newblock {\em {Quaest. Math.}}, 8:321--334, 1986.

\bibitem{DRAGAN2017142}
Feodor~F. Dragan, Ekkehard K\"ohler, and Hend Alrasheed.
\newblock Eccentricity approximating trees.
\newblock {\em Discrete Appl. Math.}, 232:142--156, 2017.

\bibitem{ErnstSieben}
Dana~C. Ernst and N\'{a}ndor Sieben.
\newblock Impartial achievement and avoidance games for generating finite
  groups.
\newblock {\em Internat. J. Game Theory}, 47(2):509--542, 2018.

\bibitem{FraenkelHarary89}
Aviezri {Fraenkel} and Frank {Harary}.
\newblock {Geodetic contraction games on graphs}.
\newblock {\em {Int. J. Game Theory}}, 18(3):327--338, 1989.

\bibitem{HARARY1984323}
Frank Harary.
\newblock Convexity in graphs: Achievement and avoidance games.
\newblock In M.~Rosenfeld and J.~Zaks, editors, {\em Annals of Discrete
  Mathematics (20): Convexity and Graph Theory}, volume~87 of {\em
  North-Holland Mathematics Studies}, page 323. North-Holland, 1984.

\bibitem{HararyConnect}
Frank Harary and Robert~W. Robinson.
\newblock Connect-it games.
\newblock {\em Coll. Math. J.}, 15(5):411--419, 1984.

\bibitem{HaynesHenningTiller}
Teresa~W. {Haynes}, Michael~A. {Henning}, and Charlotte {Tiller}.
\newblock {Geodetic achievement and avoidance games for graphs}.
\newblock {\em {Quaest. Math.}}, 26(4):389--397, 2003.

\bibitem{MccoySieben}
Stephanie {McCoy} and N\'andor {Sieben}.
\newblock {Impartial achievement games on convex geometries}.
\newblock {\em {Comput. Geom.}}, 98:15, 2021.
\newblock Id/No 101786.

\bibitem{Necascova}
Milena {Ne\v{c}\'askov\'a}.
\newblock {A note on the achievement geodetic games}.
\newblock {\em {Quaest. Math.}}, 12(1):115--119, 1989.

\bibitem{PelayoBook}
Ignacio~M. Pelayo.
\newblock {\em Geodesic {C}onvexity in {G}raphs}.
\newblock SpringerBriefs Math. New York, NY: Springer, 2013.

\bibitem{Schmidt}
J\"{u}rgen Schmidt.
\newblock Einige grundlegende {B}egriffe und {S}\"{a}tze aus der {T}heorie der
  {H}\"{u}llenoperatoren.
\newblock In {\em Bericht \"{u}ber die {M}athematiker-{T}agung in {B}erlin,
  {J}anuar, 1953}, pages 21--48. Deutscher Verlag der Wissenschaften, Berlin,
  1953.

\bibitem{SiebenHypergraph}
N{\'a}ndor Sieben.
\newblock Impartial hypergraph games.
\newblock {\em Electronic Journal of Combinatorics}, 30(2):P2.13, 1--36, 2023.

\bibitem{SiegelBook}
Aaron~N. Siegel.
\newblock {\em Combinatorial {G}ame {T}heory}, volume 146 of {\em Graduate
  Studies in Mathematics}.
\newblock American Mathematical Society, Providence, RI, 2013.

\bibitem{Wang17}
Yue-Li {Wang}.
\newblock {Geodetic contraction games on trees}.
\newblock In {\em Frontiers in algorithmics. 11th international workshop, FAW
  2017, Chengdu, China, June 23--25, 2017. Proceedings}, pages 233--240. Cham:
  Springer, 2017.

\end{thebibliography}
%---------------%

\end{document}